\newtheorem{thm}{Theorem}[section]
\newtheorem{claim}{Claim}
\newtheorem{lemma}[thm]{Lemma}
\theoremstyle{definition}
\newtheorem{defn}[thm]{Definition}
\newtheorem{const}{Construction}
\def\F{\mathcal{F}}
\title{Positive co-degree thresholds for spanning structures}
\author{Anastasia Halfpap
\footnote{Department of Mathematics, Iowa State University, Ames, IA, USA.  E-mail \texttt{ahalfpap@iastate.edu}.}
\and 
Van Magnan
\footnote{University of Vermont, Burlington, VT, USA. E-mail \texttt{vmagnan@uvm.edu}.}
}
\begin{document}

\maketitle

\begin{abstract}
The \textit{minimum positive co-degree} of a non-empty $r$-graph $H$, denoted $\delta_{r-1}^+(H)$, is the largest integer $k$ such that if a set $S \subset V(H)$ of size $r-1$ is contained in at least one $r$-edge of $H$, then $S$ is contained in at least $k$ $r$-edges of $H$. Motivated by several recent papers which study minimum positive co-degree as a reasonable notion of minimum degree in $r$-graphs, we consider bounds of $\delta_{r-1}^+(H)$ which will guarantee the existence of various spanning subgraphs in $H$. We precisely determine the minimum positive co-degree threshold for Berge Hamiltonian cycles in $r$-graphs, and asymptotically determine the minimum positive co-degree threshold for loose Hamiltonian cycles in $3$-graphs. For all $r$, we also determine up to an additive constant the minimum positive co-degree threshold for perfect matchings.

\end{abstract}

\section{Introduction}

We consider $r$-uniform hypergraphs i.e., structures $H = (V,E)$, where $V, E$ are sets such that $E \subseteq \binom{V}{r}$, where $\binom{V}{r}$ denotes the set of all $r$-element subsets of $V$. We shall often call a $r$-uniform hypergraph $H$ an \textit{r-graph} for short, and shall call $e \in E(H)$ an \textit{edge} of $H$, or an \textit{r-edge} of $H$ if the uniformity of $H$ may be ambiguous. We often denote an $r$-edge by the concatenation of its vertices, e.g., for a $3$-graph $H$ with $\{v_1,v_2,v_3\} \in V(H)$, we write $v_1v_2v_3$ (or $v_2v_1v_3$, etc.) for $\{v_1, v_2, v_3\}\in V(H)$.

For a $2$-graph $G$, the minimum degree $\delta(G)$ is a well-defined, important, and extensively studied parameter. Minimum degree problems for $r$-graphs are thus naturally motivated. However, when $r \geq 3$ there are multiple interpretations of ``degree''. A natural definition is based on \textit{vertex} degree: in an $r$-graph $H$, we define \[d_1(v) = |\{e : v \in e \in E(H)\}|\text{, and } \delta_1(H) = \underset{v \in V}{\min} \hspace{0.1 cm} d_1(v).\] 
However, other natural generalizations exist. In an $r$-graph, the {\it co-degree} of an $(r-1)$-set $S$ is the number of edges containing $S$. The minimum co-degree over all $(r-1)$-sets in $H$ is denoted $\delta_{r-1}(H)$. Note that when $r = 2$, both vertex degree and co-degree indeed return the standard notion of degree for $2$-graphs. However, in general, the two parameters behave quite differently. To these two definitions of degree, we add a third, more recent, parameter:

\begin{defn}
Given an $r$-graph $H$, the \textit{minimum positive co-degree} of $H=(V,E)$, denoted $\delta_{r-1}^+(H)$, is the largest integer $k$ such that, for any $S \in \binom{V}{r-1}$, either $d_{r-1}(S) = 0$ or $d_{r-1}(S) \geq k$. If $H$ is empty, we define $\delta_{r-1}^+(H) = 0$.

\end{defn}

We remark that this parameter is also referred to as the \textit{shadow degree} of a hypergraph in the literature by, for instance, Frankl and Wang in \cite{frankl2024intersectingfamilieslargeshadow}. Note that this introduces a slight abuse of notation for $d_1(v)$, as $v$ is not a set. More generally, if $S$ is a fixed set, we may slightly abuse notation by dropping set notation, e.g., we write $d_2(x,y)$ rather than $d_2(\{x,y\})$. Minimum positive co-degree is not quite a generalization of minimum degree: many graphs $G$ have $\delta(G) = 0$, but we always have $\delta_1^+(G) \geq 1$ unless $G$ is empty. However, $\delta(G) = \delta_1^+(G)$ as long as $G$ has no isolated vertices, which is not a particularly restrictive assumption to add. Moreover, in some ways, minimum positive co-degree seems to better mimic minimum degree than the ordinary minimum co-degree parameter. One advantage of minimum positive co-degree comes from the differing notions of independence in $r$-graphs. 

\begin{defn}
Given an $r$-graph $H$, a set $S \subset V(H)$ is \textit{independent} if no $r$-edge of $H$ is contained within $S$. We call $S$ \textit{strongly independent} if no $r$-edge of $H$ intersects $S$ in more than one element. 
\end{defn}
Note that for $r = 2$, there is no distinction between the notions of independence and strong independence, and the presence of a large independent set does not preclude a large minimum degree. However, for $r > 3$, the two types of independence interact very differently with ordinary minimum co-degree. An $r$-graph may contain large independent sets and maintain a large minimum co-degree; however, if $H$ is an $r$-graph with $r \geq 3$, and $H$ contains a strongly independent set $S$ with $|S| > 1$, then $\delta_{r-1}(H) = 0$. On the other hand, $r$-graphs with large strongly independent sets may have large minimum \textit{positive} co-degree. This behavior is particularly relevant because many natural hypergraph constructions (which often arise as analogs of well-known $2$-graph constructions) have large strongly independent sets; the adoption of minimum positive co-degree, as opposed to ordinary minimum co-degree, allows us to consider these natural constructions as potential extremal examples.

Balogh, Palmer, and Lemons~\cite{BLPcodegree} first considered minimum positive co-degree as a reasonable hypergraph analog of minimum degree, motivated both by previous minimum co-degree problems and by degree versions of the Erd\H{o}s-Ko-Rado Theorem.  Recently, Halfpap, Lemons, and Palmer~\cite{pcddensity} introduced the \textit{positive co-degree Tur\'an number}, denoted $\mathrm{co^+ex}(n, F)$, defined as the largest possible minimum positive co-degree in an $n$-vertex $r$-graph which does not contain $F$ a subhypergraph. 
The aim of this paper is to continue the exploration of minimum positive co-degree by studying minimum positive co-degree conditions which guarantee the existence of certain spanning structures, i.e., subgraphs on the same vertex set as the host graph. In particular, we examine the extent to which these results mimic minimum degree bounds for $2$-graphs.

A classical question in graph theory is to characterize minimum degree conditions under which graphs contain certain spanning structures as subgraphs. For instance, in an $n$-vertex graph $G$, a \textit{Hamiltonian cycle} is a cycle on $n$ vertices. A well-known theorem of Dirac gives a simple sufficient (though not necessary) degree condition for the existence of Hamiltonian cycles.

\begin{thm}[Dirac~\cite{dirac}]
Let $G$ be an $n$-vertex graph, with $n \geq 3$, and suppose $\delta(G) \geq \frac{n}{2}$. Then $G$ contains a Hamiltonian cycle. 

\end{thm}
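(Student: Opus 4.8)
The plan is to run the classical ``longest path'' argument. First I would check that the hypothesis forces $G$ to be connected: if $G$ were disconnected, some component $C$ would satisfy $|C| \le n/2$, and then any vertex of $C$ would have degree at most $|C| - 1 < n/2$, contradicting $\delta(G) \ge n/2$. (Note also that $\delta(G) \ge n/2 \ge 2$, so $G$ certainly has an edge and in fact contains paths of length at least $2$.)

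Next I would fix a longest path $P = v_0 v_1 \cdots v_k$ in $G$. By maximality of $P$, every neighbor of $v_0$ and every neighbor of $v_k$ lies on $P$, since otherwise $P$ could be extended at that end. The key step is to turn $P$ into a cycle on the vertex set $V(P)$. For this I would consider the two index sets
\[
S = \{\, i \in \{1, \dots, k\} : v_0 v_i \in E(G)\,\}, \qquad T = \{\, i \in \{1, \dots, k\} : v_{i-1} v_k \in E(G)\,\}.
\]
Then $|S| = d(v_0) \ge n/2$ and $|T| = d(v_k) \ge n/2$, while $S, T \subseteq \{1, \dots, k\}$ with $k \le n - 1$. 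Hence $|S| + |T| \ge n > k$, so $S \cap T \neq \varnothing$; choose $i$ in the intersection. Then $v_0 v_i \in E(G)$ and $v_{i-1} v_k \in E(G)$, so
\[
C \;=\; v_0\, v_1 \cdots v_{i-1}\, v_k\, v_{k-1} \cdots v_i\, v_0
\]
is a cycle whose vertex set is exactly $V(P) = \{v_0, \dots, v_k\}$.

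Finally I would show $C$ is spanning, i.e. $k = n - 1$. If not, then since $G$ is connected there is a vertex $w \notin V(C)$ adjacent to some $v_j \in V(C)$; deleting one of the two edges of $C$ incident to $v_j$ and appending the edge $w v_j$ produces a path on $k+2$ vertices, contradicting the maximality of $P$. Hence $V(C) = V(G)$, and $C$ is the desired Hamiltonian cycle (using $n \ge 3$ to guarantee $C$ is a genuine cycle). I expect the only subtle point to be the pigeonhole step: the ``shift'' must be built into the definition of $T$ so that a common index $i \in S \cap T$ yields precisely the crossing pair of chords $v_0 v_i$ and $v_{i-1} v_k$ that reroutes $P$ into a cycle on the same vertex set; the rest is bookkeeping.
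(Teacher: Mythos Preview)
Your argument is the standard longest-path proof of Dirac's theorem and is correct as written; the shift in the definition of $T$ is handled properly, and the connectivity step at the end is clean. Note, however, that the paper does not give its own proof of this statement: Dirac's theorem is quoted as a classical result with a citation and is used only as the base case $r=2$ in the inductive proof of the Berge Hamiltonian cycle theorem. So there is no ``paper's proof'' to compare against here.
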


Observe that the complete bipartite graph $K_{n/2 + 1, n/2 -1}$ has minimum degree $\frac{n}{2} - 1$ and does not contain a Hamiltonian cycle. Thus, we may say that $\frac{n}{2}$ is the \textit{degree threshold} for Hamiltonian cycles in graphs. Similarly, Dirac's Theorem implies that for $n$ even, any $n$-vertex graph $G$ with $\delta(G) \geq \frac{n}{2}$ contains a perfect matching. Again, a slightly unbalanced complete bipartite graph does not contain a perfect matching, so $\frac{n}{2}$ is also the degree threshold for perfect matchings in graphs.

Now, given an $r$-graph, we may analogously consider the degree threshold for Hamiltonian cycles. This will depend not only on our generalization of minimum degree, but also on our notion of hypergraph cycles. We make two standard definitions; always, for a $k$-vertex cycle, we understand subscripts to be taken modulo $k$.

\begin{defn}
In an $r$-graph $H$, a \textit{Berge cycle} of length $k$ is an ordered list of $k$ pairwise distinct vertices and $r$-edges from $H$
\[ (v_1,e_1,v_2,e_2, \dots, e_{k-1}, v_k, e_k, v_1)\]
such that for each $i \in \{1,2, \dots, k\}$, $v_i$ and $v_{i+1}$ are contained in $e_i$. 
\end{defn}

\begin{defn}

An $r$-uniform \textit{loose cycle} of length $k$ is an $r$-graph on $k(r-1)$ vertices (for some integer $k \geq 2$), say with vertex set $\{v_1, v_2, \dots , v_{k(r-1)}\}$, whose edges are those of the form
\[v_{i(r-1) + 1}v_{i(r-1) + 2}\dots v_{i(r-1) + r} \text{ for } 0 \leq i \leq k-1. \]

\end{defn}
\noindent Other notions of hypergraph cycles (e.g., tight cycles, $\ell$-cycles, $t$-tight Berge cycles; see \cite{Rödl2010_Book} for general discussion) exist, but we shall not consider them here. Note that depending upon our chosen definition of cycle, it may only be possible to find a Hamiltonian cycle in $H$ given certain divisibility criteria on $|V(H)|$. 


Various authors have considered vertex degree and co-degree thresholds for Hamiltonian cycles in $3$-graphs. Here, we give results on the exact co-degree and degree thresholds:

\begin{thm}[Czygrinow-Molla \cite{czygrinow2013tight}]
    Let $H$ be a $3$-graph on $n$ vertices, where $n$ is even and sufficiently large. If $\delta_2(H)\geq n/4$, then $H$ has a loose Hamiltonian cycle
\end{thm}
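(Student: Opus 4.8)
The plan is to use the absorption method, combined with a stability-type case split that is forced by the tightness of the bound. First observe that a loose Hamiltonian cycle on $n$ vertices uses exactly $n/2$ edges; with the natural labeling, the $n/2$ ``link'' vertices each lie in two of these edges and the $n/2$ ``middle'' vertices each lie in one. Hence if every edge of $H$ meets a fixed set $A$, then $A$ covers at most $2|A|$ of the $n/2$ cycle edges, so the existence of a loose Hamiltonian cycle forces $|A|\geq n/4$. This is exactly the obstruction witnessing that $n/4$ is best possible, and it is also why a single clean argument cannot work: $3$-graphs in which almost all edges meet a set of size roughly $n/4$ must be treated on their own. So I would fix a small $\eps>0$ and split into the \emph{non-extremal case}, in which no set $A$ with $|A|\leq(1/4+\eps)n$ meets all but an $\eps$-fraction of the edges, and the \emph{near-extremal case}, in which such a set exists.

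In the non-extremal case I would run the standard absorption scheme. The absorbing gadget for a pair of vertices $\{u,v\}$ is an edge $xyz\in E(H)$ with the additional property that $xuy,\,yvz\in E(H)$: inside a loose path, the segment realized by the single edge $xyz$ (with $y$ as its middle vertex) can be rerouted as $x\,u\,y,\ y\,v\,z$, thereby swallowing $u$ and $v$ while preserving the endpoints $x,z$. Using that every pair has codegree at least $n/4$ — and that in the non-extremal regime the link graphs are sufficiently spread out that $N_H(u,y)$ and $N_H(v,y)$ overlap substantially inside the link graph of $y$ — one shows every pair $\{u,v\}$ has $\Omega(n^3)$ gadgets. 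A random-greedy selection then produces a family of $\Theta(n)$ pairwise vertex-disjoint gadgets that together absorb any even-sized set of at most $\beta n$ vertices; chaining them into a loose path yields an absorbing loose path $P_{\mathrm{abs}}$ on at most $\gamma n$ vertices. A connecting lemma, also driven by the codegree condition, then builds inside $H-V(P_{\mathrm{abs}})$ (with a reserved reservoir) a single loose path covering all but at most $\beta n$ vertices; attaching it to the two ends of $P_{\mathrm{abs}}$ closes a loose cycle, and the leftover set — of even size — is absorbed into $P_{\mathrm{abs}}$.

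The connecting lemma is a short calculation: given two disjoint pairs of vertices and a reservoir $R$ with $|R|=\Omega(n)$, the codegree condition guarantees polynomially many short loose paths through $R$ joining them, so connections can be made one at a time without exhausting $R$. Building the near-spanning loose path reduces to repeatedly extending a loose path by a fresh edge $w\,m\,w'$ at its current endpoint $w$ — possible because every vertex lies in $\Omega(n^2)$ edges, almost all of which avoid any fixed sublinear set — and then merging a bounded number of such paths via the connecting lemma.

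The main obstacle is the near-extremal case, i.e.\ proving the theorem when $H$ is close to the covering construction. Here the absorption and connecting machinery breaks down, since it genuinely needs $\eps$-slack, so one must instead construct the loose Hamiltonian cycle essentially by hand: with $A$ the small set meeting almost all edges, the cycle must be arranged so that its link vertices alternate between $A$ and $V\setminus A$ and its middle vertices avoid $A$, and one must check that $\delta_2(H)\geq n/4$ (rather than $n/4-1$) provides exactly the room needed to route every edge — carefully accounting for the few exceptional edges not meeting $A$, for vertices of $A$ with few neighbors in $V\setminus A$, and for the parity/divisibility of $|A|$ relative to $n/4$. Pinning down ``close to extremal'' quantitatively, and showing that the threshold jumps by exactly one between the extremal non-example and the graphs satisfying the hypothesis, is where essentially all of the technical work lies.
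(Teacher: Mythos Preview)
The paper does not prove this statement: it is quoted as a known result of Czygrinow and Molla and is included only as background motivation for the paper's own positive co-degree theorems. There is therefore no proof in the paper to compare your proposal against.

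That said, your outline is a reasonable high-level description of how the Czygrinow--Molla argument actually proceeds in the original reference: an absorption/reservoir/path-cover scheme in the non-extremal case, together with a separate structural analysis when $H$ is close to the extremal covering construction. You have correctly identified that the exact threshold $n/4$ forces a stability split and that the extremal case is where the real work lies. If you were asked to reconstruct the proof in full, the extremal-case section would need to be substantially fleshed out --- your description there is essentially a statement of what must be shown rather than how to show it --- but as a plan it is accurate.
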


\begin{thm}[Han \cite{handissertation}]
    Let $H$ be a $3$-graph on $n$ vertices, where $n$ is even and sufficiently large. If $\delta_1(H)\geq \frac{7}{16}\binom{n}{2} $, then $H$ has a loose Hamiltonian cycle
\end{thm}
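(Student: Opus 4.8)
The plan is to prove this by the absorption method of R\"odl, Ruci\'nski and Szemer\'edi, which reduces the problem to three ingredients: a connecting/reservoir lemma, an absorbing lemma, and an almost-spanning loose-path-cover lemma. The constant $\tfrac{7}{16}$ enters only in the last of these; the other two hold well below it. I would establish the statement first with the slightly relaxed hypothesis $\delta_1(H)\ge(\tfrac{7}{16}+\gamma)\binom n2$ for a fixed small $\gamma>0$, and then note that recovering the exact constant requires an extra stability step: any $H$ with $\delta_1(H)$ in the narrow window $[\tfrac7{16}\binom n2,(\tfrac7{16}+\gamma)\binom n2)$ must be structurally close to the extremal construction described below, and such near-extremal hypergraphs are handled directly. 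Throughout, recall that a loose path on $2t+1$ vertices has $t$ edges and a loose Hamilton cycle on $n=2k$ vertices is a loose path on $2k-1$ vertices closed through one further vertex; in particular parity forbids absorbing a single vertex into a loose path, so absorption will be carried out on pairs of vertices.

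First I would fix a random reservoir $R\subseteq V(H)$ of size $\rho n$ with $\rho$ small, and record the elementary fact that the degree hypothesis forces the shadow $N(v)=\{u:\{u,v,w\}\in E(H)\text{ for some }w\}$ of every vertex to have size more than $\tfrac n2$, since $d_1(v)\le\binom{|N(v)|}{2}$. From this one proves a connecting lemma: any two vertices can be joined by a loose path of bounded length (length $2$ or $3$ suffices, the length-$3$ version with a ``free'' middle edge being the safe choice) whose internal vertices may be taken from an arbitrary linear-sized free set, in particular from the currently unused part of $R$. Since deleting $R$ changes each vertex degree by only $O(\rho n^2)\ll\gamma\binom n2$, the reduced hypergraph still satisfies essentially the same degree bound, so all later steps may be run inside $V(H)\setminus R$ while keeping $R$ available for connections.

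Next I would build an absorbing loose path $P_A\subseteq V(H)\setminus R$ with $|V(P_A)|=O(\beta n)$, where $\beta\ll\rho$, such that for every even set $U\subseteq V(H)\setminus(R\cup V(P_A))$ with $|U|\le 2\beta n$ there is a loose path on $V(P_A)\cup U$ with the same endpoints as $P_A$. For an ordered pair $(x,y)$, call a constant-size loose path with fixed endpoints $a,b$ an \emph{absorber for $(x,y)$} if the same vertex set together with $\{x,y\}$ also carries a loose path with endpoints $a,b$ (the prototypical gadget: the edge $\{a,p,b\}$ may be replaced by $\{a,x,p\},\{p,y,b\}$; in practice one takes a slightly larger gadget for robustness). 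An averaging argument over the shared vertex, using the degree hypothesis, shows every pair has $\Omega(n^{c})$ absorbers, so a random family $\mathcal A$ of $\beta n$ absorbers almost surely has (nearly) pairwise-disjoint members and covers each pair many times; chaining $\mathcal A$ into a single loose path $P_A$ via the connecting lemma, and then feeding the pairs of $U$ one at a time into distinct unused members of $\mathcal A$, gives the required absorbing property.

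The heart of the argument, and where I expect the real difficulty to lie, is the covering lemma: if $\delta_1(H)\ge(\tfrac7{16}+\gamma)\binom n2$, then all but at most $\varepsilon n$ vertices of $V(H)\setminus(R\cup V(P_A))$ can be covered by a bounded number of vertex-disjoint loose paths. This can be approached either through the weak hypergraph regularity method, reducing to an integer/fractional loose-path tiling problem in a constant-size reduced hypergraph where a short computation produces exactly the fraction $\tfrac7{16}$, or by a direct extremal argument: take a loose-path cover minimising first the number of paths and then the number of uncovered vertices, and argue that more than $\varepsilon n$ uncovered vertices lets one extend a path, merge two paths, or splice in an uncovered vertex, \emph{unless} the uncovered set is essentially an independent set of size about $\tfrac34 n$. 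That last obstruction is exactly the space-barrier construction $V=A\sqcup B$ with $B$ independent, $|A|=\lceil n/4\rceil-1$, and all triples meeting $A$ present: the double count $\sum_{e}|e\cap A|\le 2|A|<n/2$ shows it has no loose Hamilton cycle, while its minimum vertex degree is $\binom{n-1}{2}-\binom{\lfloor 3n/4\rfloor}{2}=\big(\tfrac7{16}+o(1)\big)\binom n2$, so the hypothesis excludes near-copies of it, and the stability/rerouting analysis closes the gap. With the covering lemma in hand the assembly is routine: cover $V(H)\setminus(R\cup V(P_A))$, use the connecting lemma to string the bounded family of loose paths together with $P_A$ through reservoir vertices into a single loose cycle $C^{*}$ on all but a set $U$ of size $O((\rho+\varepsilon)n)\le 2\beta n$ (discarding one vertex if needed so that $|U|$ is even), and finally absorb $U$ through $P_A\subseteq C^{*}$, producing a loose Hamilton cycle on all $n$ vertices.
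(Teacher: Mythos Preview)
The paper does not contain a proof of this statement. It is quoted in the introduction as a known result of Han~\cite{handissertation}, alongside the companion co-degree threshold of Czygrinow--Molla, purely to give context for the paper's own positive co-degree threshold (Theorem~\ref{looseHC}). There is therefore nothing in this paper to compare your proposal against.

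For what it is worth, your outline is essentially the correct shape of Han's argument: absorbing lemma plus reservoir/connecting lemma hold under the weaker hypothesis $\delta_1(H)\ge(\tfrac14+o(1))\binom{n}{2}$ (these are the Bu\ss--H\`an--Schacht lemmas the present paper also borrows), the constant $\tfrac{7}{16}$ enters only through the path-tiling step, and the exact threshold is recovered by a stability analysis against the space barrier $V=A\sqcup B$ with $|A|=\lceil n/4\rceil-1$. One quibble: you should not ``discard one vertex if needed so that $|U|$ is even''; since $n$ is even and every loose cycle has an even number of vertices, the leftover set $U$ is automatically even, and discarding a vertex would in any case destroy Hamiltonicity.
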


More general asymptotic results for $r$-graphs are known. With regards to loose Hamiltonian cycles in an $r$- graph $H$, K\"uhn, Mycroft, and Osthus in \cite{Kuhn2010general} give (among other things) an asymptotic threshold of $\displaystyle \delta_{r-1}(H)\approx\frac{n}{\lfloor \frac{r}{r-1} \rfloor (r-1)}$ for the existence of a loose Hamiltonian cycle.

To this broad body of results, we contribute exact and asymptotic positive co-degree thresholds. We begin by considering lower bounds. As previously mentioned, the degree threshold for both Hamiltonian cycles and perfect matchings in $2$-graphs is $\frac{n}{2}$. We can see that this threshold is tight by considering a slightly unbalanced complete bipartite graph; however, other constructions also achieve minimum degree $\frac{n}{2} - 1$ and avoid the specified spanning structures. For instance, consider the $2$-graph $H_{U,V}^2$, where $V(H) = U \sqcup V$, $N(u) = V(H) \setminus \{u\}$ for every $u \in U$, and $N(v) = U$ for every $v \in V$. If $|U| = \frac{n}{2} - 1$ and $|V| = \frac{n}{2} + 1$, then $H_{U,V}^2$ has no Hamiltonian cycle and no perfect matching. The following construction is an $r$-graph analogue of $H_{U,V}^2$, and will provide our primary source of lower bounds.

\medskip

\begin{const}\label{main construction}
$H_{U,V}^r$ is an $r$-graph with vertices in two classes $U$ and $V$, such that $|U| + |V| = n$. Every $(r-1)$ set $S \subset U$ is \textit{universal}, i.e., $N(S) = V(H_{U,V})\setminus S$. $V$ is a strongly independent set. Note that $\delta_{r-1}^+(H^r_{U,V}) = |U| - (r-2)$
\end{const}

\medskip 

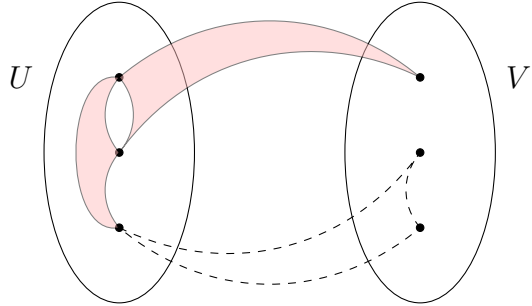
\begin{figure}[h]
\begin{center}
\begin{tikzpicture}



\draw (-2,0) ellipse [x radius=1, y radius=2];
\draw (-3,1) node[left]{$U$};

\draw (2,0) ellipse [x radius=1, y radius=2];
\draw (3,1) node[right]{$V$};

\filldraw (-2,0) circle (0.05 cm);
\filldraw (-2,1) circle (0.05 cm);
\filldraw (-2,-1) circle (0.05 cm);

\filldraw (2,0) circle (0.05 cm);
\filldraw (2,1) circle (0.05 cm);
\filldraw (2,-1) circle (0.05 cm);

\draw[fill=pink,opacity=0.5]
(-2,1) to[bend left=320] (-2,0) to[bend left=320] (-2,-1) to[bend left = 100] cycle
;

\draw[fill=pink,opacity=0.5]
(-2,1) to[bend right=320] (-2,0) to[bend right=320] (2,1) to[bend left = 320] cycle
;

;

\draw[dashed]
(2,-1) to[bend right=320] (2,0) to[bend right=320] (-2,-1) to[bend left = 320] cycle
;

\end{tikzpicture}

\caption{Two edges and a non-edge in $H^3_{U,V}$}
\end{center}

\end{figure}

Depending upon the sizes of $U$ and $V$ (and the value of $r$), $H_{U,V}^r$ may be easily seen to avoid certain spanning structures. 
If $|V| > \frac{n}{2}$, then $H_{U,V}^3$ does not contain either a Berge Hamiltonian cycle or a loose Hamiltonian cycle, since in any cyclic ordering of the vertices of $H_{U,V}^3$, two vertices of $V$ must be adjacent (and thus, using either model of cycle, contained in a $3$-edge together), while $V$ is strongly independent. More generally, $H_{U,V}^r$ has no Berge Hamiltonian cycle if $|V| > \frac{n}{2}$, and has no loose Hamiltonian cycle if $|V| > \frac{n}{r-1}$.


When $|V| = \lceil \frac{n+1}{2} \rceil$, $H_{U,V}^2$ is an example of a construction which optimizes minimum degree while avoiding a Hamiltonian cycle; it is natural to conjecture that $H_{U,V}^r$ optimally avoids Hamiltonian cycles for the appropriate choice of $|V|$. In the cases which we are able to resolve, we shall see that this is essentially true. Firstly, we exactly determine the positive co-degree threshold for Berge cycles in all uniformities.

\begin{restatable}{thm}{bergetheorem} \label{Berge HC}
Let $r\ge 2$, $n \ge 6r-10$ and suppose $H$ is an $n$-vertex $r$-graph with $\delta_{r-1}^+(H) \geq \frac{n}{2} - r + 2$ and no isolated vertices. Then $H$ contains a Berge Hamiltonian cycle
\end{restatable}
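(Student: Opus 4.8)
The plan is to reduce to a $2$-graph Hamiltonicity problem and then dress it up with hyperedges via Hall's Theorem. We may assume $r \ge 3$, since $r = 2$ is exactly Dirac's Theorem (using $\delta_1^+(H) = \delta(H)$ for $H$ with no isolated vertices). Let $G$ be the shadow $2$-graph on $V(H)$: put $xy \in E(G)$ whenever some $r$-edge of $H$ contains both $x$ and $y$. First I would check $\delta(G) \ge n/2$. Given $v \in V(H)$, pick an edge $e = \{v, u_1, \dots, u_{r-1}\}$ through $v$ (possible since $v$ is not isolated); then $S := e \setminus \{u_{r-1}\}$ is an $(r-1)$-set contained in an edge, so $d_{r-1}(S) \ge \delta_{r-1}^+(H) \ge n/2 - r + 2$, and the $\ge n/2 - r + 2$ vertices $x$ with $S \cup \{x\} \in E(H)$ (all satisfying $x \notin S$) together with $u_1, \dots, u_{r-2}$ give $n/2$ distinct $G$-neighbours of $v$. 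Dirac's Theorem (applicable since $n \ge 6r-10 \ge 3$) then yields a Hamiltonian cycle $C = v_1 v_2 \cdots v_n v_1$ of $G$.

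Next I would promote $C$ to a Berge Hamiltonian cycle of $H$. With indices mod $n$, set $P_i = \{v_i, v_{i+1}\}$ and $A_i = \{e \in E(H) : P_i \subseteq e\}$. A Berge Hamiltonian cycle of $H$ on the vertex order $v_1, \dots, v_n$ is precisely a choice of \emph{distinct} edges $e_i \in A_i$, i.e.\ a system of distinct representatives (SDR) of $(A_1, \dots, A_n)$: from such a system, $(v_1, e_1, v_2, e_2, \dots, v_n, e_n, v_1)$ is a Berge Hamiltonian cycle. Each $A_i$ is large: since $P_i$ lies in some edge $f$, deleting from $f$ one of its $r - 2 \ge 1$ vertices outside $P_i$ leaves an $(r-1)$-set $S_i$ with $P_i \subseteq S_i \subseteq f$, and every edge through $S_i$ lies in $A_i$, so $|A_i| \ge d_{r-1}(S_i) \ge n/2 - r + 2$.

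The hard part is verifying Hall's condition for $(A_1, \dots, A_n)$. For $I$ with $|I| \le n/2 - r + 2$ it is immediate, since any single $A_i$ already has at least $n/2 - r + 2$ elements. For larger $I$ I would double count pairs $(i, e)$ with $i \in I$ and $e \in A_i$: a fixed $r$-edge $e$ lies in $A_i$ exactly when $P_i$ is one of the pairs of $e$ that is an edge of $C$, and these form the subgraph of $C$ induced on the $r$-set $e$ — which, as $r < n$, is a disjoint union of paths on $r$ vertices and so has at most $r-1$ edges. Hence $(r-1)\,\bigl|\bigcup_{i \in I} A_i\bigr| \ge \sum_{i \in I} |A_i| \ge |I|(n/2 - r + 2) \ge |I|(r-1)$, where the final inequality amounts to $n \ge 4r - 6$, which follows from $n \ge 6r-10$. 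Dividing by $r-1$ gives $\bigl|\bigcup_{i \in I} A_i\bigr| \ge |I|$, so Hall's condition holds, an SDR exists, and we are done.

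I expect the real content to sit in this Hall verification — in balancing the co-degree-driven bound $|A_i| \ge n/2 - r + 2$ (obtained by restricting each covered pair to a covered $(r-1)$-set) against the combinatorial fact that a single hyperedge represents at most $r-1$ consecutive pairs of $C$. (The argument needs only $n \ge 4r-6$, so the hypothesis $n \ge 6r-10$ leaves room to spare and painlessly covers the incidental constraints $n \ge 3$ and $n > r$ used above; matching lower bounds for the threshold $n/2 - r + 2$ come from Construction~\ref{main construction}.)
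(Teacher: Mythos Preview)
Your proof is correct but takes a genuinely different route from the paper. The paper proceeds by induction on $r$, strengthening the statement: it shows one can choose the Berge Hamiltonian cycle so that each $e_i$ contains at most one vertex of every other consecutive pair $\{v_j,v_{j+1}\}$. The inductive step passes to the $(r-1)$-uniform shadow, obtains such a cycle there, and then extends each $(r-1)$-edge $e_i'$ to an $r$-edge by picking one more vertex avoiding at most $2r-4$ forbidden vertices (possible since $n\ge 6r-10$ forces $\delta_{r-1}^+(H)\ge 2r-3$); the extra property is exactly what guarantees the resulting $e_i$ are pairwise distinct. You instead jump straight to the $2$-shadow, apply Dirac once, and handle distinctness globally via Hall's theorem, the key observation being that a single $r$-edge can represent at most $r-1$ consecutive pairs of the cycle. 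Your approach is more direct, avoids the strengthened inductive hypothesis, and actually only needs $n\ge 4r-6$; the paper's approach, on the other hand, yields the additional structural property of the Berge cycle for free.
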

Thus, if $|V| = \lceil \frac{n+1}{2} \rceil$, then $H_{U,V}^r$ optimizes minimum positive co-degree while avoiding a Berge Hamiltonian cycle.

We also asymptotically determine the minimum positive co-degree threshold for loose Hamiltonian cycles in $3$-graphs. Our bound shows that Construction~\ref{main construction} is asymptotically optimal.

\begin{restatable}{thm}{loosethm}\label{looseHC}
For any $0 < \varepsilon < \frac{1}{2}$, there exists $n_0 \in \mathbb{N}$ such that the following holds. Let $H$ be an $n$-vertex $3$-graph with $n \geq n_0$, $n$ even, and $\delta_2^+(H) \geq \left( \frac{1}{2} + \varepsilon \right)n$. Then $H$ contains a loose Hamiltonian cycle.
\end{restatable}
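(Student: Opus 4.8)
The plan is to run the absorption method, exploiting that the hypothesis is far stronger than what the ``generic'' parts of the argument require. We may assume $H$ has no isolated vertex, since such a vertex lies in no loose cycle. Two consequences of the hypothesis are used throughout: (a) every non-isolated vertex has more than $n/2$ neighbours in the shadow graph $\partial H$ --- if $v$ lies in an edge $vxy$ then $\{v,x\}$ is in the shadow, so $d_2(v,x)\ge(\tfrac12+\eps)n$ --- whence $\delta(\partial H)>n/2$; and (b) \emph{robust extension}: if $\{x,y\}$ is contained in an edge and $|Z|<\eps n$, then more than $n/2$ vertices $w\notin Z$ satisfy $xyw\in E(H)$. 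Together these give a \emph{connecting lemma}: any two non-isolated vertices $u,v$ are joined by a loose path with exactly three interior vertices, avoiding any prescribed set of size below $\eps n$ --- take a common $\partial H$-neighbour $z$ of $u$ and $v$ (there are at least $2\eps n-2$ of them), then a co-neighbour $w_1$ of $\{u,z\}$ and a co-neighbour $w_2$ of $\{z,v\}$, all distinct and outside the forbidden set, and use the edges $uw_1z$ and $zw_2v$. After fixing constants $\rho\ll\eta_2\ll\eta_1\ll\eps$ and setting aside a small random reservoir $R$ of $\rho n$ vertices --- which inherits (a) and (b) with $n$ replaced by $\Theta(|R|)$ --- we may moreover take all three interior vertices inside $R$, so that connections made at the end do not clash with the rest of the construction.

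\medskip
\noindent\textbf{Absorbing path.} Call $(s,a,b,c,d,e,t)$ a \emph{gadget} if $\{s,a,b\},\{b,c,d\},\{d,e,t\}\in E(H)$ (so it spans a loose path from $s$ to $t$), and say it \emph{absorbs} an ordered pair $(x,y)$ disjoint from it if in addition $\{s,a,x\},\{x,b,c\},\{c,d,y\},\{y,e,t\}\in E(H)$, so that $\{s,a,b,c,d,e,t,x,y\}$ also spans a loose path from $s$ to $t$. Since each of $\{s,a\},\{b,c\},\{c,d\},\{e,t\}$ lies in a gadget edge and hence in the shadow, robust extension shows that any fixed gadget absorbs $\Omega(n^2)$ pairs while any fixed pair is absorbed by $\Omega(n^7)$ gadgets (build one greedily: pick $c$ in the common shadow-neighbourhood of $x$ and $y$, then $b$, then $d$, then the pairs $\{e,t\}$ and $\{s,a\}$ inside suitable intersected link graphs, with $\Omega(n)$ or $\Omega(n^2)$ options at each step). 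A standard probabilistic selection (as in R\"odl--Ruci\'nski--Szemer\'edi) then yields, inside $V\setminus R$, a loose path $P_{\mathrm{abs}}$ on at most $\eta_1 n$ vertices such that for every $W\subseteq V\setminus(V(P_{\mathrm{abs}})\cup R)$ with $|W|$ even and $|W|\le\eta_2 n$ there is a loose path on exactly $V(P_{\mathrm{abs}})\cup W$ with the same ends as $P_{\mathrm{abs}}$ (pair $W$ up arbitrarily and greedily assign each pair a still-unused gadget; the parity requirement is automatic, since loose paths have an odd number of vertices, so each gadget swallows an even number).

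\medskip
\noindent\textbf{Almost-spanning cover and assembly.} It remains to cover all but at most $\tfrac12\eta_2 n$ of the vertices outside $V(P_{\mathrm{abs}})\cup R$ by vertex-disjoint loose paths, few enough that the connecting lemma can then splice them and $P_{\mathrm{abs}}$ cyclically into one loose cycle $C$ using $O(1)$ vertices of $R$ per splice. Granting this, $W:=V\setminus V(C)$ is disjoint from $V(P_{\mathrm{abs}})$, has size at most $\eta_2 n$, and has even size since $n$ and $|V(C)|$ are both even, so absorbing $W$ into the $P_{\mathrm{abs}}$-segment of $C$ produces a loose Hamilton cycle. \emph{This almost-spanning step is the main obstacle.} A greedy construction does not suffice: a greedily built maximal loose path can stop after only about $(\tfrac12+\eps)n$ vertices (for instance in an $H^3_{U,V}$-type hypergraph with $|U|\approx(\tfrac12+\eps)n$, once $U$ has been used up), leaving a strongly independent remainder within which one cannot restart. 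Instead I would regularise $H$ --- the weak regularity lemma for $3$-graphs suffices for loose structures --- pass to the reduced hypergraph, which inherits an approximate form of the positive co-degree condition, solve there the corresponding near-tiling problem by loose paths of large constant length (an essentially fractional question, so amenable to LP duality or a hypergraph matching argument), and blow the solution back up into the required family of loose paths in $H$. Equivalently, one can cast the whole problem as a \emph{rainbow Hamilton cycle} problem --- split $V$ into equal-size ``link'' and ``joint'' classes $A$ and $B$, and seek a cyclic ordering of $A$ together with a distinct $b\in B$ assigned to each consecutive pair so that the resulting triple is an edge --- which exposes the genuine obstruction (some $b\in B$ might fail to complete \emph{any} chosen pair of consecutive links to an edge) and indicates that, beyond the routine absorption and connecting machinery, what is really needed is a Dirac-type statement robust enough to handle large but spread-out colour classes.
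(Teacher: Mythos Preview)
Your overall architecture matches the paper's: an absorbing path built from $7$-vertex gadgets, a reservoir used for connections, and an almost-spanning path-cover obtained via weak regularity and a tiling argument in the reduced hypergraph. The absorbing and connecting parts are fine and essentially identical to what the paper does.

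The genuine gap is in the almost-spanning step, which you correctly flag as ``the main obstacle'' but do not actually resolve. Two points deserve emphasis. First, your assertion that the reduced hypergraph ``inherits an approximate form of the positive co-degree condition'' is \emph{not} automatic, and this is where positive co-degree differs sharply from minimum vertex degree or ordinary co-degree. Deleting $o(n^3)$ edges from $H$ (as one does when discarding irregular and sparse triples) can kill a positive-co-degree pair outright, so the standard ``averaging over clusters'' proof of inheritance breaks. The paper deals with this via a separate cleanup lemma (iteratively removing low-co-degree pairs until positive co-degree is restored with small loss) and then argues inheritance for a \emph{subhypergraph} $K'$ of the cluster hypergraph. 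Without some device of this sort your reduced-hypergraph step has no hypothesis to work with.

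Second, even granting a positive-co-degree condition on the reduced hypergraph, you still need to prove a near-tiling result there, and ``an essentially fractional question, amenable to LP duality or a hypergraph matching argument'' is a hope, not an argument. The paper supplies a concrete combinatorial lemma: under $\delta_2^+(K')\ge(\tfrac12+\eps')t$ with no isolated vertices, $K'$ has an $\alpha$-deficient $C_4^3$-tiling, proved by a switching/augmentation analysis using co-degree cherries in the uncovered set. Each $C_4^3$ tile then yields two long loose paths in $H$ via a standard regularity lemma for loose paths in a single regular triple. Your alternative ``rainbow Hamilton cycle'' reformulation is a restatement of the difficulty rather than a way around it. In short, the skeleton is right, but the two pieces that constitute the paper's actual contribution --- positive-co-degree inheritance through regularity, and the $C_4^3$-tiling lemma --- are precisely the pieces you leave unproved.
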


Unlike in the case of $2$-graphs, minimum degree thresholds for Hamiltonian cycles do not imply thresholds for $r$-uniform perfect matchings, in which each vertex is contained in a single $r$-edge, since neither a Berge Hamiltonian cycle nor a loose Hamiltonian cycle necessarily contains a perfect matching as a subhypergraph. (In fact, due to divisibility concerns, any $r$-graph containing a loose Hamiltonian cycle \textit{cannot} contain a perfect matching). Perhaps counterintuitively, the ordinary co-degree thresholds for $r$-uniform perfect matchings are often \textit{higher} than for various forms of Hamiltonian cycle: for any $r$, a co-degree threshold of (up to an additive constant depending only on $r$) $\frac{n}{2}$ for the existence of a perfect matching. By contrast, the co-degree threshold for $r$-uniform \textit{almost} perfect matchings becomes smaller as $r$ grows, and is (about) $\frac{n}{r}$. We direct readers towards the survey of R\"odl and Ruci\'nski\cite{Rödl2010_Book} for more precise statements and discussion of both.

We exactly determine the positive co-degree threshold for $3$-uniform perfect matchings, and find the positive co-degree threshold for $r$-uniform perfect matchings up to an additive constant. 

\begin{restatable}{thm}{threePM}\label{threePM}
Suppose $H$ is an $n$-vertex $3$-graph such that $3$ divides $n$,
$\delta_2^+(H) \geq \frac{2n}{3} - 1$, and $d_1(v) > 0$ for every $v \in V(H)$. Then $H$ contains a perfect matching.
\end{restatable}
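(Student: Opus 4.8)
The plan is to argue by contradiction using the extremal structure of a maximum matching (the statement carries no ``$n$ large enough'' hypothesis, so an absorption argument is not available, and in any case the only relevant barrier is the space barrier $H^3_{U,V}$). Suppose $H$ has no perfect matching, let $M$ be a maximum matching, and set $W = V(H)\setminus V(M)$, so that $|W| = w$ is a positive multiple of $3$ and $W$ is independent in $H$ (else $M$ extends). The key tool is the \emph{co-neighbourhood bound}: if a pair $\{x,y\}$ lies in an edge of $H$, then $N(x,y) := \{z : xyz \in E(H)\}$ has size at least $\tfrac{2n}{3}-1$, so at most $\tfrac n3-1$ vertices of $V(H)$ fail to extend $\{x,y\}$; call such a pair a \emph{link pair}. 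Since every vertex lies in an edge, this already forces the shadow graph of $H$ to have minimum degree at least $\tfrac{2n}{3}$, in particular every vertex of $W$ has at least $\tfrac{2n}{3}-w+1$ shadow-neighbours inside $V(M)$.

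For each $a \in W$ fix an edge $e_a \ni a$; since $W$ is independent, $|e_a \cap W| \in \{1,2\}$, and when $|e_a \cap W| = 1$ we also record whether the two vertices of $e_a \cap V(M)$ lie in one or in two edges of $M$. In each of these cases I would look for a small \emph{augmenting exchange}: delete the one or two edges of $M$ meeting $e_a$, add $e_a$, and cover the $2$ or $4$ newly exposed vertices of $V(M)$ by further $H$-edges whose remaining vertices are supplied by $W$. The existence of these extra edges is exactly what the co-neighbourhood bound provides --- for instance, if $e_a = \{a,x,y\}$ with $x,y$ in distinct matching edges $\{x,u_1,u_2\}$, $\{y,v_1,v_2\}$, then $\{u_1,u_2\}$ and $\{v_1,v_2\}$ are link pairs, and distinct vertices $b \in N(u_1,u_2)\cap W$, $c \in N(v_1,v_2)\cap W$ would already give a larger matching. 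When no such exchange exists, one instead concludes that various link pairs lying inside $V(M)$ have their entire co-neighbourhood inside $V(M)$ (minus a bounded number of vertices); together with $|N(\cdot,\cdot)| \ge \tfrac{2n}{3}-1$ and $|V(M)| = n-w$ this yields an inequality of the shape $\tfrac{2n}{3}-1 \le n - w - O(1)$, i.e.\ $w \le \tfrac n3 + O(1)$. Iterating this analysis, together with rotations of $M$ (swaps that preserve $|M|$ but change $W$, e.g.\ using an $e_a$ with $|e_a\cap W|=2$), should drive these inequalities down to $w$ being a small constant.

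The main obstacle is precisely the endgame, where $w$ is bounded by a constant (morally: $H$ has been forced to look almost exactly like $H^3_{U,V}$ of Construction~\ref{main construction} with $|V| > \tfrac n3$). Here the linear inequalities above are no longer decisive, and I would finish either (i) by a stability argument, showing that a near-extremal $H$ with no perfect matching must in fact have $\delta_2^+(H) \le \tfrac{2n}{3}-2$, contradicting the hypothesis; or (ii) for the genuinely small residual cases (e.g.\ $w = 3$, and possibly $w = 6$) by a direct density count, since $\delta_2^+(H) \ge \tfrac{2n}{3}-1$ forces so many edges on the relevant $O(1)$-neighbourhood of $W \cup (\text{a few blocks of }M)$ that a pair of disjoint edges re-matching $W$ with one block of $M$ cannot be avoided. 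Throughout, care is needed to verify that each triple used in an exchange is a genuine edge of $H$, that all vertices involved are pairwise distinct, that $w$ is large enough to supply the required distinct ``new'' vertices, and that the hypothesis $d_1(v) > 0$ remains usable after any rotation; these checks are routine but must be tracked explicitly.
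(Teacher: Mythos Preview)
Your framework---take a maximum matching $M$, set $W=V(H)\setminus V(M)$, and seek an augmenting exchange---is the same as the paper's, and your single-vertex exchange (fix $e_a\ni a$ meeting two matching edges $\{x,u_1,u_2\}$ and $\{y,v_1,v_2\}$, then look for $b\in N(u_1,u_2)\cap W$ and $c\in N(v_1,v_2)\cap W$) is a reasonable first move. The genuine gap is the step in the middle: once this exchange fails you correctly extract $w\le \tfrac n3+O(1)$, but then you write that ``iterating this analysis, together with rotations of $M$, should drive these inequalities down to $w$ being a small constant.'' No mechanism is given for this, and none is apparent: rotations preserve $w$, and re-running the same exchange on a rotated $W$ yields the same inequality $w\le \tfrac n3+O(1)$, not a tighter one. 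Your exchange needs co-neighbourhoods of pairs inside $V(M)$ to reach into $W$, and that simply stops happening once $w$ drops below about $\tfrac n3$; nothing in the proposal bridges the interval between $w\approx \tfrac n3$ and $w=O(1)$.

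The paper closes this gap with a different augmentation. Instead of pivoting on one $a\in W$, it takes three uncovered vertices $x,y,z$ simultaneously and searches for an \emph{$M$-extender}: two matching edges $x_1y_1z_1,\,x_2y_2z_2$ such that $xx_1x_2,\,yy_1y_2,\,zz_1z_2\in E(H)$, so that two edges of $M$ are traded for three. An averaging argument over $M$ (each of $x,y,z$ has $\ge\tfrac{2n}{3}-O(1)$ link-neighbours in $V(M)$, so some matching edge is hit at least $7$ times, and then a second at least $7$ times relative to the first) produces such a pair of edges whenever $W$ contains three vertices each having at most one neighbour inside $W$. Combined with the easy observation that $W$ cannot contain two disjoint link pairs, this forces $|W|\le 3$ in one stroke---there is no iteration. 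The endgame for $|W|=3$ is then a direct case analysis on the six-vertex set $\{x,y,z,x',y',z'\}$, closer in spirit to your option~(ii) than to stability, but more structural than a density count.
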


\begin{restatable}{thm}{rPM}\label{rPM}
Let $r\geq 2, n\geq r^3+r^2-r$ where $r$ divides $n$, and suppose $H$ is an $n$-vertex $r$-graph with $\delta_{r-1}^+(H) \geq (\frac{r-1}{r}) n+r^2$ and $d_1(v) > 0$ for every $v \in V(H)$. Then $H$ contains a perfect matching.
\end{restatable}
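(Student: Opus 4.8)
The plan is to argue by a greedy/absorption-style induction on $n$, peeling off matching edges one at a time while maintaining the positive co-degree hypothesis. First I would handle the base case: when $n = r$ the hypothesis $d_1(v) > 0$ for all $v$ together with $\delta_{r-1}^+(H) \ge 1$ forces $H$ to be a single edge, which is trivially a perfect matching. For the inductive step, suppose the statement holds for all smaller (multiples of $r$) vertex counts; given $H$ on $n$ vertices, I want to find one edge $e$ such that $H - V(e)$ still satisfies the hypotheses with parameter $n' = n - r$ in place of $n$, i.e. $\delta_{r-1}^+(H - V(e)) \ge \frac{r-1}{r}(n-r) + r^2 = \frac{r-1}{r}n + r^2 - (r-1)$ and no vertex of $H - V(e)$ is isolated. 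Since deleting the $r$ vertices of $e$ decreases the co-degree of any surviving $(r-1)$-set $S$ by at most $r$ (one potential lost edge per deleted vertex), and $\delta_{r-1}^+(H) \ge \frac{r-1}{r}n + r^2$, after deletion every surviving $S$ that still lies in an edge has co-degree at least $\frac{r-1}{r}n + r^2 - r \ge \frac{r-1}{r}(n-r) + r^2 - (r-1) + 1 - 1$; a careful accounting of the $\frac{r-1}{r}$ slack (we lose $\frac{r-1}{r}\cdot r = r-1$ of ``allowed'' degree but potentially $r$ of actual degree) shows the co-degree condition is essentially self-propagating with the $+r^2$ buffer absorbing the discrepancy, which is exactly why the additive constant $r^2$ (rather than $0$) appears in the statement.

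The real content is choosing $e$ so that $H - V(e)$ has no isolated vertex, since the co-degree bound by itself does not rule out that some vertex $v$ lies only in edges meeting $V(e)$. Here I would use the following idea: the number of vertices $w$ that are ``dangerous'' — meaning $w$ lies in very few edges, so few that a bad choice of $e$ could isolate $w$ — is small, because any $(r-1)$-set through $w$ contained in an edge has co-degree $\ge \delta_{r-1}^+(H)$, and a pigeonhole/double-counting argument shows $w$ actually has large vertex degree $d_1(w)$ (roughly $\delta_{r-1}^+(H)/(r-1)$ or so, since a single high-co-degree $(r-1)$-set already forces many edges at $w$). With every vertex having vertex-degree growing linearly in $n$, and $n \ge r^3 + r^2 - r$ ensuring enough room, one can greedily pick an edge $e$ avoiding the small set of vertices we must be careful about: concretely, start by fixing any $(r-1)$-set $S$ contained in an edge, note $|N(S)| \ge \delta_{r-1}^+(H) \ge \frac{r-1}{r}n + r^2$, and argue that at least one $x \in N(S)$ has the property that $S \cup \{x\}$ is disjoint from — or at least does not isolate — the bounded collection of low-degree vertices. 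The threshold $n \ge r^3 + r^2 - r$ is presumably exactly what makes this counting go through.

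The main obstacle I anticipate is precisely this interplay between the two hypotheses when $n$ is at the low end of the allowed range: the induction reduces $n$ by $r$ each step, so after many steps $n$ becomes comparable to $r^3$, and one must verify that the co-degree bound stays above the inductive requirement \emph{and} that one can still always avoid creating an isolated vertex. I would therefore structure the proof to not literally iterate the induction down to $n=r$, but instead iterate only while $n$ is large, and at some point $n_1 = \Theta(r^3)$ invoke a separate finishing argument — perhaps that once $\delta_{r-1}^+(H) \ge \frac{r-1}{r}n + r^2 > n - \frac{n}{r}$, the hypergraph is dense enough that a perfect matching is forced more directly (e.g. by an Aharoni–Haxell / deficiency-version Hall-type argument on the bipartite-like structure between $(r-1)$-sets and their neighborhoods). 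An alternative and possibly cleaner route, avoiding the delicate low-$n$ case analysis entirely, is to mimic the $2$-graph intuition behind Construction~\ref{main construction} directly: show that $\delta_{r-1}^+(H) \ge \frac{r-1}{r}n + r^2$ forces $H$ to contain a spanning subgraph close to $H^r_{U,V}$ with $|U|$ large, and then build the matching by hand inside that structure; I would try the induction first since it is more robust, and fall back on the structural approach if the base-case bookkeeping becomes unwieldy.
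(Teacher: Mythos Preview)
Your inductive ``peel off one edge at a time'' plan has a genuine gap that the $+r^2$ buffer does not fix. Deleting the $r$ vertices of an edge can drop the co-degree of every surviving $(r-1)$-set by up to $r$, while the required threshold $\frac{r-1}{r}n'+r^2$ drops by only $r-1$ when $n$ drops to $n'=n-r$. So each step costs you exactly $1$ against the hypothesis, and after $r^2$ steps the buffer is gone. Your proposed cure---iterate while $n$ is large and then ``finish at $n_1=\Theta(r^3)$''---is circular: at that point you would need a perfect matching in an $r$-graph with $\delta_{r-1}^+ \ge \frac{r-1}{r}n_1$ and \emph{no} additive buffer, which is a stronger statement than the theorem itself. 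The ``no isolated vertex'' maintenance is also not handled: every vertex already has high vertex-degree (so there is no ``bounded collection of low-degree vertices'' to avoid), yet nothing prevents some vertex $w$ from having its entire link concentrated on a few vertices, so that many choices of $e$ isolate $w$.

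The paper's argument is entirely different and does not induct on $n$. One takes a maximum matching $M$ and supposes the uncovered set $U$ has at least $r$ vertices. The key lemma is a Hall-type statement: if $S_1,\dots,S_k$ ($k\le r$) are disjoint subsets of $U$, each contained in some edge and each extending to at least $\frac{r-1}{r}n$ vertices of $V(M)$, then a single edge $e\in M$ contains distinct $v_1,\dots,v_k$ with each $S_i\cup\{v_i\}$ lying in an edge of $H$ (proved by showing an auxiliary bipartite graph has an $S$-saturating matching). Starting from $r$ singletons in $U$ and iterating this lemma at most $r-1$ times grows them into $r$ pairwise disjoint edges while sacrificing at most $r-1$ edges of $M$, contradicting maximality. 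The $+r^2$ in the hypothesis is used precisely once, to discount the at most $r^2$ vertices lying in $S_1\cup\cdots\cup S_r$ when lower-bounding the number of extensions into $V(M)$; it is not an inductive cushion.
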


We remark that the positive co-degree threshold for almost perfect matchings is close to the threshold given in Theorem~\ref{rPM}. In addition to proving Theorem~\ref{threePM} and Theorem~\ref{rPM}, we observe via Construction~\ref{main construction} that they are tight. Using Construction~\ref{main construction}, we can also exhibit $n$-vertex $r$-graphs with minimum positive co-degree  $\left( \frac{r-1}{r}  - \varepsilon \right)n$ which do not admit a matching of size $\left(\frac{1}{r} - \frac{\varepsilon}{r-1}\right)n$. Thus, the positive co-degree thresholds for perfect and almost perfect matchings are ``smoothly'' related; this is in contrast to the co-degree thresholds for perfect and almost-perfect matchings.

The remainder of this paper is organized as follows. In Subsection~\ref{defs}, we summarize a few additional definitions and notation which will be used throughout the paper. In Section~\ref{matching section}, we give our results on perfect matchings. In Section~\ref{berge section}, we prove Theorem~\ref{Berge HC}. In Section~\ref{loose cycle section}, we prove Theorem~\ref{looseHC}. The proofs in Sections~\ref{matching section} and~\ref{berge section} use classical techniques; for the proof of Theorem~\ref{looseHC}, we shall use the absorbing method. Since the absorbing method is required only in Section~\ref{loose cycle section}, we postpone all discussion of the absorbing method and associated lemmas until then. Finally, in Section~\ref{further questions}, we briefly comment on our results and a variety of open directions in this area.

\subsection{Definitions and Notation}\label{defs}

Related to the varying notions of degree in hypergraphs, we shall require a few definitions and pieces of notation. In an $r$-graph $H$, the \textit{neighborhood} of a vertex $v \in V(H)$ is 
\[ N(v) := \{ u \in V(H)\setminus\{v\}: \{u,v\} \subseteq e \text{ for some } e \in E(H) \}, \]
while for an $(r-1)$-set $S \subset V(H)$, the \textit{co-degree neighborhood} of $S$ is
\[ N(S)  := \{u \in V(H)\setminus S: S\cup \{u\} \in E(H)\}. \]
Thus, $d_1(v) = |N(v)|$ and $d_{r-1}(S) = |N(S)|$. When $S$ is an explicitly defined set, we may drop set notation when denoting $N(S)$; for instance, if $S = \{x,y\}$, we write $N(x,y)$ rather than $N(\{x,y\})$ for the co-degree neighborhood of $S$.

Given $v \in V(H)$, the \textit{link graph} $L(v)$ is the $(r-1)$-uniform graph on vertex set $V(H) \setminus \{v\}$, where an $(r-1)$-set of vertices $S$ spans a hyperedge if and only if $S \cup \{v\} \in E(H)$. That is, the hyperedges of $L(v)$ correspond to those sets $S$ such that $v \in N(S)$. The study of link graphs is often useful in positive co-degree problems, particularly when $r = 3$ and thus the link graphs are actually $2$-graphs, to which we can apply any number of graph theoretic results. Our arguments shall often consider link graphs, though often we shall want to consider multiple link graphs simultaneously. In particular, given an $r$-graph $H$, the \textit{shadow graph} $S(H)$ of $H$ is the $(r-1)$-graph with vertex set $V(H)$ and edge set 
\[ E(S(H)) = \{ e : e \in E(L(v)) \text{ for some } v \in V(H)\}.\]
Related to the shadow graph, given an $(r-1)$-graph $K$, we say that $T \subset V(H)$ spans a \textit{co-degree K} if the subhypergraph of $S(H)$ induced on $T$ contains a copy of $K$. When working with minimum positive co-degree conditions, knowledge of the co-degree ``subgraphs'' of $H$ will prove useful in inferring the structure of $H$ itself.

\section{Perfect matchings}\label{matching section}

We begin by considering perfect matchings in $3$-graphs. Using Construction~\ref{main construction}, we observe that $H_{U,V}^3$ with $|V| = \lfloor \frac{n}{3} + 1 \rfloor$ does not contain a perfect matching and has minimum positive co-degree $\lceil \frac{2n}{3} \rceil - 2$. Our first theorem shows that the positive co-degree threshold for perfect matchings is exactly $\frac{2n}{3} - 1$. Note also that taking $H_{U,V}^3$ with $|V| = \left(\frac{1}{3} + \varepsilon \right)n$ yields a construction with minimum positive co-degree approximately $\left( \frac{2}{3} - \varepsilon \right)n$ and no matching of size greater than $\left( \frac{1 - \varepsilon}{3}\right)n$. Thus, the threshold for almost perfect matchings in $3$-graphs does not differ sharply from the threshold given by Theorem~\ref{threePM}.





\threePM*


\begin{proof}
The result is immediate for $n=3$, and is straightforward to check for $n = 6$. 
For $n\geq 9$, let $M$ be a maximum matching in $H$. If $M$ is a perfect matching, then we are done, so assume that $M$ does not cover all the vertices of $H$. Set $U := V(H)\setminus V(M)$, and $|U| =: k \geq 3$. Thus, $|V(M)| = n-k \leq n-3$. Our goal is to obtain a contradiction by showing that if $M$ is not perfect, then $M$ is not maximum, i.e., we can find a matching matching $M'$ strictly larger than $M$. We shall achieve this conclusion in several steps, which will involve an examination of the vertices in $U$.

First, observe that since $M$ is maximum, $U$ must be independent. We claim that, moreover, $U$ does not contain two disjoint pairs, say $x,y$ and $v,w$, such that $d_2(x,y) > 0$ and $d_2(v,w) > 0$ (here and hereafter, we abuse the formal notation, as $x,y$ is not a set). Indeed, suppose that such pairs exist in $U$. Since $U$ is independent, we have $N(x,y) \subseteq V(M)$ and $N(v,w) \subseteq V(M)$. By the bound on $\delta_2^+(H)$, both of these neighborhoods have size at least $\frac{2n}{3} - 1 > \frac{2(n-k)}{3}$. 
Thus, there must be some $3$-edge $abc \in M$ with $a \in N(x,y), b \in N(v,w)$. However, this yields a contradiction to the maximality of $M$, since $M' = M \setminus \{abc\} \cup \{axy, bvw\}$ is a strictly larger matching than $M$.

Next, we claim that $U$ contains at most two vertices whose neighborhoods intersect $U$ in at most one vertex. To the contrary, suppose $x,y,z \in U$ and each has at most one neighbor in $U$. 

We consider an auxiliary, mixed-uniformity, edge-colored multigraph $A$, with $V(A) = V(M)$. The edge set $E(A)$ will consist of the $3$-edges of $M$, together with the following colored $2$-edges: $C_x \subset E(A)$ is the set of edges 
$\{uv: u,v \in V(A) \text{ and } uvx \in E(H)\}.$
We assign color $x$ to every edge of $C_x$. Analogously, we define $C_y \subset E(A)$ to the set of edges 
$\{uv: u,v \in V(A) \text{ and } uvy \in E(H)\},$
each of which is assigned color $y$, and $C_z \subset E(A)$ is the set of edges 
$\{uv: u,v \in V(A) \text{ and } uvz \in E(H)\},$
each of which is assigned color $z$. Note that, if $N(u,v)$ contains more than one of $x,y,z$, we shall include multiedges between $u$ and $v$ of the appropriate colors.

Now, suppose that $x_1y_1z_1, x_2y_2z_2$ are $3$-edges of $M$ such that $x_1x_2 \in C_x$, $y_1y_2 \in C_y$, and $z_1z_2 \in C_z$. We shall call such a subgraph of $A$ an \textit{M-extender} for $x,y,z$.

\begin{figure}[h]
\begin{center}
\begin{tikzpicture}



    \draw[fill=pink,opacity=0.5]
(0,1) to[bend left=320] (0,0) to[bend left=320] (0,-1) to[bend left = 100] cycle
;

    \draw[fill=pink,opacity=0.5]
(3,1) to[bend right=320] (3,0) to[bend right=320] (3,-1) to[bend right = 100] cycle
;

\draw[red] (0,1) -- (3,1) node[pos= 0.5, above]{$x$};
\draw[blue] (0,0) -- (3,0) node[pos= 0.5, above]{$y$};
\draw[teal] (0,-1) -- (3,-1) node[pos= 0.5, above]{$z$};

\filldraw (0,0) circle (0.05 cm);
\filldraw (0,1) circle (0.05 cm);
\filldraw (0,-1) circle (0.05 cm);
\draw (0,1) node[above left]{$x_1$};
\draw (0,0) node[above right]{$y_1$};
\draw (0,-1) node[below left]{$z_1$};

\filldraw (3,0) circle (0.05 cm);
\filldraw (3,1) circle (0.05 cm);
\filldraw (3,-1) circle (0.05 cm);
\draw (3,1) node[above right]{$x_2$};
\draw (3,0) node[above left]{$y_2$};
\draw (3,-1) node[below right]{$z_2$};

\end{tikzpicture}

\caption{An $M$-extender}
\end{center}

\end{figure}
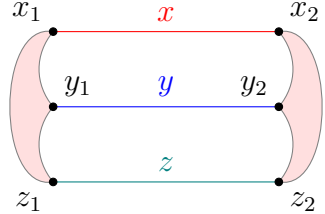

Now observe, if we find an $M$ extender for $x,y,z$ in $A$, then 
\[M' = (M \setminus \{x_1y_1z_1, x_2y_2z_2\}) \cup \{xx_1x_2, yy_1y_2, zz_1z_2\}\] 
is a matching of $H$ which is strictly larger than $M$. Thus, our goal is to find such a subgraph of $A$. We begin by finding one candidate $3$-edge whose vertices are appropriately incident to edges colored $x,y,$ and $z$.

First, we denote by $V_x$ the set of vertices in $V(A)$ which are incident to some edge in $C_x$, and analogously define $V_y, V_z$. For $e \in M$, we define 
\[ f(e) := \sum_{v \in \{x,y,z\}} |V(e) \cap V_v|. \]
Note that $ \sum_{e \in M} f(e)$ is simply $|V_x| + |V_y| + |V_z|$. Now, since $H$ contains no isolated vertices and $U$ is independent, there is some $v \in V(A)$ which has positive co-degree with $x$, i.e., $v \in V_x$. Since $\delta_2^+(H) \geq \frac{2n}{3} - 1$, there are at least $\frac{2n}{3} - 1$ vertices $u$ with $xvu \in E(H)$. By the assumption on $x$, at most one of these is contained in $U$, so 
\[ |V_x| \geq |\{v\} \cup \left( N(x,v) \cap V(A) \right)| \geq \frac{2n}{3} - 1 \](and analogously we bound $|V_y|, |V_z|$). Thus
\[ \sum_{e \in M} f(e) \geq 3\left( \frac{2n}{3} - 1\right) > 3\left( \frac{2(n-k)}{3}\right)\]
since $k \geq 3$.
By averaging, there is some $e \in M$ with $f(e) \geq 7$. Thus, every vertex of $e$ is in at least one of $V_x, V_y, V_z$, and in fact, either two vertices of $e$ are in all three sets, or one vertex of $e$ is in all $3$ and the other two vertices of $e$ are in two of the three sets. In either case, there is a labeling $x', y', z'$ of $V(e)$ such that $x' \in V_x$, $y' \in V_y$, and $z' \in V_z$. 

Set $M^- := M \setminus \{x'y'z'\}$. We define
$$V_x^- := \{v: v \in V(M^-) \text{ and } vx' \in C_x\}$$
and similarly define $V_y^-$ and $V_z^-$. Note that $|V_x^-| \geq \frac{2n}{3} - 4 = \frac{2(n-6)}{3}$, since $N(x,x') \geq \frac{2n}{3} - 1$ and at most three members of $N(x,x')$ are in $U \cup \{x', y', z'\}$.
Similarly, $|V_y^-| \geq \frac{2(n-6)}{3}$ and $|V_z| \geq \frac{2(n-6)}{3}$. Now, for $e \in M^-$ define 
\[ f^-(e) := \sum_{v \in \{x,y,z\}} |V_v^- \cap V(e)|. \]
Again, we have
\[ \sum_{e \in M^-} f^-(e) = \sum_{v \in \{x,y,z\}} |V_v^-|  \geq 3 \cdot \frac{2(n-6)}{3} \geq 2|V(M^-)|  \]
Thus, the average value of $f^-(e)$ is at least $6$. If some $e \in M^-$ has $f^-(e) > 6$, then $e$ and $x'y'z'$ must form an $M$-extender. If there is no $e \in M^-$ with $f^-(e) > 6$, then the above inequalities are tight, i.e., $|V(M^-)| = n-6$  and $|V_x^-| = |V_y^-| = |V_z^-| = \frac{2(n-6)}{3}$. In order to satisfy $\delta_2^+(H) \geq \frac{2n}{3} - 1$, we must have $y', z' \in N(x,x')$ and some $v_x \in I \cap N(x,x')$, and analogously for $N(y,y')$ and $N(z,z')$. In fact, since we assume that each of $x,y,z$ has at most one neighbor in $U$, and we have already shown that $U$ cannot contain two disjoint pairs of neighbors, we must have $v_x = v_y = v_z =: v$. Observe, $v$ cannot be equal to $x,y,$ or $z$, so $|U| \geq 4$, which implies $|V(M^-)| < n-6$, contradicting the tightness of the above inequalities.

Thus, we may assume that $U$ does not contain $3$ vertices whose neighborhoods intersect $U$ in at most one vertex. Coupled with the condition that $U$ does not contain two disjoint pairs of neighbors, this implies that $|U| \leq 3$, and that the neighborhood of some vertex of $U$ intersects $U$ in two vertices. Thus, if $U$ is not empty, then $U = \{x,y,z\}$ and $d_2(x,y), d_2(x,z)$ are strictly positive. 

We shall again define an auxiliary multigraph $A$ as above and find an $M$-extender for $x,y,z$. Again, our first goal will be to find a $3$-edge $e$ which admits a labelling of its vertices as $x',y',z'$ such that $x' \in V_x, y' \in V_y, z' \in V_z$. Now, since $x,y,z$ may have two neighbors in $U$, our initial bound is that $|V_x|, |V_y|,$ and $|V_z|$ are at least $\frac{2n}{3} -2 = \frac{2(n-3)}{3}$. However, observe that since $d_2(x,y) \geq \frac{2n}{3} - 1$ and $N(x,y)$ does not include $z$, we in fact have $|V_x| \geq \frac{2n}{3} - 1$ and $|V_y| \geq \frac{2n}{3} - 1$. Analogously, examining $N(x,z)$ shows that $|V_z| \geq \frac{2n}{3} - 1$ 
Thus, we will again have by averaging that there is some $e \in M$ with $f(e) \geq 7$, and there is a labeling $x', y', z'$ of the vertices of edge $e$ such that $x' \in V_x$, $y' \in V_y$, and $z' \in V_z$.

We define $M^-, V_x^-, V_y^-, V_z^-,$ and $f^-$ as before. Let $U^+ = \{x,y,z, x',y',z'\}$. Again, 
\[\sum_{e \in M^-} f^-(e) = |V_x^-| + |V_y^-| + |V_z^-|,\]
and if $\sum_{e \in M^-} f^-(e) > 3 \cdot \frac{2(n-6)}{3}$, then there exists $e \in M^-$ with $f^-(e) > 6$ which forms an $M$-extender with $x'y'z'$. Thus, we will be done if we show that $|V_x^-| + |V_y^-| + |V_z^-| >2(n-6)$. Since $\delta_2^+(H) \geq \frac{2n}{3} - 1$, we have 
\[\sum_{v \in \{x,y,z\}} |N(v,v')| \geq 2n - 3,\]
so we will be done if 
\[\sum_{v \in \{x,y,z\}} |N(v,v') \cap U^+| < 9,\]
since $V_v^- = N(v,v') \setminus U^+$. 

Now, for each $v \in \{x,y,z\}$, we have $|N(v,v') \cap U^+| \leq 4$, since $N(v,v')$ does not contain either $v$ or $v'$. We first show that if any $|N(v,v') \cap U^+|$ is equal to $4$, then the desired average holds.

Suppose without loss of generality that $|N(x,x') \cap U^+| = 4$. Then $N(x,x')$ contains all four of $y,y',z,z'$. If some $3$-edge of $H$ is spanned by $y,y',z,z'$, then $U^+$ contains two vertex disjoint $3$-edges. These $3$-edges can be added to $M^-$ to create a perfect matching of $H$, contradicting the maximality of $M$. Thus, if $|N(x,x') \cap U^+| = 4$, then $|N(y,y') \cap U^+| \leq 2$ and $|N(z,z') \cap U^+| \leq 2$. The sum of the three is thus less than $9$, as desired.

Hence, we may assume that none of the three intersections has size greater than $3$. Thus, if one of the three intersections has size at most $2$, we are done. We now show that this must occur.

Suppose to the contrary all intersections are of size $3$. Thus, without loss of generality, $N(x,x')$ contains $z,z'$, and one of $y,y'$. Suppose (again, without loss of generality) that $y \in N(x,x')$. Now, if $y' \in N(z,z')$, then $xx'y$ and $zz'y'$ form a matching in $U^+$ which extends $M^-$ to a perfect matching. This cannot occur, so we must have $N(z,z') \cap U^+ = \{x,x',y\}$. Note that $x,x', z,z'$ induce a clique. Now, consider $N(y,y')$. Any $3$-edge in $U^+$ containing $y,y'$ can now be paired with an edge from the clique spanned by $x,x',z,z'$ to form a matching of $U^+$, again extending $M^-$ to a perfect matching.
\end{proof}

Next, we consider perfect matchings in $r$-graphs. Using Construction~\ref{main construction} again, we observe that $H_{U,V}^r$ with $|V| = \lfloor \frac{n}{r} + 1 \rfloor$ does not contain a perfect matching and has minimum positive co-degree $\lceil \frac{(r-1)n}{r} \rceil - r + 1$. We conjecture that the general positive co-degree threshold for perfect matchings in $r$-graphs is $\delta_{r-1}^+(H) = \left(\frac{r-1}{r}\right) n - r + 2$. Note that for $r = 3$, this is the threshold given by Theorem \ref{threePM}. The structural analysis required to obtain the exact result of Theorem \ref{threePM} does not seem easy to generalize; in particular, as $r$ grows, there are many more possible configurations of positive co-degree sets among the unmatched vertices to consider. However, at the cost of a slightly worse bound, we are able to avoid such case analysis and obtain the following theorem, which is best possible up to the additive constant. We remark that the bound on $n$ is best possible in this regime, as no non-empty $r$-graph on fewer vertices will satisfy this minimum positive co-degree constraint.

\rPM*

\begin{proof}

    Let $H$ be any $r$-graph satisfying the above conditions and let $M$ be a matching of maximum size in $H$. Define $U$ to be the vertices of $H$ not contained in any edge of $M$, i.e., $U=V(H)\setminus V(M)$. Our goal is to show that no vertices are contained in $U$.

    Towards a contradiction, suppose that $U$ is nonempty. We define the following function $f$ on pairs $(S,e)$ of subsets $S\subseteq U$ and edges $e\in H$:
    \[
    f(S,e) = \left|\{v\in e |  \text{ $v$ is contained in an edge with $S$}\}\right|.
    \]
    With this in hand, we present the following lemma.

    \begin{lemma}\label{matchinglemma}
        Let $M$ be a matching in an $r$-uniform hypergraph $H$ and let $U$ be the vertices unmatched by $M$. Let $S_1,S_2,\ldots, S_k$ be disjoint sets in $U$ each of size $\ell$, $r>\ell>0$ such that $k \leq r$ and for each $S_i$, $1\leq i \leq k$, $\sum_{e\in M}f(S_i,e) \geq (\frac{r-1}{r})n$. Then there exists an edge $e\in M$ containing distinct vertices $v_1,v_2,\ldots, v_k$ such that $S_i\cup\{v_i\}$ is contained in an edge of $H$ for all $1\leq i \leq k$.
    \end{lemma}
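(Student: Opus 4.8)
The plan is to locate the required edge $e\in M$ by a double-counting argument over $M$, and then read off the vertices $v_1,\dots,v_k$ from a system of distinct representatives supplied by Hall's theorem (equivalently, K\"onig's theorem). For each $e\in M$, I would introduce the auxiliary bipartite graph $B_e$ with parts $\{S_1,\dots,S_k\}$ and $V(e)$, joining $S_i$ to $v\in V(e)$ precisely when $S_i\cup\{v\}$ is contained in an edge of $H$. Since $S_i\subseteq U$ is disjoint from $V(M)\supseteq V(e)$, we have $\deg_{B_e}(S_i)=f(S_i,e)$, and a matching of $B_e$ saturating the part $\{S_1,\dots,S_k\}$ is exactly a choice of distinct $v_1,\dots,v_k\in e$ with $S_i\cup\{v_i\}$ contained in an edge of $H$; such a matching can exist because $k\le r=|V(e)|$.

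The local claim is that $\sum_{i=1}^{k}f(S_i,e)\ge (k-1)r+1$ forces $B_e$ to have such a saturating matching. To prove it, suppose not; then by K\"onig's theorem $B_e$ has a vertex cover of the form $A\cup J$ with $A\subseteq V(e)$, $J\subseteq\{S_1,\dots,S_k\}$, and $|A|+|J|\le k-1$. Every $S_i\notin J$ then has all of its $B_e$-neighbours inside $A$, so $f(S_i,e)\le|A|$, and there are at least $k-|J|\ge |A|+1$ such indices. Writing $a=|A|\le k-1$, this gives $\sum_i f(S_i,e)\le a(a+1)+(k-a-1)r$; since $a+1\le k\le r$ we have $a(a+1)\le ar$, so the right side is at most $ar+(k-a-1)r=(k-1)r$ — a contradiction.

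For the global step I would sum the hypothesis over $i$ to get $\sum_{e\in M}\sum_{i=1}^{k}f(S_i,e)=\sum_{i=1}^{k}\sum_{e\in M}f(S_i,e)\ge k\cdot\frac{r-1}{r}n$. Note that $M\ne\emptyset$ (otherwise this would force $n=0$), and $|M|=|V(M)|/r=(n-|U|)/r$. If no edge met the local threshold, i.e. $\sum_i f(S_i,e)\le (k-1)r$ for every $e\in M$, then $k\cdot\frac{r-1}{r}n\le (k-1)r\cdot|M|=(k-1)(n-|U|)$, which rearranges to $(k-1)|U|\le \frac{k-r}{r}\,n\le 0$ because $k\le r$. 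Since $\emptyset\ne S_1\subseteq U$, this is impossible once $k\ge 2$; and $k=1$ is immediate, as any $e\in M$ with $f(S_1,e)\ge 1$ works and such an edge exists because $\sum_{e\in M}f(S_1,e)>0$. Hence some $e\in M$ satisfies $\sum_i f(S_i,e)\ge (k-1)r+1$, and the local claim applied to this $e$ completes the proof.

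The step I expect to be the crux is the local claim, and in particular pinning down the threshold $(k-1)r+1$: a naive greedy assignment needs $f(S_i,e)\ge k$ for every $i$, which averaging cannot guarantee when $k$ is close to $r$, whereas the K\"onig/Hall formulation is just sharp enough to match the hypothesis $\sum_{e\in M}f(S_i,e)\ge\frac{r-1}{r}n$. The remaining things to watch are the boundary cases $k=1$ and $M=\emptyset$, and the bookkeeping that $S_i\cap V(M)=\emptyset$ so that $f(S_i,e)$ genuinely equals the $B_e$-degree of $S_i$.
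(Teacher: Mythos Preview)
Your proof is correct and follows essentially the same approach as the paper: double-count $\sum_{i,e} f(S_i,e)$ to locate a single edge $e\in M$ on which the auxiliary bipartite graph is dense enough, then extract the desired system of distinct representatives via Hall/K\"onig. The only cosmetic differences are that the paper verifies Hall's condition directly (rather than via K\"onig's cover formulation) and uses the slightly different threshold $\sum_i f(S_i,e)>k(r-1)$ obtained from $|M|<n/r$, whereas you use the equivalent-in-effect threshold $(k-1)r+1$; both suffice since $k\le r$.
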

    \begin{proof}
        Fix $k\leq r$ and let $S_1,S_2,\ldots, S_k$ be such a collection of sets. Then we have
        \[
        \sum_{i=1}^k\sum_{e\in M}f(S_i,e) \geq k\left(\frac{r-1}{r}\right)n.\] As there are fewer than $\frac{n}{r}$ edges in $M$, at least one edge $e:=v_1v_2 \dots v_r$ contributes strictly more than $k(r-1)$ to the sum. Construct an auxiliary bipartite graph with one class, $X$, being the sets $S_1,S_2,\ldots, S_k$, and the other class, $Y$, being the vertices of this edge $e$, with edges whenever $S_i\cup\{v_j\}$ is contained in an edge of $H$. We show that this graph contains an $X$-saturating matching, i.e., a matching in for which each vertex of $X$ is in an edge:

        We see first that this auxiliary graph contains more than $k(r-1)$ edges, so no vertex in $Y$ is isolated. Since $k\leq r$, we have $k(r-1)\geq (k-1)r$, implying no vertex of $X$ is isolated either. Let $W$ be any subset of $X$. The above two facts show that if $|W|=1$ or $|W|=k$, $|W|\leq |N(W)|$. Now, suppose $W$ is not of these two sizes, and assume by way of contradiction that $|W|=i>|N(W)|$. Then $|N(W)|\leq i-1$, so the number of edges incident to $W$ is at most $i(i-1)=i^2-i$. There are at most $(k-i)r$ incident to vertices in $X\setminus W$. Our auxiliary graph has more than $k(r-1)$ edges, and one can see that
        \[
        k(r-1)-(i^2-i)-(k-i)r=kr-k-i^2+i-kr+ir = i(r+1)-i^2-k=i(r+1-i)-k.
        \]
        Because $k\leq r,$ $i(r+1-i)-k\geq i(r+1-i)-r=(i-1)(r-i)$. Since we have assumed $i>1$ and $i<k \leq r$, both $i-1$ and $r-i$ are positive. So $(i-1)(r-i)>0$. This gives us $(i^2-i)+(k-i)r<k(r-1)$, a contradiction.
    \end{proof}

    We now prove the following claim:
    \begin{claim}
        There do not exist $r$ vertices in $U$ such that each is contained in an edge of $H$.
    \end{claim}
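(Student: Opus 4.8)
The plan is to prove the claim by contradiction, after first noting that it is in fact equivalent to ``$M$ is a perfect matching'', so that proving it finishes Theorem~\ref{rPM}. Indeed, since $d_1(v)>0$ for every $v\in V(H)$, \textit{every} vertex of $H$ lies in an edge, so ``no $r$ vertices of $U$ each lying in an edge of $H$'' says exactly that $|U|<r$; since $r\mid n$ and $r\mid|V(M)|$ we have $r\mid|U|$, whence $|U|=0$. So suppose instead that $v_1,\dots,v_r\in U$ are distinct, and aim to rearrange a few edges of $M$ together with $v_1,\dots,v_r$ into a strictly larger matching.

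Before doing this I would record two consequences of $\delta_{r-1}^+(H)\ge\frac{r-1}{r}n+r^2$. First, any set $T$ with $|T|<r$ that is contained in some edge of $H$ is contained in at least $\frac{r-1}{r}n+r^2$ edges of $H$: pick an edge $g\supseteq T$ and $a\in g\setminus T$, then $g\setminus\{a\}$ has positive co-degree (witnessed by $a$), hence co-degree at least $\delta_{r-1}^+(H)$, and each resulting edge contains $T$; in particular $\delta_1(H)\ge\frac{r-1}{r}n+r^2$, and, discounting any pool of at most $r^2$ ``forbidden'' vertices, $T$ still has at least $\frac{r-1}{r}n$ generalized extensions outside that pool, which is exactly what Lemma~\ref{matchinglemma} needs. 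Second, using that $U$ is independent: if $\mu$ is the maximum of $|e\cap U|$ over edges $e$, and $g$ is an edge attaining it, then an $(r-1)$-subset $T$ of $g$ with $g\cap U\subseteq T$ has every one of its $\ge\frac{r-1}{r}n+r^2$ extensions lying in $V(M)$ (an extension into $U$ would give an edge with $\mu+1$ vertices in $U$), so $|V(M)|\ge\frac{r-1}{r}n+r^2$ and hence $|U|\le\frac{n}{r}-r^2$. The bound $n\ge r^3+r^2-r$, i.e.\ $\frac{n}{r}\ge r^2+r-1$, is exactly what makes the hypothesis and this bound nonvacuous and ensures $M$ has enough edges for what follows.

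The augmentation I would carry out by iterating Lemma~\ref{matchinglemma}. Starting from the $r$ disjoint singletons $\{v_i\}\subseteq U$ — each contained in an edge, hence with at least $\frac{r-1}{r}n$ extensions into $V(M)$ after discounting the at most $r^2$ already-used vertices — Lemma~\ref{matchinglemma} with $k=r$, $\ell=1$ produces an edge $e_1\in M$ and a pairing of its vertices $w_1,\dots,w_r$ with $v_1,\dots,v_r$ so that each $\{v_i,w_i\}$ lies in an edge of $H$. Delete $e_1$ from the matching; the partial edges $\{v_i,w_i\}$ are pairwise disjoint, each lies inside the current uncovered set (so the ``in $U$'' hypothesis is preserved), and each is contained in an edge of $H$, so Lemma~\ref{matchinglemma} applies again with $\ell=2$, and so on. After $r-1$ rounds the partial edges have size $r$, and at that last round ``contained in an edge'' means ``is an edge''; replacing the $r-1$ deleted edges of $M$ by these $r$ edges of $H$ yields a matching of size $|M|+1$, a contradiction. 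When $|U|$ is too large for the accounting to close over $r-1$ rounds, one instead starts the partial edges as $(r-m)$-subsets of $U$ and peels only $m$ edges of $M$, trading rounds of peeling for vertices of $U$; and the case $\mu=1$ (no edge meets $U$ in two vertices) is special, since then no partial edge $P_i$ can extend into $U$, so the accounting closes regardless of $|U|$.

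I expect the main obstacle to be precisely this accounting. Each peeled edge removes another $r$ vertices from the pool, so before round $t$ one must discount $|U|+(t-1)r$ vertices while the hypothesis supplies only $r^2$ of slack; making the argument go through uniformly requires coupling the number of rounds (equivalently, how many vertices of $U$ go into each new edge) to $|U|$, using $|U|\le\frac{n}{r}-r^2$ together with a short case split on $\mu$ — when $\mu=1$ the iteration works for all $|U|$, and when $\mu\ge2$ one uses edges meeting $U$ in many vertices to supply the required disjoint $(r-m)$-subsets of $U$ directly. Checking that these cases together cover every admissible value of $|U|$, and that the needed $(r-m)$-subsets genuinely lie in edges of $H$, is where the $r^2$ in the hypothesis is consumed, and is also why the bound here is worse than the conjectured $\frac{r-1}{r}n-r+2$; tightness of the result still follows from Construction~\ref{main construction}.
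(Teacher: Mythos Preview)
Your overall framework---iterate Lemma~\ref{matchinglemma} on $r$ growing ``partial edges'' until each becomes an edge of $H$---matches the paper. The gap is exactly where you flag it: after the first round, you assert that ``Lemma~\ref{matchinglemma} applies again with $\ell=2$,'' but you never verify its hypothesis $\sum_{e\in M'}f(S_i,e)\ge\frac{r-1}{r}n$. Knowing $S_i$ lies in an edge gives you $\ge\frac{r-1}{r}n+r^2$ vertices $w$ with $S_i\cup\{w\}$ in an edge, but many of those $w$ may lie in the current uncovered set $U'$, not in $V(M')$; discounting only the $\le r^2$ vertices in the partial edges does not control this. Your own estimate shows the shortfall: before round $t$ you must discard up to $|U|+(t-1)r$ vertices, and with $|U|$ as large as $\frac{n}{r}-r^2$ the bound on extensions into $V(M')$ drops to roughly $\frac{r-2}{r}n$, which is not enough.

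Your proposed rescue via a case split on $\mu$ handles $\mu=1$ correctly (then no $S_i$ can extend into the original $U$, and the $\le(r-1)r<r^2$ vertices of deleted edges are absorbed by the slack). But the $\mu\ge2$ branch is only a gesture: ``using edges meeting $U$ in many vertices to supply the required disjoint $(r-m)$-subsets of $U$'' needs $r$ pairwise disjoint such subsets, and a single edge $g$ with $|g\cap U|=\mu$ does not produce them. The paper avoids this entire case analysis with a single device you are missing: \emph{before each application of the lemma}, greedily extend every $S_i$ inside $U'\setminus\mathcal{S}$ until no further extension is possible. After this saturation step, no $S_i$ has any extension in $U'\setminus\mathcal{S}$, so every extension of $S_i$ lies either in $\mathcal{S}$ (at most $r^2$ vertices) or in $V(M')$, giving $\ge\frac{r-1}{r}n$ extensions into $V(M')$ as required. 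Because the greedy step can make the $S_i$'s unequal in size, the paper applies the lemma only to those of current minimum size (allowed since $k\le r$), and the process still terminates in at most $r-1$ rounds. This greedy saturation is the idea that makes the $r^2$ slack sufficient and replaces your incomplete $\mu$-split.
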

    \begin{proof}[Proof of Claim]
        Towards a contradiction, assume that $v_1, v_2,\ldots, v_r$ are distinct vertices in $U$ such that each is contained in an edge of $H$. Define $\mathcal{S}=\{S_1,\cup\dots\cup S_r\}$, where $S_1=\{v_1\},\dots, S_{r}=\{v_r\}$. Greedily add vertices one-by-one from $U\setminus \mathcal{S}$ to $S_1$ such that at each point, $\{v\}\cup S_1$ is contained in an edge of $H$. Repeat this for $S_2,\ldots, S_r$. One can easily check that, by our co-degree condition, each $S_i$ is in an edge with at least $(\frac{r-1}{r})n+r^2$ other vertices. At most $r^2$ vertices are in $\mathcal{S}$, and no $S_i$ is in an edge with a vertex in $U\setminus \mathcal{S}$. Combining this with our assumption, each $S_i$ is in an edge with at least $(\frac{r-1}{r})n$ vertices in edges of $M$. Indeed, each $S_i$ satisfies $\sum_{e\in M}f(S_i,e) \geq (\frac{r-1}{r})n$. We define the following procedure:

        Start with $M'=M, U'=U$. While each $S_i$ is of size less than $r$, take the $S_i$'s in $\mathcal{S}$ of least size, i.e., those of minimal cardinality. Without loss of generality, suppose these sets are $S_1,S_2,\ldots, S_j$. Applying Lemma~\ref{matchinglemma}, we can find an edge $e\in M'$ containing distinct vertices $v_1,v_2,\ldots, v_j$ such that $S_i\cup\{v_i\}$ is contained in an edge of $H$ for all $1\leq i \leq j$. Remove edge $e$ from $M'$ and replace $S_i$ with $S_i\cup\{v_i\}$ for each of $S_1,S_2,\ldots, S_j$. Add to $U'$  the vertices of $e$. Now, greedily add vertices from $U'\setminus \mathcal{S}$ to $S_1$ such that $S_1\cup\{v\}$ is in an edge of $H$, and then repeat for $S_2,\ldots, S_r$. After doing so, no $S_i$ is in an edge with a vertex in $U\setminus \mathcal{S}$, and there are at most $r^2$ vertices in $\mathcal{S}$. So each $S_i$ is in an edge with at least $(\frac{r-1}r)$ vertices of edges in $M$.

        This process clearly terminates in at most $r-1$ steps. Once this process terminates, $\{S_1,S_2, \ldots ,S_{r}\}$ will be a set of $r$ many edges of $H$ that are disjoint from each other, and disjoint from all edges of $M'$. $M'$ will have lost at most $r-1$ edges while running the procedure, so $M'$ combined with our sets $S_1,S_2,\ldots, S_{r}$ is a matching with one more edge than $M$, a contradiction.
    \end{proof}
    The above claim yields the theorem: observe that, as the number of vertices in $H$ is divisble by $r$, and $H$ has no isolated vertices, the claim implies $M$ is a perfect matching.
\end{proof}

\section{Berge Hamiltonian cycles}\label{berge section}

We now consider Berge Hamiltonian cycles. Firstly, we claim that taking Construction~\ref{main construction} with $|V| = \lceil \frac{n + 1}{2} \rceil$ yields an example of a $3$-graph $H$ with $\delta_2^+(H) = \lfloor \frac{n + 1}{2} \rfloor - r + 2$ and no Berge Hamiltonian cycle. Suppose for a contradiction that $H$ contains some Berge Hamiltonian cycle $C$, and let $v_1, v_2, \dots, v_n, v_1$ be the associated cyclic ordering of $V(H)$. Since $|V| > \frac{n}{2}$, any cyclic ordering of the vertices of $H$ must include an adjacent pair of vertices from $V$, say $v_i$ and $v_{i+1}$. Since $C$ is a Berge Hamiltonian cycle, there exists a $3$-edge $e_i$ of $H$ with $v_i, v_{i+1} \in e_i$, a contradiction, since $V$ is strongly independent.

Our next theorem, restated from the introduction, shows that Construction~\ref{main construction} is again tight.

\bergetheorem*

\begin{proof}
We shall in fact prove a somewhat stronger statement: namely, that if $H$ satisfies the hypotheses of the theorem, then $H$ contains a Berge Hamiltonian cycle
\[C = v_1e_1v_2e_2\dots v_{n-1}e_{n-1}v_ne_nv_1\]
such that for any $i \neq j$, $e_{i}$ contains at most one of $v_j, v_{j+1}$. We proceed by induction on $r$. 
Suppose $r=2$. Then $n\ge 3$ and $\delta_{r-1}^+(H)  = \delta(H) \ge \frac{n}{2}$. By Dirac's Theorem, we have a (Berge) Hamiltonian cycle which vacuously satisfies the condition on each $e_i$. 

Now suppose $r > 2$ and that the theorem holds for $(r-1)$-graphs. Let $H$ be an $r$-graph satisfying the conditions of the theorem statement.
Let $H':=S(H)$ be the shadow graph of $H$.
Observe, since $H$ contains no isolated vertices, neither does $H'$. Also observe that $\delta_{r-1}^+(H) \geq \frac{n}{2} - r + 2$ implies $\delta_{r-2}^+(H') \geq \frac{n}{2} - (r-1) + 2$. Thus, by the inductive hypothesis, $H'$ contains a Berge Hamiltonian cycle, say
\[C' = v_1 e_1' v_2 \dots v_n e_n' v_1.\]
We wish to translate $C'$ into a Berge Hamiltonian cycle $C$ of $H$. We do so by replacing each $e_i'$ with an $r$-edge $e_i$ containing the vertices of $e_i'$.

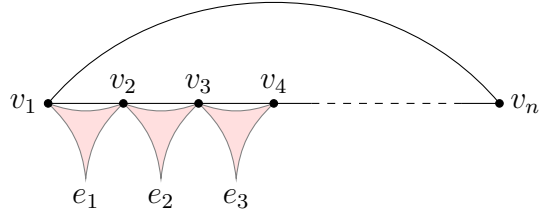
\begin{figure}[h!]
\begin{center}
\begin{tikzpicture}


    \draw[fill=pink,opacity=0.5]
(0,0) to[bend right=20] (1,0) to[bend right=20] (0.5,-1) to[bend right = 20] cycle
;

    \draw[fill=pink,opacity=0.5]
(1,0) to[bend right=20] (2,0) to[bend right=20] (1.5,-1) to[bend right = 20] cycle
;

    \draw[fill=pink,opacity=0.5]
(2,0) to[bend right=20] (3,0) to[bend right=20] (2.5,-1) to[bend right = 20] cycle
;



\draw (0.5, -1) node[below]{$e_1$};
\draw (1.5, -1) node[below]{$e_2$};
\draw (2.5, -1) node[below]{$e_3$};

\filldraw (0,0) circle (0.05 cm);
\draw (0,0) node[left]{$v_1$};

\filldraw (1,0) circle (0.05 cm);
\draw (1,0) node[above]{$v_2$};

\filldraw (2,0) circle (0.05 cm);
\draw (2,0) node[above]{$v_3$};

\filldraw (3,0) circle (0.05 cm);
\draw (3,0) node[above]{$v_4$};

\filldraw (6,0) circle (0.05 cm);
\draw (6,0) node[right]{$v_n$};

\draw[dashed] (3.5,0) -- (5.5,0); 

\draw (0,0) -- (3.5,0);

\draw (5.5,0) -- (6,0);

\draw (0,0) to[bend left = 50] (6,0);

\end{tikzpicture}
\caption{Extending from an $r=2$ Hamiltonian cycle to  an $r=3$ Berge Hamiltonian cycle}
\end{center}
\end{figure}

We must make these replacements in such a way that, for $i \neq j$, $e_i$ contains at most one of $v_j, v_{j+1}$. Note that the vertices of $e_i'$ indicate at most $2(r-3) + 2 = 2r - 4$ vertices with which we may not extend $e_i'$: namely, $e_i$ must not contain $v_{i-1}$ or $v_{i+2}$, and, if $v_j \in e_i'$, then $e_i$ must not contain $v_{j-1}$ or $v_{j+1}$. Using the bound on $n$ and the condition on $\delta_{r-1}^+(H)$, we know that $\delta_{r-1}^+(H) \geq 2r -3$, so each $e_i'$ can be extended into an $r$-edge $e_i$ which satisfies the desired condition. Note that the condition that $e_i$ contains at most one of $v_j,v_{j+1}$ immediately implies that $e_i \neq e_j$ for all $i \neq j$. Thus, 
\[C = v_1e_1v_2\dots v_ne_nv_1\]
is a Berge Hamiltonian cycle in $H$.
\end{proof}

\section{Loose Hamiltonian cycles}\label{loose cycle section}

Finally, we consider loose Hamiltonian cycles in $3$-graphs. Observe that a loose Hamiltonian $3$-cycle yields a cyclic vertex ordering with the property that any two vertices adjacent in the order have positive co-degree. Thus, Construction~\ref{main construction} with $|V| = \lceil \frac{n + 1}{2} \rceil$ again provides an example of a $3$-graph with $\delta_2^+(H)$ approximately $\frac{n}{2}$ and no loose Hamiltonian cycle. 

While we do not obtain an exactly matching upper bound, the main theorem of this section, restated from the introduction, shows that the above example is asymptotically best possible.

\loosethm*

We shall prove Theorem \ref{looseHC} via the absorbing method; for similar applications, see \cite{BHScycle,handissertation,Rodl_2017,Reiher_2019_mindegree},  Before beginning the proof, we briefly summarize the method and the lemmas which we shall need. 

We define a \textit{($3$-uniform) loose path} of length $k$ as a set of $k$ $3$-edges 
\[v_1v_2v_3,v_3v_4v_5, \dots, v_{2k-1}v_{2k}v_{2k+1}\]
such that $v_i\neq v_j$ for all $i\neq j$. Since we work only with $3$-uniform hypergraphs in this section, we shall for conciseness refer to a $3$-uniform loose path (of length $k$) simply as a ($k$-vertex) loose path.

The absorbing method works roughly as follows. Given an $n$-vertex $3$-graph $H$, with $n$ sufficiently large and $\delta_2^+(H) \geq \left( \frac{1}{2} + \varepsilon \right)n$, we wish to find three things: an \textit{absorber}, a \textit{reservoir}, and an \textit{almost path-tiling} of bounded size. The absorber is a relatively short loose path $P$ with the property that, given any set $Q \subset V(H)$ which is relatively small and has the appropriate parity, there exists a loose path on $V(P) \cup Q$ which has the same endpoints as $P$. The reservoir is a relatively small set of vertices with a particular connecting property: given a small number of pairs $a_i,b_i$ of vertices in $H$, we can find pairwise disjoint sets of vertices in the reservoir which connect $a_i$ and $b_i$ via a loose path. Finally, the almost path-tiling will be a set of boundedly many loose paths which are pairwise disjoint and span all but some small proportion of the vertices in $H$. 

Once we have shown that we can find an absorber, a reservoir, and an almost path-tiling, we proceed as follows. Using the fact that our absorber and reservoir can be made very small, we can actually guarantee the existence of an absorber, a reservoir, and an almost path-tiling which are pairwise disjoint. We use the reservoir to connect the endpoints of the resulting set of boundedly many, pairwise disjoint loose paths, thus creating an almost-spanning loose cycle. Finally, any uncovered vertices can be absorbed to create a loose Hamiltonian cycle.

Due to the large number of auxiliary techniques and results required, we further divide this section into subsections, as follows. In Subsection~\ref{absorber subsection}, we state and prove our absorbing and reservoir lemmas, which largely follow from work of Buß, Han, and Schacht in~\cite{BHScycle}. In Subsection~\ref{tiling subsection}, we prove our almost path-tiling lemma, adapting the approaches of Buß, Han, and Schacht in~\cite{BHScycle} and Han in~\cite{handissertation} by making use of an auxiliary tiling lemma and some hypergraph regularity techniques which we translate to the positive co-degree setting. In Subsection~\ref{main thm subsection}, we combine the lemmas from Subsections~\ref{absorber subsection}
and ~\ref{tiling subsection} to prove Theorem~\ref{looseHC}.

\subsection{Absorbing and reservoir lemmas}\label{absorber subsection}

We begin by stating two lemmas: a connecting lemma, which will be needed for finding our absorber, and our reservoir lemma. These lemmas are presented without proof, because they directly follow from~\cite{BHScycle}. The statements of both lemmas are shown to hold for $3$-graphs with minimum vertex degree at least $\left(\frac{1}{4} + \varepsilon\right) \binom{n}{2}$; note that a $3$-graph with minimum positive co-degree $\left( \frac{1}{2} + \varepsilon \right)n$ and no isolated vertices satisfies this vertex degree bound.

We say that a set of triples $(x_i,y_i,z_i)_{i\in [k]}$ \textit{connects} $(a_i,b_i)_{i\in [k]}$ if
\begin{itemize}
    \item $\bigcup_{i\in [k]}|\{a_i,b_i,x_i,y_i,z_i\}|=5k$, i.e., the pairs and triples are all disjoint, and
    \item for all $i\in [k]$ we have $\{a_i,x_i,y_i\},\{y_i,z_i,b_i\}\in H$
\end{itemize}

\begin{figure}[h]
\begin{center}
\begin{tikzpicture}[scale = 1.3]



    \draw[fill=pink,opacity=0.5]
(0,0) to[bend right=50] (1,0) to[bend right=50] (2,0) to[bend left = 50] cycle
;

    \draw[fill=pink,opacity=0.5]
(-2,0) to[bend left=50] (-1,0) to[bend left=50] (0,0) to[bend right = 50] cycle
;

\filldraw (0,0) circle (0.05 cm);
\filldraw (1,0) circle (0.05 cm);
\filldraw (-1,0) circle (0.05 cm);
\draw (1,0) node[above right]{$z$};
\draw (0,0) node[above right]{$y$};
\draw (-1,0) node[below left]{$x$};

\filldraw (2,0) circle (0.05 cm);
\filldraw (-2,0) circle (0.05 cm);
\draw (2,0) node[above right]{$b$};
\draw (-2,0) node[above left]{$a$};

\end{tikzpicture}

\caption{$(x,y,z)$ connects $(a,b)$}
\end{center}
\end{figure}
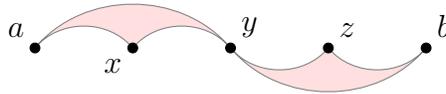

\begin{restatable}[Buß-Han-Schacht~\cite{BHScycle}]{lemma}{connecting}\label{connecting}
Let $\gamma>0$, let $m\geq 1$ be an integer, and let $H=(V,E)$ be a $3$-uniform hypergraph on $n$ vertices with $\delta_1(H)\geq (\frac{1}{4}+\gamma)\binom{n}{2}$ and $m\leq \gamma n /12$. For every set $(a_i,b_i)_{i\in [m]}$ of mutually disjoint pairs of distinct vertices, there exists a set of triples $(x_i,y_i,z_i)_{i\in [m]}$ connecting $(a_i,b_i)_{i\in [m]}$
\end{restatable}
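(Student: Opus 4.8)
The plan is to observe that Lemma~\ref{connecting} is precisely the minimum vertex-degree connecting lemma of Bu\ss{}, Han, and Schacht, so it may be quoted verbatim from \cite{BHScycle}; and, as already noted, a $3$-graph with $\delta_2^+(H)\ge(\tfrac12+\varepsilon)n$ and no isolated vertices satisfies $\delta_1(H)\ge(\tfrac14+\gamma)\binom n2$ for a suitable $\gamma=\gamma(\varepsilon)>0$, which is the form in which we apply it. For completeness, here is how I would prove it directly.

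I would argue greedily, choosing the triples $(x_1,y_1,z_1),(x_2,y_2,z_2),\dots$ in turn. When it is time to choose $(x_i,y_i,z_i)$, let $F$ be the set consisting of all $2m$ endpoints $a_1,b_1,\dots,a_m,b_m$ together with the $3(i-1)$ internal vertices chosen so far, so $|F|\le 2m+3(i-1)<5m\le\tfrac{5\gamma n}{12}$, and set $W:=V(H)\setminus F$ (note $a_i,b_i\in F$). Any three distinct $x,y,z\in W$ with $\{a_i,x,y\},\{y,z,b_i\}\in E(H)$ will do, since such a triple is automatically disjoint from everything previously committed.

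The core of the argument is a density estimate for link graphs. Both $L(a_i)$ and $L(b_i)$ have more than $\tfrac14\binom n2$ edges, so after deleting the fewer than $\tfrac{5\gamma n}{12}$ vertices lying outside $W$, the induced link graphs $L(a_i)[W]$ and $L(b_i)[W]$ each still have more than $\tfrac18 n^2$ edges, with a surplus of order $\gamma n^2$ (this is where $m\le\gamma n/12$ is used, and why $n$ must be large compared with $1/\gamma$). Since a graph with $M$ edges has more than $\sqrt{2M}$ non-isolated vertices, the set $Y_a$ of non-isolated vertices of $L(a_i)[W]$ and the set $Y_b$ of non-isolated vertices of $L(b_i)[W]$ each have size at least $\tfrac n2+c\gamma n$ for an absolute constant $c>0$; in particular they intersect. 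If some $y\in Y_a\cap Y_b$ has degree at least $2$ in $L(a_i)[W]$ or in $L(b_i)[W]$, or has distinct neighbourhoods in the two link graphs, then I can choose $x\in N_{L(a_i)}(y)\cap W$ and $z\in N_{L(b_i)}(y)\cap W$ with $x\ne z$, and we are finished.

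The step I expect to be the main obstacle is the remaining, degenerate case, in which every $y\in Y_a\cap Y_b$ has a unique neighbour $x_y$ in $L(a_i)[W]$ that is also its unique neighbour in $L(b_i)[W]$. The resolution is structural: then $x_y\in Y_a\cap Y_b$ and $x_{x_y}=y$, so $y\mapsto x_y$ is an involution and the edges of $L(a_i)[W]$ incident to $Y_a\cap Y_b$ form a matching inside $Y_a\cap Y_b$; hence all but at most $\tfrac n2$ edges of $L(a_i)[W]$ lie inside $Y_a\setminus Y_b$, which by the same square-root estimate forces $|Y_a\setminus Y_b|\ge\tfrac n2+c\gamma n$, and symmetrically $|Y_b\setminus Y_a|\ge\tfrac n2+c\gamma n$. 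But $Y_a\setminus Y_b$ and $Y_b\setminus Y_a$ are disjoint subsets of $W$, so $|W|>n$, a contradiction. Thus the degenerate case never arises, the greedy step always succeeds, and the lemma follows. Everything except this dichotomy is routine bookkeeping and arithmetic; the dichotomy is precisely the point requiring care in \cite{BHScycle}.
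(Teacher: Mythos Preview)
Your opening observation is exactly what the paper does: Lemma~\ref{connecting} is stated with attribution to Bu\ss, H\`an, and Schacht and is explicitly \emph{presented without proof}, since it follows directly from~\cite{BHScycle}. So the citation alone already matches the paper's treatment, and you correctly note how the positive co-degree hypothesis feeds into the vertex-degree hypothesis of the quoted lemma.

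Your supplementary direct argument goes beyond what the paper provides and is essentially sound. The greedy framework with the forbidden set $F$, the edge-count lower bound on the restricted link graphs, and the square-root estimate $|Y_a|,|Y_b|>\tfrac{n}{2}+c\gamma n$ are all correct. The handling of the degenerate case is also right: if every $y\in Y_a\cap Y_b$ has a unique common neighbour $x_y$ in both restricted link graphs, then $x_y\in Y_a\cap Y_b$ and the map $y\mapsto x_y$ is an involution, so the edges of $L(a_i)[W]$ touching $Y_a\cap Y_b$ form a matching of size at most $\tfrac{n}{2}$; the remaining edges lie inside $Y_a\setminus Y_b$, and the same square-root count forces $|Y_a\setminus Y_b|,|Y_b\setminus Y_a|>\tfrac{n}{2}$, contradicting $|W|\le n$. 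One minor point: the lemma as stated carries no explicit ``$n$ large'' hypothesis, but the constraint $1\le m\le\gamma n/12$ already forces $n\ge 12/\gamma$, which is enough slack for your arithmetic to go through; it would be worth making the constants explicit if this were to stand as a self-contained proof.
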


\begin{restatable}[Buß-Han-Schacht~\cite{BHScycle}]{lemma}{reservoir}\label{reservoir}
For all $0<\gamma<1/4$ there exists an $n_0$ such that for every $3$-uniform hypergraph $H=(V,E)$ on $n>n_0$ vertices with minimum vertex degree $\delta_1(H)\geq (\frac{1}{4}+\gamma)\binom{n}{2}$ there is a set $R$ of size at most $\gamma n$ with the following property: For every system $(a_i,b_i)_{i\in [k]}$ consisting of $k\leq \gamma^3n/12$ mutually disjoint pairs of vertices from $V$ there is a triple system connecting $(a_i,b_i)_{i\in [k]}$ which, moreover, contains vertices from $R$ only.
\end{restatable}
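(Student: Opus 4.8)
The plan is to follow the standard reservoir argument of the absorbing method, essentially reproducing the proof of Bu\ss{}--Han--Schacht~\cite{BHScycle}: build $R$ by a random selection and then check that, with positive probability, $R$ is ``connecting-rich'' for \emph{every} pair of vertices at once. Fix a small constant $p=p(\gamma)\in(0,\gamma)$, say $p=2\gamma/3$, and let $R$ contain each vertex of $V$ independently with probability $p$; a Chernoff bound gives $|R|\le\gamma n$ with probability $1-o(1)$ once $n$ is large. The combinatorial input we need is a supersaturated version of Lemma~\ref{connecting}: whenever $\delta_1(H)\ge(\frac14+\gamma)\binom n2$, every ordered pair $(a,b)$ of distinct vertices is connected by at least $\mu n^3$ triples $(x,y,z)$, where $\mu=\mu(\gamma)>0$ can be taken of order $\gamma^2$. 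A weaker bound --- that each pair admits $\Omega(\gamma n)$ pairwise vertex-disjoint connecting triples --- follows immediately from Lemma~\ref{connecting} by repeatedly deleting the vertices of a connecting triple, since deleting at most $\frac{\gamma}{4}n$ vertices preserves the degree hypothesis with $\gamma$ halved; but this turns out to be too lossy for the stated range $k\le\gamma^3 n/12$, so the full $\Omega(n^3)$ estimate (which the counting arguments of~\cite{BHScycle} provide) is what we invoke.

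Given this count, fix a pair $(a,b)$ and let $X_{a,b}$ be the number of its connecting triples lying entirely inside $R$. Then $\mathbb{E}[X_{a,b}]\ge\mu p^3 n^3$, and flipping the membership of a single vertex changes $X_{a,b}$ by at most $O(n^2)$, since a fixed vertex lies in $O(n^2)$ connecting triples of a fixed pair. Hence McDiarmid's bounded-differences inequality gives $\Pr\big[X_{a,b}<\tfrac12\mu p^3 n^3\big]\le 2\exp(-\Omega(\mu^2 p^6 n))$. Taking a union bound over the fewer than $n^2$ pairs, we conclude that for $n\ge n_0$ there is a choice of $R$ with $|R|\le\gamma n$ such that every pair $(a,b)$ has at least $\tfrac12\mu p^3 n^3$ connecting triples with all three vertices in $R$. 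Fix such an $R$.

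It remains to connect an arbitrary system $(a_i,b_i)_{i\in[k]}$ of $k\le\gamma^3 n/12$ mutually disjoint pairs using only $R$, which we do greedily. Suppose triples $(x_j,y_j,z_j)$ have been chosen for $j<i$; then at most $3(i-1)<3k$ vertices of $R$ are committed, and each committed vertex lies in at most $3|R|^2\le 3(pn)^2$ of the connecting triples of $(a_i,b_i)$ that sit inside $R$, so at most $9k(pn)^2$ of those triples are blocked. Since $k\le\gamma^3 n/12$ and $p$ was chosen so that $9(\gamma^3 n/12)(pn)^2<\tfrac12\mu p^3 n^3$ --- this is exactly where $\mu=\Theta(\gamma^2)$ is used --- an unblocked connecting triple for $(a_i,b_i)$ remains, and its three vertices are automatically new and inside $R$. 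Running this for $i=1,\dots,k$ yields the desired triple system contained in $R$, completing the proof.

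The main obstacle is the quantitative step: upgrading the purely existential Lemma~\ref{connecting} to the statement that every pair has $\Omega(n^3)$ connecting triples, with a constant large enough (of order $\gamma^2$) to absorb both the restriction to the small set $R$ and the greedy deletions; the remaining ingredients --- the Chernoff and bounded-differences bounds, the union bound over pairs, and the greedy connection --- are routine. Since the excerpt already grants Lemma~\ref{connecting} and states that Lemma~\ref{reservoir} follows from~\cite{BHScycle}, the most economical write-up simply imports the supersaturated connecting estimate from that paper and then runs the probabilistic deletion argument above.
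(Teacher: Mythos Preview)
The paper does not give its own proof of this lemma: it is quoted verbatim from Bu\ss--Han--Schacht~\cite{BHScycle} and explicitly ``presented without proof'' because it follows directly from that reference. Your sketch is the standard random-reservoir argument and is essentially the proof that appears in~\cite{BHScycle}, so there is nothing to compare against within this paper.

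Two small bookkeeping remarks on the sketch itself. First, in the greedy step you must also forbid the $2k$ vertices $a_j,b_j$ (not just the previously chosen triple vertices), since the definition of a connecting system requires $|\{a_i,b_i,x_i,y_i,z_i:i\in[k]\}|=5k$; this only changes the constant in front of $k$ and does not affect the argument. Second, the inequality $9(\gamma^3 n/12)(pn)^2<\tfrac12\mu p^3 n^3$ with $p=2\gamma/3$ requires $\mu>\tfrac94\gamma^2$, so you should check that the supersaturated connecting count you import from~\cite{BHScycle} actually delivers a constant of that order (it does, but it is worth stating the needed lower bound on $\mu$ explicitly rather than just ``of order $\gamma^2$'').
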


Next, we shall use Lemma~\ref{connecting} to prove an absorbing lemma. 

\begin{lemma}\label{absorbing}
    For all $0  <\varepsilon < \frac{1}{2}$ , there exist $\beta > 0$ and $n_0 \in \mathbb{N}$ such that the following holds. Let $H$ be an $n$-vertex $3$-graph with $n \geq n_0$ and $\delta_2^+(H) \geq \left( \frac{1}{2} + \varepsilon \right)n$. Then there is a loose path $P$ with $|V(P)| \leq \frac{5 \varepsilon^3 n}{6}$ such that for all subsets $U \subset V(H) \setminus V(P)$ with $|U| \leq \beta n$ and $|U| \in 2\mathbb{N}$, there exists a loose path $Q$ in $H$ such that $V(Q) = V(P) \cup U$ and $P$ and $Q$ have the same endpoints.
\end{lemma}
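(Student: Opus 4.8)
The plan is to use the absorbing method, with the following gadget. For an ordered pair $(x,y)$ of distinct vertices, call an ordered $5$-tuple $(a_1,a_2,a_3,a_4,a_5)$ of distinct vertices, all distinct from $x$ and $y$, an \emph{$(x,y)$-absorber} if
\[\{a_1,a_2,a_3\},\ \{a_3,a_4,a_5\},\ \{a_3,a_4,x\},\ \{a_4,a_5,y\}\in E(H).\]
The first two memberships say that $(\{a_1,a_2,a_3\},\{a_3,a_4,a_5\})$ is a loose path with endpoints $a_1$ and $a_5$; the first, third, and fourth say that $(\{a_1,a_2,a_3\},\{a_3,x,a_4\},\{a_4,y,a_5\})$ is a loose path on $\{a_1,\dots,a_5,x,y\}$ with the \emph{same} endpoints $a_1$ and $a_5$. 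So one absorber ``swallows'' the pair $\{x,y\}$ without changing endpoints; since $|U|$ is even, the path $P$ will be built from vertex-disjoint absorbers, and $U$ will be split into $|U|/2$ pairs each absorbed by its own absorber inside $P$.

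The crux will be the bound: for every ordered pair $(x,y)$ of vertices, $H$ has at least $c_\varepsilon n^5$ $(x,y)$-absorbers, for some constant $c_\varepsilon=c_\varepsilon(\varepsilon)=\Omega(\varepsilon^2)>0$. If $x$ or $y$ is isolated there is nothing to absorb in the application, so (cf.\ the remark preceding Lemma~\ref{connecting}) I assume $H$ has no isolated vertices; then $|N(v)|\ge(\tfrac12+\varepsilon)n$ for every $v$, and $|N(S)|\ge(\tfrac12+\varepsilon)n$ for every $2$-set $S$ with $d_2(S)>0$. I build the tuple greedily, at each step avoiding the boundedly many vertices already chosen together with $x,y$: pick $a_4\in N(x)\cap N(y)$ (at least $2\varepsilon n-O(1)$ choices, by inclusion-exclusion); pick $a_3\in N(a_4,x)$ (legitimate since $d_2(a_4,x)>0$); pick $a_5\in N(a_4,y)\cap N(a_3,a_4)$ (at least $2\varepsilon n-O(1)$ choices, again by inclusion-exclusion, since now both pairs have positive co-degree); pick $a_2\in N(a_3,a_4)$; pick $a_1\in N(a_2,a_3)$ (legitimate since $\{a_2,a_3,a_4\}\in E(H)$). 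Multiplying the numbers of choices gives at least
\[\bigl(2\varepsilon n\bigr)\cdot\bigl(\tfrac12+\varepsilon\bigr)n\cdot\bigl(2\varepsilon n\bigr)\cdot\bigl(\tfrac12+\varepsilon\bigr)n\cdot\bigl(\tfrac12+\varepsilon\bigr)n\cdot\bigl(1-o(1)\bigr)\ \ge\ c_\varepsilon n^5\]
such tuples for $n$ large.

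From this I extract a small absorber family by the standard random argument. Fix a small constant $\gamma=\gamma(\varepsilon)>0$ (in the end $\gamma=\tfrac{5\varepsilon^3}{96}$ works). Form a random collection $\mathcal{R}$ by including each ordered $5$-tuple of distinct vertices independently with probability $p=\gamma n^{-4}$. Then $\mathbb{E}|\mathcal{R}|\le\gamma n$, the expected number of intersecting pairs of tuples in $\mathcal{R}$ is $O(\gamma^2 n)$, and for each of the $<n^2$ ordered pairs $(x,y)$ the expected number of $(x,y)$-absorbers in $\mathcal{R}$ is at least $c_\varepsilon\gamma n$; by Markov's inequality, a Chernoff bound, and a union bound over pairs, for $n\ge n_0$ some outcome has $|\mathcal{R}|\le 2\gamma n$, at most $O(\gamma^2 n)$ intersecting pairs, and at least $\tfrac12 c_\varepsilon\gamma n$ $(x,y)$-absorbers in $\mathcal{R}$ for every $(x,y)$. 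Deleting one tuple from each intersecting pair gives a pairwise disjoint family $\mathcal{A}=\{A^{(1)},\dots,A^{(s)}\}$ with $s\le 2\gamma n$ which (for $\gamma$ small relative to $c_\varepsilon$) still has at least $\tfrac14 c_\varepsilon\gamma n$ $(x,y)$-absorbers for every ordered pair. Each $A^{(j)}=(a^j_1,\dots,a^j_5)$ is already the loose path $(\{a^j_1,a^j_2,a^j_3\},\{a^j_3,a^j_4,a^j_5\})$ with endpoints $a^j_1,a^j_5$. I delete from $H$ the $3s+2$ vertices $\{a^j_2,a^j_3,a^j_4:1\le j\le s\}\cup\{a^1_1,a^s_5\}$; only $O(\gamma n)$ vertices are removed, so the remaining hypergraph still meets the hypothesis of Lemma~\ref{connecting}, and applying that lemma to the $s-1$ pairs $(a^j_5,a^{j+1}_1)$ yields pairwise disjoint connecting triples $(x_j,y_j,z_j)$ avoiding every absorber vertex. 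Splicing these in, namely following each $\{a^j_3,a^j_4,a^j_5\}$ by $\{a^j_5,x_j,y_j\}$ and then $\{y_j,z_j,a^{j+1}_1\}$, produces a single loose path $P$ on $8s-3$ vertices with endpoints $a^1_1$ and $a^s_5$.

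Finally, with $\gamma=\tfrac{5\varepsilon^3}{96}$ one gets $|V(P)|=8s-3<16\gamma n=\tfrac{5\varepsilon^3}{6}n$, and I set $\beta:=\tfrac18 c_\varepsilon\gamma>0$. Given $U\subseteq V(H)\setminus V(P)$ with $|U|\le\beta n$ and $|U|$ even, split $U$ into $|U|/2<\tfrac14 c_\varepsilon\gamma n$ pairs, fixing an ordering $(x,y)$ of each; processing them one by one, assign each pair a hitherto-unused member $A^{(j)}$ of $\mathcal{A}$ that is an $(x,y)$-absorber, which is possible because $\mathcal{A}$ contains at least $\tfrac14 c_\varepsilon\gamma n$ such absorbers, fewer than $|U|/2$ have been used, and $V(\mathcal{A})\subseteq V(P)$ is disjoint from $U$. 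For each used absorber, replace its two edges $\{a^j_1,a^j_2,a^j_3\},\{a^j_3,a^j_4,a^j_5\}$ in $P$ by $\{a^j_1,a^j_2,a^j_3\},\{a^j_3,x,a^j_4\},\{a^j_4,y,a^j_5\}$; every segment keeps its endpoints, so the global endpoints $a^1_1,a^s_5$ are preserved, and the result $Q$ is a loose path with $V(Q)=V(P)\cup U$ and the same endpoints as $P$. The main obstacle will be the counting bound of the second paragraph, i.e.\ turning the positive co-degree hypothesis into $\Omega_\varepsilon(n^5)$ absorbers per pair; the one non-routine point there is that $x$ and $y$ have $\Omega(\varepsilon n)$ common $H$-neighbours, which is precisely what permits the choice of the ``centre'' vertex $a_4$ incident to both. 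A minor technical nuisance is the deletion of absorber interiors in the third paragraph, done so that the connecting triples supplied by Lemma~\ref{connecting} remain disjoint from them.
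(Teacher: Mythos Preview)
Your proof is correct and follows essentially the same absorbing-method template as the paper: count many absorbing gadgets per pair, select a random family, delete intersecting pairs, and connect via Lemma~\ref{connecting}. The one substantive difference is the gadget itself: you use a $5$-vertex, $2$-edge loose path $(a_1,\dots,a_5)$ that expands to a $3$-edge path through $x,y$, whereas the paper uses a $7$-vertex, $3$-edge loose path $v_1\cdots v_7$ (absorbing $\{x,y\}$ via $v_2xv_4,\,v_4yv_6\in E(H)$ and rerouting as $v_1v_3v_2xv_4yv_6v_5v_7$). Your gadget is smaller and yields an $\Omega(\varepsilon^2 n^5)$ count against the paper's $\Omega(\varepsilon^3 n^7)$; your step ``pick $a_2\in N(a_3,a_4)$'' is a neat way to force $d_2(a_2,a_3)>0$ so that $a_1$ can then be chosen. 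A second minor difference is in guaranteeing the connecting triples avoid the absorber interiors: the paper does this by feeding the interior vertices into Lemma~\ref{connecting} as extra dummy pairs, while you delete them and re-verify the degree hypothesis on the remaining hypergraph; both work, and yours is perhaps the more transparent bookkeeping.
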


\begin{proof}

We begin by considering pairs of vertices. Let $x,y \in V(H)$ and $P = v_1 \dots v_7$ be a $7$-vertex loose path with $V(P) \in V(H)\setminus\{x,y\}$. We say that $P$ \textit{absorbs} $\{x,y\}$ if $v_2xv_4, v_4yv_6 \in E(H)$. In particular, if $P$ absorbs $\{x,y\}$, then
\[Q = v_1v_3v_2xv_4yv_6v_5v_7\]
is a loose path with $V(Q) = V(P) \cup \{x,y\}$ and the same endpoints as $P$. We first claim that for any pair $\{x,y\}$, there are many $7$-vertex loose paths which absorb $\{x,y\}$.

\begin{claim}\label{smallabsorbers}
Suppose $n$ is large enough that $\varepsilon^2 n^2 \geq \frac{1}{2}n$ and $\varepsilon n \geq 8$. For every pair $x,y \in V(H)$, there exist at least $\frac{\varepsilon^3 n^7}{16}$ $7$-vertex loose paths which absorb $\{x,y\}$.

\end{claim}
\begin{proof}[Proof of Claim \ref{smallabsorbers}]

We provide a procedure for building $3$-edge loose paths to absorb $\{x,y\}$. Firstly, choose $v_4 \in N(x) \cap N(y)$. Since $|N(x)|$ and $|N(y)|$ are each at least $\left( \frac{1}{2} + \varepsilon \right)n$, there are at least $2 \varepsilon n$ choices for $v_4$. Next, choose $v_3, v_5$ such that $v_3v_4v_5 \in E(H)$, and $v_3, v_5$ are not equal to $x$ or $y$. There are at least $\binom{|N(v_4)|}{2}$ $3$-edges which contain $v_4$, at most $2n$ of which contain either $x$ or $y$. Using that $|N(v_4)| \geq \left(\frac{1}{2} + \varepsilon \right)n$, we have at least $\frac{n^2}{8}$ choices for the $3$-edge $v_3v_4v_5$ by the bound on $n$. 

We next select $v_2, v_6$. We shall require $v_2,v_6$ to satisfy the following:

\begin{enumerate}

\item $d_2(v_2,v_3) > 0$ and $d_2(v_5,v_6) > 0$;

\item $v_2v_4x \in E(H)$ and $v_4v_6y \in E(H)$. 

\item $v_2,v_6 \not\in \{x,y,v_3,v_4,v_5\}$.

\end{enumerate}

We have selected $v_4$ so that $d_2(x,v_4) > 0$ and $d_2(y,v_4) > 0$, so we have $|N(x,v_4)| \geq \left( \frac{1}{2} + \varepsilon \right)n$ and $|N(y,v_4)| \geq \left( \frac{1}{2} + \varepsilon \right)n$. We also have that $|N(v_3)|$ and $|N(v_5)|$ are at least $\left( \frac{1}{2} + \varepsilon \right)n$, so 
\[|N(x,v_4) \cap N(v_3)| \geq 2\varepsilon n\]
and
\[|N(y,v_4) \cap N(v_5)| \geq 2 \varepsilon n\]
The only other restrictions on the selection of $v_2$ and $v_6$ are that they are not equal to any vertex already selected, or to each other. Thus, there are at least $\varepsilon n$ possible selections for each of $v_2$ and $v_6$. 

Finally, we must select $v_1$ and $v_7$. We simply require $v_1 \in N(v_2,v_3)$ and $v_7 \in N(v_5,v_6)$, and that $v_1, v_7$ are distinct vertices which are not equal to any previously specified vertices. There are at least $\frac{n}{2}$ ways to thus select each of $v_1,v_7$ (since $\varepsilon n \geq 8$).

Thus, in total, there are at least 
\[2\varepsilon n \cdot \frac{n^2}{8} \cdot (\varepsilon n)^2 \cdot \left(\frac{n}{2} \right)^2 = \frac{\varepsilon^3 n^7}{16} \]
3-edge loose paths which absorb $\{x,y\}$.
\end{proof}


In similar fashion to~\cite{BHScycle}
we construct a family $\F$ of $7$-tuples $(v_1\ldots v_7)$ from among all $7$-vertex loose paths in $H$ by independently selecting each tuple with probability $p=\varepsilon^3n^{-6}/12544$. Using standard probabilistic arguments, it is straightforward to show that, for $n$ large enough, with non-zero probability the family satisfies 
\begin{enumerate}
    \item $|\F| \le \varepsilon^3 n /12$;
    \item for every pair $x,y\in V(H)$, there exist $\frac{p\varepsilon^3 n^7}{32}$ tuples $(v_1,\ldots, v_7)\in \F$ such that $v_1\ldots v_7$ is a $7$-vertex loose path that absorbs $\{x,y\}$;
    \item the number of intersecting pairs of 7-tuples in $\F$ is at most $\frac{p\varepsilon^3 n^7}{64}$.
\end{enumerate}

We eliminate one path from each intersecting pair. There remain $k\le \varepsilon^3n/12$ loose paths $(v_1^i, \dots, v_7^i)_{i\in [k]}$. Moreover, since we deleted at most $\frac{p\varepsilon^3 n^7}{64}$ $7$-tuples, any pair $\{x,y\}$ disjoint from these remaining paths must be absorbed by at least $\frac{p\varepsilon^3 n^7}{64}$ of them. 

Now, our goal is to apply Lemma~\ref{connecting} on the vertex pairs $(v_7^i,v_1^{i+1})_{i\in [k-1]}$ to create a loose path with ends $v_1^1$ and $v_7^k$. However, we wish to ensure that our connecting vertices do not themselves come from these $7$-vertex loose paths. To this end, arbitrarily pair the remaining vertices $v_{j}^i$, $j\not\in \{1,7\}$ with each other, and observe that we still have $7k/2<\varepsilon n /12$ pairs (using the upper bound on $\varepsilon$). Thus we may apply Lemma~\ref{connecting} such that the connecting vertices are all disjoint from our $7$-vertex loose paths, yielding a loose path containing at most $\frac{7\varepsilon^3 n}{12} + \frac{3 \varepsilon^3 n}{12} = \frac{5 \varepsilon^3 n}{6}$ vertices that can absorb an arbitrary set of least $\frac{p\varepsilon^3 n^7}{32}=\beta n$ vertices.
\end{proof}

\subsection{Tiling lemmas}\label{tiling subsection}

Given a fixed family of hypergraphs $\mathcal{K}$ and an $n$-vertex hypergraph $H$, a \textit{partial} $\mathcal{K}$-\textit{tiling} of $H$ is a family $\mathcal{T}$ of vertex-disjoint subhypergraphs of $H$, each of which is isomorphic to some member of $\mathcal{K}$. We say that such a family $\mathcal{T}$ is a $\mathcal{K}$\textit{-tiling} of $H$ if $\mathcal{T}$ spans $V(H)$, i.e., each vertex of $H$ is in a member of $\mathcal{T}$. We say that $\mathcal{T}$ is an $\alpha$\textit{-deficient} $\mathcal{K}$-tiling of $H$ if $\mathcal{T}$ leaves at most $\alpha n$ vertices of $H$ uncovered.

The final component which we require is an almost path-tiling result. Formally, let $\mathcal{P}$ be the family of $3$-graphs consisting of all loose paths. The goal of this subsection is to prove the following lemma.

\begin{lemma}\label{path tiling lemma}

For any fixed $\varepsilon > 0, \alpha > 0$, there exist integers $n_0, p$ such that the following holds for $n > n_0$. If $H$ is an $n$-vertex $3$-graph with $\delta_2^+(H) \geq \left( \frac{1}{2} + \varepsilon \right)n$, then $H$ admits an $\alpha$-deficient $\mathcal{P}$-tiling $\mathcal{T}$ with $|\mathcal{T}| \leq p$.
 
\end{lemma}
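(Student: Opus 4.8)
The plan is to prove Lemma~\ref{path tiling lemma} by first establishing a \emph{fractional} or \emph{auxiliary} path-tiling result and then converting it into an integral almost-tiling via a hypergraph regularity argument, mimicking the scheme of Bu\ss--Han--Schacht \cite{BHScycle} and Han \cite{handissertation} but carried out in the positive co-degree setting. Concretely, the first and most important step is to show that the positive co-degree bound $\delta_2^+(H) \ge (\tfrac12+\varepsilon)n$ guarantees a small collection of \emph{short} loose paths covering almost all vertices after one passes to a ``cleaned'' subgraph. The natural route is: apply the weak hypergraph regularity lemma to $H$, pass to the reduced multigraph (or reduced hypergraph) on the cluster set, and observe that the positive co-degree condition transfers — every pair of clusters that spans \emph{any} edge in a dense regular triple spans $(\tfrac12+\varepsilon - o(1))$-proportion of such, so in the reduced graph one recovers an analogous ``positive co-degree'' property among clusters. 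Then, since any $3$-graph with $\delta_2^+ \ge (\tfrac12+\varepsilon)n$ and no isolated vertices has $\delta_1 \ge (\tfrac14+\varepsilon')\binom{n}{2}$ (as noted in the excerpt), one can invoke the fractional/perfect path-tiling machinery for loose paths at minimum vertex degree $\tfrac14\binom{n}{2}$, which is exactly the regime in which loose Hamilton cycles are known to exist; this gives an almost-perfect tiling of the reduced structure by short loose paths.

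The second step is the standard translation back: each short loose path in the reduced hypergraph, living across a bounded number of clusters joined by dense regular triples, can be ``blown up'' into a long loose path covering almost all vertices of those clusters, using a greedy embedding inside regular triples (here one uses that regular triples with positive density and the inherited positive co-degree behave like the dense setting, so greedy extension of a loose path never gets stuck until only an $o(1)$-fraction of a cluster remains). Concatenating over all clusters used by the tiling, and discarding the bounded leftover from the regularity partition (the exceptional set, vertices in sparse or irregular triples, and the $o(1)$-fraction left uncovered in each cluster), yields a family $\mathcal{T}$ of loose paths whose total number is bounded by a function of $\varepsilon$ and $\alpha$ only — since the number of clusters, hence the number of reduced-tiling pieces, is bounded — and which leaves at most $\alpha n$ vertices uncovered once the regularity parameters are chosen with $1/M \ll d \ll \varepsilon, \alpha$.

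I expect the main obstacle to be Step~1, specifically \emph{correctly formulating and proving the transfer of the positive co-degree condition through the regularity lemma}. Ordinary minimum co-degree transfers cleanly to the reduced graph, but positive co-degree is more delicate: a pair of clusters might span a regular triple of positive density with a \emph{third} cluster while the corresponding pair-degree inside $H$ is tiny (because the edges are concentrated elsewhere), so one must argue that after deleting a bounded number of ``bad'' clusters and low-density triples, the surviving reduced hypergraph genuinely has the property that every pair of clusters in a dense triple participates in many dense triples. This is where one must be careful to count edges correctly and to ensure the cleaning only removes $o(n)$ vertices; it is plausible this requires slightly strengthening the intermediate statement (e.g., working with a weighted reduced graph, or proving a bespoke ``almost-spanning loose path cover'' lemma directly at positive co-degree $\tfrac12 n$ rather than quoting a vertex-degree result verbatim). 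Once the reduced structure is shown to inherit a usable degree condition, Steps~1b and~2 are routine adaptations of known arguments, and the bound $|\mathcal{T}| \le p$ follows automatically from the boundedness of the regularity partition.
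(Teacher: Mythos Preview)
Your overall architecture---weak regularity, transfer a degree condition to the reduced hypergraph, tile the reduced hypergraph, blow up via regular triples---matches the paper's. But the specific reduction you propose in Step~1 does not go through. You write that $\delta_2^+(H)\ge(\tfrac12+\varepsilon)n$ with no isolated vertices gives $\delta_1(H)\ge(\tfrac14+\varepsilon')\binom{n}{2}$, and then plan to ``invoke the fractional/perfect path-tiling machinery for loose paths at minimum vertex degree $\tfrac14\binom{n}{2}$, which is exactly the regime in which loose Hamilton cycles are known to exist.'' This is the gap: the minimum vertex-degree threshold for loose Hamiltonian cycles in $3$-graphs is $\tfrac{7}{16}\binom{n}{2}$, not $\tfrac14\binom{n}{2}$, and the path-tiling lemmas of Bu\ss--H\`an--Schacht and H\`an are proved at that $\tfrac{7}{16}$ level. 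There exist $3$-graphs with $\delta_1\approx(\tfrac{7}{16}-o(1))\binom{n}{2}$ and no loose Hamiltonian cycle (hence no bounded almost-perfect loose-path tiling), so one cannot quote a vertex-degree path-tiling result at $\tfrac14\binom{n}{2}$. The $(\tfrac14+\gamma)\binom{n}{2}$ hypothesis you may be remembering is only what is needed for the connecting and reservoir lemmas, not for the path-tiling lemma.

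The paper therefore does \emph{not} reduce to a vertex-degree statement on the reduced hypergraph. Instead it (i) proves that the cluster hypergraph genuinely inherits the \emph{positive co-degree} condition, via a clean-up lemma of Halfpap--Lemons--Palmer (your ``main obstacle'' is handled this way, and is in fact the easier step); and then (ii) proves from scratch a new tiling lemma in the positive co-degree setting: any $3$-graph $K$ with $\delta_2^+(K)\ge(\tfrac12+\varepsilon)|V(K)|$ and no isolated vertices admits an $\alpha$-deficient $C_4^3$-tiling. This $C_4^3$-tiling lemma is the real work---it is established by a structural argument on the uncovered set using co-degree cherries and switching configurations, and it genuinely uses the positive co-degree hypothesis rather than the weaker vertex-degree consequence. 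Each $C_4^3$ on clusters $V_1,V_2,V_3,V_4$ is then blown up by splitting $V_2,V_3$ in half and applying H\`an's regular-triple path lemma to each of the two resulting $(m,2m,m)$ triples; this is why $C_4^3$ (rather than a short loose path) is the right tile. Your proposal is missing precisely this auxiliary tiling result, and without it the argument does not close.
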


This path-tiling result will be the most involved lemma in this section to prove, and our strategy will involve the use of the weak hypergraph regularity lemma. Before proving any tiling results, we begin with a brief discussion of hypergraph regularity. Szemer\'edi's regularity lemma is known to generalize to hypergraphs, and in fact can be considered to generalize in a number of different ways. The ``weak'' hypergraph regularity lemma is a straightforward generalization of  Szemer\'edi's regularity lemma, which shall nonetheless be powerful enough for our purposes. For more on hypergraph regularity lemmas, see for instance \cite{Rodl_regularity,tao2005variant,gowers2007hypergraph}. Since we only work with $3$-graphs, we shall state the weak regularity lemma for $3$-graphs only. We begin with some notation. 

Let $H$ be a $3$-graph and $V_1, V_2, V_3$ be pairwise disjoint, nonempty subsets of $V(H)$. We denote by $e(V_1,V_2,V_3)$ the number of $3$-edges with precisely one vertex in $V_i$ for each $i \in \{1,2,3\}$. We define the \textit{density} of the triple $(V_1,V_2,V_3)$ to be
\[d(V_1,V_2,V_3) = \frac{e(V_1,V_2,V_3)}{|V_1||V_2||V_3|}.\]
Given $\varepsilon >0$, $d \geq 0$, we say that the triple $(V_1,V_2,V_3)$ is $(\varepsilon,d)$-\textit{regular} if
\[|d(V_1', V_2', V_3') - d| \leq \varepsilon\]
for any triple $(V_1', V_2',V_3')$ such that for each $i \in \{1,2,3\}$, we have $V_i' \subseteq V_i$ and $|V_i'| \geq \varepsilon |V_i|$. Given $\varepsilon > 0$, we call the triple $(V_1,V_2,V_3)$ $\varepsilon$-\textit{regular} if there is
some $d \geq 0$ for which $(V_1,V_2,V_3)$ is $(\varepsilon,d)$-regular. Using this notation, the following lemma holds.

\begin{lemma}[Weak hypergraph regularity]\label{weakreg}
Let $\varepsilon > 0$ and $t_0 \in \mathbb{N}$ be given. There exist $T_0, n_0 \in \mathbb{N}$ such that any $3$-graph $H$ on $n \geq n_0$ vertices admits a vertex partition 
\[V_0 \sqcup V_1 \sqcup \dots \sqcup V_t \]
satisfying the following properties:

\begin{enumerate}
\item $t_0 \leq t\leq T_0$;
\item $|V_1| = \dots = |V_t|$, and $|V_0| \leq \varepsilon n$;
\item All but at most $\varepsilon \binom{t}{3}$ of the triples chosen from $\{V_1, \dots ,V_t\}$ are $\varepsilon$-regular.

\end{enumerate}

\end{lemma}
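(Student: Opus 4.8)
The plan is to run the standard energy-increment (index-increment) argument behind Szemer\'edi's regularity lemma, with the graph density replaced by the weighted $3$-uniform density $d(\cdot,\cdot,\cdot)$. For a vertex partition $\mathcal{Q} = \{V_0, V_1, \dots, V_t\}$ with $V_0$ an exceptional part, define its \emph{index}
\[
q(\mathcal{Q}) = \sum_{\{i,j,k\}\subseteq [t]} \frac{|V_i|\,|V_j|\,|V_k|}{n^3}\, d(V_i,V_j,V_k)^2 ,
\]
and note $0\le q(\mathcal{Q})\le 1$, since each density lies in $[0,1]$ and the weights sum to at most $1$. The argument rests on three facts. \textbf{(i)} If $\mathcal{Q}'$ refines $\mathcal{Q}$ then $q(\mathcal{Q}')\ge q(\mathcal{Q})$; this is a Jensen/Cauchy--Schwarz computation carried out one triple at a time, using that $d(V_i,V_j,V_k)$ is exactly the weighted average of the densities of the pieces into which a refinement splits the triple $(V_i,V_j,V_k)$. \textbf{(ii)} Given any $m$, any equitable partition with $m$ nonexceptional parts, and any target $M$, there is an equitable refinement into $M$ equal nonexceptional parts, obtained by chopping each part into blocks of a common size and moving the (boundedly many per part) leftover vertices into the exceptional set. \textbf{(iii)} The irregularity-implies-increment bound below.

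\textbf{Increment step.} Suppose $\mathcal{Q}$ violates property (3), so more than $\varepsilon\binom{t}{3}$ of the triples from $\{V_1,\dots,V_t\}$ fail to be $\varepsilon$-regular. Such a triple $(V_i,V_j,V_k)$ is in particular not $(\varepsilon, d(V_i,V_j,V_k))$-regular, so we may fix witnesses $V_i'\subseteq V_i$, $V_j'\subseteq V_j$, $V_k'\subseteq V_k$ with $|V_i'|\ge\varepsilon|V_i|$, $|V_j'|\ge\varepsilon|V_j|$, $|V_k'|\ge\varepsilon|V_k|$ and $|d(V_i',V_j',V_k') - d(V_i,V_j,V_k)| > \varepsilon$. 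For each $\ell\in [t]$, let $\mathcal{A}_\ell$ be the common refinement of $V_\ell$ by all the witness subsets of $V_\ell$ coming from irregular triples through $V_\ell$; since $V_\ell$ lies in at most $\binom{t-1}{2}$ triples, $|\mathcal{A}_\ell|\le 2^{\binom{t-1}{2}}$. Let $\mathcal{Q}'$ refine each $V_\ell$ into $\mathcal{A}_\ell$ (keeping $V_0$). For one irregular triple, a defect form of Cauchy--Schwarz --- the variance of the eight piece-densities about their mean $d(V_i,V_j,V_k)$ is at least the relative weight $\frac{|V_i'||V_j'||V_k'|}{|V_i||V_j||V_k|}\ge\varepsilon^3$ of the witness atom times the squared deviation $>\varepsilon^2$ --- shows that refining $(V_i,V_j,V_k)$ by the $2\times2\times2$ witness partition raises its contribution to the index by at least $\varepsilon^5\cdot\frac{|V_i||V_j||V_k|}{n^3}$. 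Since $\mathcal{Q}'$ refines every such $2\times2\times2$ partition, fact (i) lets these local gains accumulate, and using $|V_\ell|\ge(1-\varepsilon)n/t$ together with the count of irregular triples we obtain $q(\mathcal{Q}')\ge q(\mathcal{Q}) + c(\varepsilon)$ for a constant $c(\varepsilon)>0$ depending only on $\varepsilon$ (of order $\varepsilon^6$).

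\textbf{Iteration and cleanup.} Start from an arbitrary equitable partition with $V_0=\emptyset$ and $t_0$ nonexceptional parts. While the current partition violates (3), apply the increment step to raise the index by $c(\varepsilon)$, then invoke fact (ii) to re-equitize, discarding a controlled number of vertices into $V_0$ each round. Since $q\le 1$, at most $1/c(\varepsilon)$ rounds occur, so the process halts at a partition satisfying (3); property (1) holds because $t$ never drops below $t_0$, and (2) because we may take $T_0$ to be the result of iterating $\ell\mapsto\ell\cdot 2^{\binom{\ell}{2}}$ a total of $\lceil 1/c(\varepsilon)\rceil$ times starting from $t_0$, while the total number of vertices ever moved to $V_0$ is $\sum_{\text{rounds}}O\!\big(n/(\#\text{parts that round})\big)\le\varepsilon n$ once $n_0$ is large (enlarging $t_0$ if needed so the first few terms of this sum are tiny). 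The main obstacle is the increment step: making the defect-Cauchy--Schwarz bookkeeping rigorous, in particular verifying that the index gains from distinct irregular triples genuinely add up rather than cancel --- which is precisely what monotonicity under refinement (fact (i)) guarantees, once all the witnesses are folded into the single common refinement $\mathcal{Q}'$ --- and keeping the exceptional set below $\varepsilon n$ throughout. Everything else is the familiar, if fiddly, equitability accounting.
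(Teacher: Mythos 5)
The paper does not prove Lemma~\ref{weakreg}: it is stated as a known form of the weak hypergraph regularity lemma and is cited to the literature (R\"odl et al., Tao, Gowers). So there is no paper-internal proof to compare against. Your proposal is the standard energy-increment proof of that cited result, carried over verbatim from Szemer\'edi's argument, and in outline it is correct: the index $q(\mathcal{Q})\in[0,1]$, monotonicity under refinement by Jensen, the witness-atom defect Cauchy--Schwarz giving a per-triple variance gain of at least $\varepsilon^{5}$, the additivity of these gains under the common refinement, and the $O(\varepsilon^{6})$ global increment after weighting by $|V_i||V_j||V_k|/n^3$ and summing over the $>\varepsilon\binom{t}{3}$ irregular triples, all check out. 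The one place you are imprecise (and flag as ``fiddly'') is the exceptional-set accounting: the discard in a re-equitization round is not $O\bigl(n/(\#\text{parts that round})\bigr)$ but rather $O\bigl(a_i\,n/t_{i+1}\bigr)$ where $a_i\le t_i\,2^{\binom{t_i-1}{2}}$ is the number of atoms produced in round $i$, so one must actually choose $t_{i+1}$ large relative to $a_i$ (e.g.\ $t_{i+1}\ge a_i\,2^{i+1}/\varepsilon$) to make the round-$i$ discard at most $\varepsilon n/2^{i+1}$, and then absorb the initial $\le t_0$ leftover by taking $n_0\ge 2t_0/\varepsilon$. There is also a second loss to track that fact~(i) alone does not cover: moving vertices to $V_0$ during re-equitization deletes some triples from the index sum, and one must check this deletion costs less than the $c(\varepsilon)$ gained --- again routine once the discard per round is $O(\varepsilon/2^{i+1})$. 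With those two standard adjustments the argument is complete.
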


We call a partition as guaranteed in Lemma~\ref{weakreg} an $\varepsilon$-\textit{regular partition}. A related notion is the \textit{cluster hypergraph}. Given an $\varepsilon$-regular partition $V_0 \sqcup V_1 \sqcup \dots \sqcup V_t$ of $H$ and a fixed $d \geq 0$, the associated $(\varepsilon, d)$-\textit{cluster hypergraph} $K = K(\varepsilon,d)$ is the 3-graph with vertex set $\{1, \dots, t\}$ and $ijk \in E(K)$ if and only if $(V_i,V_j,V_k)$ is $\varepsilon$-regular with $d(V_i,V_j,V_k) \geq d$.

The above discussion will be relevant as follows. Suppose $H$ is a large $3$-graph with $\delta_2^+(H) \geq \left(\frac{1}{2} + \varepsilon \right)n$. Our ultimate goal is to find an almost path-tiling of $H$. The rough idea is to first apply weak hypergraph regularity to $H$, then almost tile the resulting cluster graph $K$ with some small structure, using the fact (proven below) that $K$ nearly inherits $\delta_2^+(H)$. The resulting almost tiling of $K$ can then be lifted to an almost path-tiling of $H$, with each small tile in $K$ indicating a pair of loose paths which nearly span the corresponding clusters.  

To find the desired almost tiling of $K$, we shall require some knowledge of $\delta_2^+(K)$, which we now investigate. We start by stating a ``clean-up lemma'', proved by Halfpap, Lemons, and Palmer~\cite{pcddensity}.

\begin{lemma} \label{cleanup lemma}

Let $H$ be an $n$-vertex $r$-graph and fix $0< \varepsilon <1$ small enough that 
\[(r+1)! \sqrt[2^{r-1}]{\varepsilon}n^r < |E(H)|.\] Let $H_1$ be a subhypergraph of $H$ obtained by the deletion of at most $\varepsilon n^r$ $r$-edges. Then $H_1$ has a subhypergraph $H_2$ 
with $\delta_{r-1}^+(H_2) \geq \delta_{r-1}^+(H) - 2^r r! \sqrt[2^{r-1}]{\varepsilon}n$. Moreover, $H_2$ can be obtained from $H$ by the deletion of at most $(r+1)! \sqrt[2^{r-1}]{\varepsilon}n^r$ $r$-edges.

\end{lemma}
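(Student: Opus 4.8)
The plan is to induct on $r$, using the elementary identity
$\delta_{r-1}^+(H)=\min_v \delta_{r-2}^+(L(v))$, where the minimum is over vertices $v$ lying in at least one edge of $H$. Indeed, for an $(r-1)$-set $S=T\cup\{v\}$ with $|T|=r-2$ one has $d_{r-1}(S)=d_{L(v)}(T)$, so an $(r-1)$-set has small-but-positive co-degree in $H$ exactly when some $(r-2)$-set has small-but-positive co-degree in some link graph $L(v)$. Thus cleaning up $H$ amounts to cleaning up each of its link graphs, and the passage from an $r$-graph to its $(r-1)$-uniform links is what drives the induction; the root $\sqrt[2^{r-1}]{\varepsilon}$ reflects that each of the $r-1$ rounds of this passage costs a square root.

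For the base case $r=2$ a single pass suffices. We may assume $\delta_1^+(H)$ is large, since otherwise the asserted bound on $\delta_1^+(H_2)$ is vacuous and we take $H_2=H_1$. Let $B$ be the set of vertices $v$ with $0<d_{H_1}(v)<\delta_1^+(H)-2\sqrt{\varepsilon}\,n$. Each such $v$ had degree at least $\delta_1^+(H)$ in $H$ and so lost more than $2\sqrt{\varepsilon}\,n$ edges; since deleting $\le\varepsilon n^2$ edges destroys at most $2\varepsilon n^2$ incidences, $|B|\le\sqrt{\varepsilon}\,n$. Take $H_2$ to be $H_1$ with all edges meeting $B$ removed. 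Every vertex of $B$ becomes isolated, every other vertex with positive degree retains degree at least $\delta_1^+(H)-2\sqrt{\varepsilon}\,n-|B|\ge\delta_1^+(H)-3\sqrt{\varepsilon}\,n$, and the total number of edges deleted from $H$ is at most $\varepsilon n^2+|B|\,n\le 2\sqrt{\varepsilon}\,n^2$; both fit inside $8\sqrt{\varepsilon}\,n$ and $6\sqrt{\varepsilon}\,n^2$.

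For the inductive step, assume the lemma for $(r-1)$-graphs and again reduce to the case where $\delta_{r-1}^+(H)$ is linear in $n$. A short greedy argument then shows that every non-isolated vertex $v$ satisfies $|E(L(v))|=d_1(v)\gtrsim(\delta_{r-1}^+(H))^{r-1}$, so the link graphs are dense and the inductive clean-up lemma applies to them. Passing from $H$ to $H_1$ removes at most $r\,|E(H)\setminus E(H_1)|\le r\varepsilon n^r$ edges from the links in total, so on average a link loses at most about $\varepsilon n^{r-1}$ edges; call $v$ \emph{damaged} if $L_{H_1}(v)$ has lost more than $\sqrt{\varepsilon}\,n^{r-1}$ edges, so that there are at most $r\sqrt{\varepsilon}\,n$ damaged vertices. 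First delete every edge meeting a damaged vertex; this is cheap. For each remaining non-isolated $v$, apply the inductive clean-up lemma to $L_{H_1}(v)$ with deletion fraction $\approx\sqrt{\varepsilon}$ (here is where the square root enters, since $\sqrt[2^{r-2}]{\sqrt{\varepsilon}}=\sqrt[2^{r-1}]{\varepsilon}$), obtaining a cleaned link with $\delta_{r-2}^+\ge\delta_{r-2}^+(L_H(v))-2^{r-1}(r-1)!\,\sqrt[2^{r-1}]{\varepsilon}\,n$, and let $H_2$ be $H_1$ minus all the resulting lifted deletions. Then $\delta_{r-1}^+(H_2)\ge\delta_{r-1}^+(H)-2^{r}r!\,\sqrt[2^{r-1}]{\varepsilon}\,n$, with the extra slack absorbing the damaged-vertex step.

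The main obstacle is making this bookkeeping close. The damage threshold must simultaneously be small enough that every undamaged link still carries enough edges to legally invoke the inductive lemma — and one must also dispose of the few vertices whose links, though not badly damaged, are too sparse for the induction, for instance by an auxiliary iterated removal of low-degree vertices whose cost must likewise be absorbed — while being large enough that deleting all edges at damaged vertices stays under the budget $(r+1)!\,\sqrt[2^{r-1}]{\varepsilon}\,n^r$. Moreover, the cleanups of the $n$ individual links delete overlapping sets of edges of $H$, and one must check that their union still has size at most $(r+1)!\,\sqrt[2^{r-1}]{\varepsilon}\,n^r$ while costing only $2^{r}r!\,\sqrt[2^{r-1}]{\varepsilon}\,n$ in co-degree. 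The constants $2^r r!$ and $(r+1)!$ and the exponent $2^{r-1}$ are precisely what $r-1$ iterations of this square-root trade-off produce.
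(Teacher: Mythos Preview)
The paper does not prove this lemma; it is quoted without proof from Halfpap, Lemons, and Palmer~\cite{pcddensity}. So there is no in-paper argument to compare against, and your sketch must stand on its own.

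Your overall architecture---induction on $r$ through link graphs, with a square-root loss at each of the $r-1$ passages, which is exactly what produces the $2^{r-1}$-th root---is the natural one and almost certainly the route taken in~\cite{pcddensity}. The base case $r=2$ is fine as written.

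The inductive step, however, has a real gap that you flag but do not close. You clean each link $L_{H_1}(v)$ independently to obtain $L'(v)$ with good $\delta_{r-2}^+$, lift the deletions to $r$-edges of $H$, and set $H_2$ to be $H_1$ minus the \emph{union} of all these lifted deletions. The edge-count bookkeeping does go through: summing $r!\sqrt[2^{r-1}]{\varepsilon}\,n^{r-1}$ over $n$ vertices, plus the damaged-vertex deletions, fits under $(r+1)!\sqrt[2^{r-1}]{\varepsilon}\,n^r$. But the co-degree bound does not follow. For a fixed $v$, the final link $L_{H_2}(v)$ is a subgraph of your cleaned $L'(v)$, obtained by additionally removing every $(r-1)$-edge $T$ such that the cleanup of $L(w)$ for some $w\in T$ happened to discard $(T\setminus\{w\})\cup\{v\}$. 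You have no per-vertex control on how many such extra deletions hit a given $L'(v)$; only the \emph{total} over all $v$ is bounded. So $\delta_{r-2}^+(L_{H_2}(v))$ can be much smaller than $\delta_{r-2}^+(L'(v))$, and the identity $\delta_{r-1}^+(H_2)=\min_v\delta_{r-2}^+(L_{H_2}(v))$ gives nothing.

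To repair this you need either to iterate the entire scheme (declare a second round of ``damaged'' vertices---those whose links absorbed too many foreign deletions---isolate them, and re-clean, checking that the process stabilises within the stated constants), or to run the link cleanups sequentially rather than in parallel so that each cleanup sees the deletions already made. Either fix requires nontrivial extra bookkeeping that your sketch does not supply; the sentence ``one must check that their union \dots\ costing only $2^r r!\sqrt[2^{r-1}]{\varepsilon}\,n$ in co-degree'' is precisely where the missing argument lives.
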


We shall use Lemma~\ref{cleanup lemma} to prove the following lemma, which makes precise the claim that a cluster graph $K$ of $H$ ``nearly inherits'' $\delta_2^+(H)$.

\begin{lemma}\label{codeg reg}

Let $0 < \varepsilon < 1/2$ and $t_0 \in \mathbb{N}$ be given, and fix $n_0, T_0$ as in Lemma~\ref{weakreg}. Let $H$ be an $n$-vertex $3$-graph with $\delta_2^+(H) \geq cn$ and $n \geq n_0$. Suppose that 
\[V_0 \sqcup V_1 \sqcup \dots \sqcup V_t\]
is a partition of $V(H)$ satisfying the properties stated in Lemma~\ref{weakreg}. Fix $d \geq 0$, and let $K = K(\varepsilon,d)$ be the $(\varepsilon,d)$-cluster hypergraph associated with the given partition. Set  
\[\alpha = 2 \varepsilon + \frac{1}{t} + d\]
and 
\[f(\alpha) = 48\sqrt[4]{\alpha}.\]
If $|E(H)| > 12\sqrt[4]{\alpha} n^3$, then $K$ contains a subhypergraph $K'$ with $\delta_2^+(K') \geq (c - f(\alpha))t$. Moreover, if $H$ contains $g(n)$ isolated vertices, then $K'$ contains at most 
\[\left(\frac{g(n)}{n} + \frac{288 \sqrt[4]{\alpha}}{c^2}\right) t\]
isolated vertices.

\end{lemma}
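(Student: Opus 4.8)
The plan is to push the minimum positive co-degree condition from $H$ down to the cluster hypergraph via a cleaning step based on Lemma~\ref{cleanup lemma}, and then read off both conclusions for $K'$ directly from the cleaned-up hypergraph. First I would delete from $H$ every \emph{bad} edge: one meeting $V_0$, one with two vertices in a common part $V_i$, one lying in an irregular triple $(V_i,V_j,V_k)$, or one lying in a regular triple of density below $d$. Using $|V_0|\le\varepsilon n$, $|V_i|\le n/t$, and the bound $\varepsilon\binom{t}{3}$ on the number of irregular triples, a routine count shows the number of bad edges is small — at most $\alpha n^3$, and in fact small enough relative to $|E(H)|$ that Lemma~\ref{cleanup lemma} applies with its parameter set to the bad-edge density. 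This produces a subhypergraph $H_2\subseteq H$ with $\delta_2^+(H_2)\ge cn-48\sqrt[4]{\alpha}\,n=(c-f(\alpha))n$, obtained from $H$ by deleting at most $O(\sqrt[4]{\alpha}\,n^3)$ edges in total. The point of the cleaning is that now every edge of $H_2$ is a transversal of three distinct parts $V_i,V_j,V_k$ with $i,j,k\ge1$ forming a regular triple of density at least $d$, and $H_2$ has no edge meeting $V_0$.

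Next I would let $K'$ be the subhypergraph of $K$ on $[t]$ with $ijk\in E(K')$ exactly when $H_2$ has an edge transversal to $V_i,V_j,V_k$; by the last remark this is genuinely a subhypergraph of $K$, since any such edge, being non-bad, lies in a regular triple of density at least $d$. To verify $\delta_2^+(K')\ge(c-f(\alpha))t$, fix a pair $\{i,j\}$ with $d_2^{K'}(i,j)>0$, witnessed by an edge $uvw$ of $H_2$ with $u\in V_i$, $v\in V_j$. Then $\{u,v\}$ has positive co-degree in $H_2$, so $|N_{H_2}(u,v)|\ge(c-f(\alpha))n$; each vertex of this set lies in some $V_k$ with $k\ge1$ (no edge of $H_2$ meets $V_0$) and $k\notin\{i,j\}$ (no edge of $H_2$ has two vertices in a common part), and each such $k$ satisfies $ijk\in E(K')$. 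Since every part has size at most $n/t$, the set $N_{H_2}(u,v)$ meets at least $(c-f(\alpha))t$ parts, whence $d_2^{K'}(i,j)\ge(c-f(\alpha))t$.

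For the isolated-vertex bound, observe that a cluster $V_i$ is isolated in $K'$ precisely when every vertex of $V_i$ is isolated in $H_2$. Let $Z$ be the set of vertices isolated in $H_2$ but not in $H$. The crucial observation is that a non-isolated vertex $w$ of $H$ has \emph{quadratically} many incident edges: the link graph $L_H(w)$ has minimum positive degree at least $cn$ by the co-degree hypothesis, so it has more than $cn$ non-isolated vertices and hence $d_1^H(w)=|E(L_H(w))|>c^2n^2/2$. Since every edge through a vertex of $Z$ is deleted in passing to $H_2$ and each deleted edge is counted at most three times, $|Z|\cdot c^2n^2/2<\sum_{w\in Z}d_1^H(w)\le 3\cdot O(\sqrt[4]{\alpha}\,n^3)$, giving $|Z|=O(\sqrt[4]{\alpha}\,n/c^2)$. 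Writing $A$ for the set of non-isolated vertices of $H_2$, we have $A\cap V_0=\varnothing$, so $A$ meets at least $|A|t/n$ clusters; therefore the number of clusters isolated in $K'$ is at most $t-|A|t/n=(n-|A|)t/n=(g(n)+|Z|)t/n\le\big(g(n)/n+288\sqrt[4]{\alpha}/c^2\big)t$.

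The main obstacle is the isolated-vertex count: the cleaning deletes a constant fraction of the $\Theta(n^3)$ edges of $H$, which a priori could isolate $\Theta(n^2)$ vertices and hence ruin a constant fraction of the clusters. What rescues the argument is the quadratic lower bound $d_1^H(w)>c^2n^2/2$ for non-isolated $w$, which caps the number of newly isolated vertices at $O(\sqrt[4]{\alpha}\,n)$ rather than $O(\sqrt[4]{\alpha}\,n^2)$; everything else in Step 3 is then just averaging against cluster sizes. The remaining delicate point is the bookkeeping in Step 1 — controlling the exact number of bad edges tightly enough relative to $|E(H)|$ so that Lemma~\ref{cleanup lemma} may be invoked with the stated loss $f(\alpha)=48\sqrt[4]{\alpha}$ — together with tracking how the $g(n)$ isolated vertices of $H$ distribute among the clusters.
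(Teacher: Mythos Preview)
Your proposal is correct and follows essentially the same route as the paper: delete the four types of bad edges, bound their number by $\alpha n^3$, apply Lemma~\ref{cleanup lemma} to obtain $H_2$, define $K'$ via the transversal edges of $H_2$, and read off $\delta_2^+(K')$ from a single positive-co-degree pair in $H_2$. The only noticeable variation is in the isolated-cluster count: the paper bounds isolated clusters from above using the \emph{lower} bound $|V_i|>(1-\varepsilon)n/t>n/(2t)$ on cluster sizes (an isolated cluster contributes at least $n/(2t)$ isolated vertices), whereas you bound the \emph{non}-isolated clusters from below using the \emph{upper} bound $|V_i|\le n/t$ (the set $A$ of non-isolated vertices, being disjoint from $V_0$, meets at least $|A|t/n$ clusters). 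Your version in fact yields the slightly sharper constant, but otherwise the two arguments are interchangeable.
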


\begin{proof}

We begin by deleting from $H$ all $3$-edges $e$ satisfying one (or more) of the following conditions:

\begin{enumerate}[(1)]

\item $e$ intersects $V_0$;

\item $e$ intersects some $V_i$ more than once;

\item $e$ spans a triple of classes $(V_i,V_j,V_k)$ which is not $\varepsilon$-regular;

\item $e$ spans a triple of classes $(V_i,V_j,V_k)$ with $d(V_i,V_j,V_k) < d$. 

\end{enumerate}

Let $H_1$ be the subhypergraph of $H$ obtained by the described deletions. Observe that vertices $i,j,k$ in $K$ span a $3$-edge if and only if there exists a $3$-edge in $H_1$ which spans the associated classes $V_i,V_j,V_k$. 

We first estimate the number of $3$-edges which must be deleted from $H$ to obtain $H_1$. Since $|V_0| \leq \varepsilon n$, there are at most 
\[\varepsilon n \binom{n}{2} < \varepsilon n^3\]
$3$-edges satisfying condition (1). For a fixed class $V_i$ with $i > 0$, there are at most 
\[\binom{|V_i|}{3} + n \binom{|V_i|}{2} < 2n \binom{n/t}{2} < \frac{n^3}{t^2}\]
$3$-edges which intersect $V_i$ in more than one vertex. There are $t$ classes $V_i$ with $i > 0$, so fewer than $\frac{n^3}{t}$ $3$-edges which satisfy condition (2). There are at most $\varepsilon \binom{t}{3}$ triples $(V_i,V_j,V_k)$ that are not $\varepsilon$-regular, each spanning at most $\left( \frac{n}{t}\right)^3$ $3$-edges. So at most 
\[ \varepsilon \binom{t}{3} \left(\frac{n}{t} \right)^3 < \varepsilon n^3\]
$3$-edges satisfy condition (3). Finally, a triple $(V_i,V_j,V_k)$ with $d(V_i, V_j, V_k) < d$ spans fewer than $d \left( \frac{n}{t} \right)^3$ $3$-edges, so there are at most
\[\binom{t}{3} d \left( \frac{n}{t}\right)^3 < dn^3\]
$3$-edges satisfying condition (4). Thus, in total, we delete fewer than
\[ \left( 2 \varepsilon + \frac{1}{t} + d\right)n^3 = \alpha n^3\]
$3$-edges to obtain $H_1$ from $H$. 

Now, we apply Lemma~\ref{cleanup lemma} to conclude that $H_1$ has a subgraph $H_2$ with $\delta_2^+(H_2) \geq (c - f(\alpha))n$. Now, consider the $3$-graph $K'$ on vertex set $\{1,2, \dots, t\}$, with $ijk \in E(K')$ if and only if $E(H_2)$ contains a $3$-edge spanning clusters $V_i,V_j,V_k$. Observe that $K'$ is a subhypergraph of $K$. We claim that $\delta_2^+(K') \geq (c - f(\alpha))t$. Indeed, suppose $i,j \in V(K')$ have positive co-degree. Then there exists some pair of vertices $v_i \in V_i, v_j \in V_j$ in $H_2$ such that $v_i,v_j$ have positive co-degree. Thus, $d(v_i,v_j) \geq (c - f(\alpha))n$. By the construction of $H_2$, none of these neighbors are in $V_0,V_i$, or $V_j$. All classes $V_k$ with $k \neq 0$ have size $n/t$, so $d(v_i,v_j) \geq (c -f(\alpha))n$ implies that $v_i,v_j$ have positive co-degree with vertices in at least $(c - f(\alpha))t$ classes $V_k$ with $k \not\in \{0,i,j\}$. Thus, in $K'$, vertices $i,j$ have at least $(c - f(\alpha))t$ neighbors. 

Finally, we estimate the number of isolated vertices in $K'$. First, since $\delta_2^+(H) \geq c n$, observe that any vertex of $H$ is either isolated or is contained in at least $\binom{cn + 1}{2}$ $3$-edges of $H$. By Lemma \ref{cleanup lemma}, we obtain $H_2$ by deleting at most $24 \sqrt[4]{\alpha} n^3$ $3$-edges from $H$, so at most 
\[3 \cdot \frac{24 \sqrt[4]{\alpha}n^3}{\binom{cn + 1}{2}} = \frac{144 \sqrt[4]{\alpha}n^3}{(cn + 1)cn} < \frac{144 \sqrt[4]{\alpha}n}{c^2}\]
vertices of $H$ become isolated after the deletions required to obtain $H_2$. Thus, $H_2$ contains at most $g(n) + \frac{144 \sqrt[4]{\alpha}n}{c^2}$ isolated vertices in total. An isolated vertex in $K'$ corresponds to a class $V_i$ in $H_2$ such that every vertex in $V_i$ is isolated. Since each class $V_i$ contains at most $(1-\varepsilon)n/t > n/2t$ vertices (by choice of $\varepsilon$), at most 
\[\left(\frac{g(n)}{n} + \frac{144 \sqrt[4]{\alpha}}{c^2}\right) 2t\]
classes of $H_2$ can consist solely of isolated vertices. Thus, $K'$ contains at most \[\left(\frac{2g(n)}{n} + \frac{288 \sqrt[4]{\alpha}}{c^2}\right) t\] isolated vertices.
\end{proof}

Given $H$ with $\delta_2^+(H) \geq \left(\frac{1}{2} + \varepsilon \right)$, our goal is to find a helpful almost-tiling of a cluster hypergraph $K$ of $H$. Actually, we shall find an almost-tiling of the subhypergraph $K'$ of $K$ given by Lemma~\ref{codeg reg}. Observe that if $H$ has no isolated vertices, then Lemma~\ref{codeg reg} guarantees that $K'$ will have few isolated vertices (since $\alpha$ will be taken very small). In particular,  if we can almost tile the non-isolated vertices of $K'$, then we shall in fact almost tile all of $K$. The utility of this auxiliary almost-tiling of $K$ is shown by the following lemma, proved in~\cite{handissertation}.

\begin{lemma}[Han~\cite{handissertation}]\label{reg path lemma}
Fix $\varepsilon > 0, d > 2 \varepsilon$, and let $m$ be an integer with $m \geq \frac{d}{\varepsilon(d - 2 \varepsilon)}$. Suppose that $(V_1, V_2, V_3)$ is $(\varepsilon, d)$-regular, with $|V_1| = |V_3| = m$ and $|V_2| = 2m$. Then there is a loose path $P$ which spans all but at most $8\varepsilon m/d + 3$ vertices in $V_1 \sqcup V_2 \sqcup V_3$.

\end{lemma}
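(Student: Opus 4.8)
The plan is to build the desired loose path greedily, one edge at a time, exploiting the weak $(\varepsilon,d)$-regularity of $(V_1,V_2,V_3)$ to guarantee that the greedy process only stops once $O(\varepsilon m/d)$ vertices remain. First I would fix the combinatorial shape of the path. In a loose path $v_1v_2v_3, v_3v_4v_5,\dots$, each edge $v_{2j-1}v_{2j}v_{2j+1}$ has a central ("degree-one") vertex $v_{2j}$ and two connector/endpoint vertices $v_{2j-1},v_{2j+1}$. Since $|V_2| = 2m = |V_1|+|V_3|$, the natural target is a path in which every edge meets each $V_i$ exactly once, all central vertices lie in $V_2$, and the connectors alternate between $V_1$ and $V_3$ (so consecutive edges share a connector, alternately from $V_1$ and from $V_3$). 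Such a path with $k$ edges uses $k$ vertices of $V_2$ and $k+1$ vertices of $V_1\cup V_3$, split as evenly as the parity of $k$ allows; taking $k$ as large as possible will leave only a constant number of vertices of $V_1\cup V_3$, and the $V_2$-side is depleted at essentially the same rate, so it too leaves a comparable remainder.

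The heart of the argument is the greedy extension step, for which I would maintain the invariant that the current last vertex $w$ of the path (lying in $V_1$ or $V_3$) is \emph{typical} with respect to the current unused sets: writing $W_i\subseteq V_i$ for the unused vertices and supposing $w\in V_1$, this means $w$ lies in at least $(d-2\varepsilon)|W_2||W_3|$ edges $\{w,b,c\}$ with $b\in W_2$, $c\in W_3$. Weak regularity is precisely what makes this maintainable: for any two subsets of size at least $\varepsilon|V_2|$ and $\varepsilon|V_3|$, all but at most $\varepsilon|V_1|$ vertices of $V_1$ are typical with respect to them, and similarly with the roles of $V_1,V_3$ exchanged. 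Hence, among the many edges through the typical vertex $w$, one can always choose an extension $\{w,b,c\}$ whose new endpoint $c$ is again typical — this time with respect to the pair $(W_2\setminus\{b\},W_1)$ that governs the following step — as long as $(d-2\varepsilon)|W_3|$ still exceeds the number $\varepsilon|V_3|$ of forbidden vertices. The process is continued until the unused sets shrink past this point; since $m\geq \frac{d}{\varepsilon(d-2\varepsilon)}$ guarantees the relevant thresholds exceed a small constant, this happens only once $O(\varepsilon m/d)$ vertices are left in $V_2$ and in $V_1\cup V_3$, with an additional constant (the "$+3$") absorbing the two endpoints and the parity slack in distributing $V_1\cup V_3$ among the connectors. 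Tightening the resulting $O(\varepsilon m/d)+O(1)$ bound to the stated $8\varepsilon m/d+3$ is then a matter of choosing the stopping threshold carefully and bookkeeping the $\varepsilon$-losses.

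The main obstacle, and the only point where real care is needed, is that weak hypergraph regularity does \emph{not} control the link of a single vertex: a "typical" vertex can still have all of its edges concentrated on a small part of $W_2$ or of $W_3$. This is exactly why the invariant must be stated against the \emph{current} unused sets rather than against $V_1,V_2,V_3$, and why at each step the new endpoint must be selected so as to be typical for the \emph{next} step — a naive greedy that merely extends the path can strand itself with an unusable endpoint while the unused sets are still large. The secondary (routine) difficulty is matching the exact constant $8\varepsilon m/d+3$, which requires balancing the stopping threshold against the $\varepsilon$-error in the density estimates. As an alternative packaging, one can instead pass to the auxiliary bipartite "shadow" graph on $V_1\cup V_3$, in which every pair of subsets of size at least $\varepsilon m$ spans density at least $d-\varepsilon$ (since each large rectangle of pairs receives at least $(d-\varepsilon)|V_2|$ times as many triples as pairs), deduce that it contains an alternating path covering all but $O(\varepsilon m/d)$ vertices, and then assign pairwise distinct central vertices from $V_2$ to its edges via a Hall-type argument; this conveys the same content but must separately dispose of pairs in $V_1\times V_3$ with too few common $V_2$-neighbors.
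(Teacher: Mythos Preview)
The paper does not supply its own proof of this lemma: it is quoted from Han's dissertation and used as a black box in the proof of the path-tiling lemma, so there is no in-paper argument to compare against.

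That said, your sketch is the standard argument and is sound. The path shape (all middle vertices in $V_2$, connectors alternating between $V_1$ and $V_3$) is the right one, and the dynamic typicality invariant is exactly what is needed to sidestep the failure of weak regularity to control single-vertex links---you have correctly identified both the obstacle and its resolution. Your stopping analysis is also right: the three unused sets deplete in lockstep (one vertex of $V_2$ and one of $V_1\cup V_3$ per edge, the latter alternating), so the greedy process runs until each $W_i$ shrinks to roughly $\varepsilon m/(d-\varepsilon)$, leaving $O(\varepsilon m/d)$ vertices in total plus a constant for endpoints and parity. One small quibble: your threshold $(d-2\varepsilon)$ is a hair looser than needed; with $(d-\varepsilon)$ the final count $4\varepsilon m/(d-\varepsilon)\le 8\varepsilon m/d$ follows directly from the hypothesis $d>2\varepsilon$, which is presumably why that hypothesis and the lower bound on $m$ take the form they do.
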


Using Lemma~\ref{reg path lemma}, we will be able to find our desired almost path-tiling in $H$ so long as we can partition $V(H)$ into regular triples of the desired sizes. We shall describe how to find this partition using an auxiliary almost-tiling of $K$ by a useful $3$-graph; we then ``lift'' the tiles in $K$ to indicate disjoint regular triples in $H$. Denote by $C_4^3$ the (unique) $3$-graph on four vertices and two $3$-edges. The following lemma will imply that $K$ admits an $\alpha$-deficient $C_4^3$-tiling for any $\alpha > 0$, so long as $|V(K)|$ is sufficiently large.

\begin{lemma}\label{C tiling}

Fix $\varepsilon > 0$, $\alpha > 0$, and let $n$ be sufficiently large. Let $H$ be an $n$-vertex $3$-graph with $\delta_2^+(H) \geq \left( \frac{1}{2} + \varepsilon \right)n$ and no isolated vertices.
Then $H$ contains an $\alpha$-deficient $C_4^3$-tiling.

\end{lemma}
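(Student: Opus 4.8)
The plan is to bootstrap the positive co-degree hypothesis into a \emph{minimum vertex degree} hypothesis and then run the tiling machinery that underlies the minimum-degree loose Hamilton cycle results. First I would record the elementary but crucial observation that $H$ has large minimum vertex degree. Fix any $v\in V(H)$. Since $v$ is not isolated, $v$ lies in an edge, so some pair $\{v,w\}$ has $d_2(v,w)\ge\left(\frac12+\varepsilon\right)n$. Look at the link graph $L(v)$: by the positive co-degree condition applied to pairs through $v$, every vertex of $L(v)$ has degree $0$ or at least $\left(\frac12+\varepsilon\right)n$, and all its edges lie inside the set $T$ of non-isolated vertices of $L(v)$, which has $|T|\ge\left(\frac12+\varepsilon\right)n$ and on which $L(v)$ has minimum degree at least $\left(\frac12+\varepsilon\right)n>|T|/2$. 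Hence $d_1(v)=e(L(v))\ge\tfrac12\left(\frac12+\varepsilon\right)^2n^2$, which exceeds $\left(\frac14+\varepsilon\right)\binom n2$ once $n$ is large. Thus $H$ satisfies exactly the minimum vertex degree bound under which Buß, Han, and Schacht work.

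From here there are two routes. The short route is to invoke directly the corresponding almost-tiling statement of Buß, Han, and Schacht~\cite{BHScycle}: a $3$-graph with $\delta_1\ge\left(\frac14+\gamma\right)\binom n2$ and $n$ large contains vertex-disjoint copies of $C_4^3$ covering all but at most $\gamma'n$ vertices, applied here with $\gamma'\le\alpha$. Alternatively, one can argue from scratch. Take a $C_4^3$-tiling $\mathcal{T}$ whose uncovered set $R$ has minimum size, and suppose for contradiction $|R|>\alpha n$. By maximality $R$ contains no copy of $C_4^3$, i.e.\ no pair inside $R$ has two common neighbours inside $R$; combined with non-isolatedness of the vertices of $R$ this forces $|R|\le\left(\frac12-\varepsilon\right)n+O(1)$, hence $|\mathcal{T}|\ge\left(\frac18+\frac\varepsilon4\right)n-O(1)$. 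Pick a pair $\{a,b\}\subseteq R$ with $d_2(a,b)>0$ (if $R$ is strongly independent, use instead a pair $\{a,x\}$ with $a\in R$, $x\in V(\mathcal{T})$, $d_2(a,x)>0$, which exists since $a$ is not isolated). At least $\left(\frac12+\varepsilon\right)n-O(1)$ vertices of $N(a,b)$ lie in $V(\mathcal{T})$, so by averaging $\Omega(\varepsilon n)$ tiles meet $N(a,b)$ in at least three vertices. From any such tile one can ``steal'' a pair of its vertices lying in $N(a,b)$ while leaving behind a pair of the tile that still has positive co-degree — the only pair of a $C_4^3$ whose co-degree is not forced positive is the one opposite its shared pair, and stealing the shared pair can always be avoided. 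Iterating ``steal a pair into the new $C_4^3$, carry along the leftover pair'' builds an alternating chain of tiles; a counting argument shows that before the chain consumes a $\left(\tfrac\varepsilon2+\tfrac{|R|}{4}\right)$-fraction of the tiles it must close off by absorbing two further vertices of $R$, producing a tiling with one more $C_4^3$ than $\mathcal{T}$ and contradicting minimality of $|R|$.

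The main obstacle differs by route. On the short route essentially all the content sits in the degree reduction of the first paragraph, together with pinning down the precise form of the Buß–Han–Schacht tiling lemma. On the self-contained route the delicate point is ensuring the alternating chain actually terminates by closing into $R$ rather than running on: this is exactly where the quantitative slack between $|\mathcal{T}|\ge\left(\frac18+\frac\varepsilon4\right)n-O(1)$ and $|R|>\alpha n$ is needed, and one must be careful about which of the several admissible leftover pairs to propagate at each step (or permit limited backtracking). Handling the case in which $R$ is (nearly) strongly independent, so that no base pair lives inside $R$, is a minor but necessary separate case.
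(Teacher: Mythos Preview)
Your short route does not work. The degree reduction you give is correct and tight --- from $\delta_2^+(H)\ge(\tfrac12+\varepsilon)n$ you get only $\delta_1(H)\ge(\tfrac14+\varepsilon')\binom{n}{2}$, and no better (witness $H_{U,V}^3$ with $|U|\approx\tfrac n2$). But $(\tfrac14+\gamma)\binom n2$ is \emph{not} the minimum vertex degree threshold for an almost-perfect $C_4^3$-tiling, and no such lemma appears in Bu\ss--H\`an--Schacht: their path-tiling runs at the $\tfrac7{16}$ threshold, which is the correct one. Concretely, take $V=A\cup B$ with $|A|=0.15n$ and let $E$ be all triples meeting $A$. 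Then $\delta_1\approx(2\cdot0.15-0.15^2)\binom n2>0.27\binom n2>(\tfrac14+\gamma)\binom n2$, yet every $C_4^3$ uses a vertex of $A$, so any partial tiling covers at most $4|A|=0.6n$ vertices and leaves $0.4n$ uncovered. The reduction throws away exactly the information that rules out this space barrier (note this example has $\delta_2^+=0.15n$), so you must use the positive co-degree hypothesis directly.

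Your self-contained route is not a proof as written. The chain you describe replaces $k$ tiles with $k$ new tiles --- it makes no net progress until it ``closes off'' into $R$, and you give no reason why any leftover pair $\{z_i,w_i\}$ should ever have two neighbours in $R$. Your own inequality $i<\tfrac{\varepsilon n}{2}+\tfrac{|R|}{4}$ only says how long the chain can be \emph{continued}, not that it ever terminates profitably; there is no contradiction in each $N(z_i,w_i)$ meeting $R$ in at most one vertex while the chain runs. The ``counting argument'' you allude to is the whole content of the lemma and is missing.

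The paper's proof is structurally different. It shows (i) if the uncovered set $S$ contains three pairwise disjoint co-degree cherries then an averaging argument locates a single tile $C$ with $\sum_i f(e_i,C)\ge 13$, and case analysis produces two disjoint switching configurations that enlarge the tiling; and (ii) if $S$ lacks three such cherries, then $S$ has $O(n)$ co-degree edges, and a second counting argument finds three disjoint pairs of tiles $C_i,C_j$ joined by at least five ``high $S$-degree'' pairs, which one swaps to manufacture the missing cherries without shrinking the tiling. Both steps use the full strength of $\delta_2^+\ge(\tfrac12+\varepsilon)n$ on specific pairs, not just a vertex-degree consequence.
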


\begin{proof}
Let $\mathcal{C}$ be a largest $C_4^3$-tiling of $H$. Let $T$ be the set of vertices which are covered by $\mathcal{C}$, and $S$ the set of uncovered vertices. If $|S| \leq \alpha n$, we are done, so suppose for a contradiction that $|S| > \alpha n$. Our goal is to exhibit a partial $C_4^3$-tiling $\mathcal{C}'$ of $H$ with $|\mathcal{C}'| > |\mathcal{C}|$. Towards this aim, we begin by investigating the structure of $S$. Our first goal is to argue that $S$ contains very few positive co-degree pairs. We investigate \textit{co-degree cherries}, which are cherries in the shadow graph of $H$.

Consider the two configurations illustrated in Figure \ref{cherryconfigs}, which we shall term \textit{switching configurations}. In each instance, we take a co-degree cherry in $S$ and a copy $C \in \mathcal{C}$ of $C_4^3$. Either illustrated configuration allows us to ``switch'' $C$ for a new $C_4^3$ copy which uses one or two vertices from $C$ and two or three vertices from $S$. The described switch creates a new partial $C_4^3$-tiling of equal size to $\mathcal{C}$, which is not itself a contradiction. However, if a single $C \in \mathcal{C}$ admits two switching configurations which are vertex disjoint, then we can replace $C$ with two new copies of $C_4^3$ to obtain a larger partial $C_4^3$-tiling than $\mathcal{C}$. 

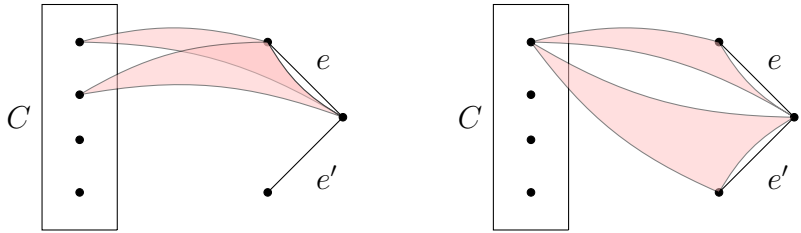
\begin{figure}[h]
\centering

\begin{tikzpicture}
    \filldraw (0,0) circle (0.05 cm);
    \filldraw (-1,1) circle (0.05 cm);
    \filldraw (-1,-1) circle (0.05 cm);
    \draw (0,0) -- (-1,1) node[pos = 0.5, above right]{$e$};
    \draw (0,0) -- (-1,-1) node[pos = 0.5, below right]{$e'$};

    \draw (-3,-1.5) -- (-4,-1.5) -- (-4, 1.5) -- (-3, 1.5) -- (-3,-1.5);
    \draw (-4,0) node[left]{$C$};
    \filldraw (-3.5, -1) circle (0.05 cm);
    \filldraw (-3.5, 1) circle (0.05 cm);
    \filldraw (-3.5, -0.3) circle (0.05 cm);
    \filldraw (-3.5, 0.3) circle (0.05 cm);

    \draw[fill=pink,opacity=0.5]
(-1,1) to[bend right=15] (0,0) to[bend right=15] (-3.5,1) to[bend left=15] cycle
;
    \draw[fill=pink,opacity=0.5]
(-1,1) to[bend right=15] (0,0) to[bend right=15] (-3.5,0.3) to[bend left=15] cycle
;

    \filldraw (6,0) circle (0.05 cm);
    \filldraw (5,1) circle (0.05 cm);
    \filldraw (5,-1) circle (0.05 cm);
    \draw (6,0) -- (5,1) node[pos = 0.5, above right]{$e$};
    \draw (6,0) -- (5,-1) node[pos = 0.5, below right]{$e'$};

    \draw (3,-1.5) -- (2,-1.5) -- (2, 1.5) -- (3, 1.5) -- (3,-1.5);
    \draw (2,0) node[left]{$C$};
    \filldraw (2.5, -1) circle (0.05 cm);
    \filldraw (2.5, 1) circle (0.05 cm);
    \filldraw (2.5, -0.3) circle (0.05 cm);
    \filldraw (2.5, 0.3) circle (0.05 cm);

    \draw[fill=pink,opacity=0.5]
(5,1) to[bend right=15] (6,0) to[bend right=15] (2.5,1) to[bend left=15] cycle
;

    \draw[fill=pink,opacity=0.5]
(5,-1) to[bend left=15] (6,0) to[bend left=15] (2.5,1) to[bend right=15] cycle
;

\end{tikzpicture}

\caption{Two switching configurations}\label{cherryconfigs}

\end{figure}

Using this idea, we can show that $S$ must contain $O(n)$ positive co-degree pairs. 

\begin{claim}
If $S$ contains three pairwise disjoint co-degree cherries, then $\mathcal{C}$ is not maximal.

\end{claim}

\begin{proof}[Proof of Claim.]

Suppose for the sake of a contradiction that $S$ contains three pairwise disjoint co-degree cherries. For each edge $e_i$ contained in these cherries, and each $C_j \in \mathcal{C}$, let $f(e_i, C_j)$ be the number of vertices in $C_i$ contained in the co-degree neighborhood of $e_i$'s endpoints. Observe, for any positive co-degree pair $x,y \in S$, we have $|N(x,y) \cap S| \leq 1$, else $S$ contains a copy of $C_4^3$ and we can immediately extend $\mathcal{C}$. Thus, for each of the $e_i$'s, we have 
\[ \sum_{j} f(e_i, C_j) \geq \left( \frac{1}{2} + \varepsilon \right)n - 1,\]
and so 
\[ \sum_{i,j} f(e_i, C_j) \geq \left( 3 + 6\varepsilon \right)n - 6 \geq \left( 3 + \varepsilon \right)n,\]
using that $n$ is sufficiently large.
Since $\mathcal{C}$ contains $\frac{|T|}{4} \leq \frac{(1 - \alpha)n}{4}$ copies of $C_4^3$, by averaging we have that some copy $C$ of $C_4^3$ in $T$ has
\[\sum_{i} f(e_i, C) \geq \frac{(3 + \varepsilon)n}{\frac{(1 - \alpha)n}{4}} = \frac{4(3 + \varepsilon)}{1 - \alpha} > 12.\]

Now, choose $C$ with $\sum_{i} f(e_i, C) \geq 13$. We shall use the cherries and this $C$ to form two disjoint $C_4^3$ copies. Since some case analysis is required to find these copies, we begin by labeling vertices and co-degree edges. Say the vertices in $C$ are $v_1, v_2, v_3, v_4$. Since we will not use the internal structure of $C$ at all, it does not matter which of the $v_j$'s are spanned by two $3$-edges of $C$, and which by one. We shall label the co-degree edges in the first cherry as $e_1, e_2$, the edges in the second as $e_3, e_4$, and the edges in the third as $e_5,e_6$. For ease of reference, we illustrate the situation in Figure \ref{baseconfig}.

\begin{figure}
    \centering
\begin{tikzpicture}

\draw (0,-3) -- (4,-3) -- (4,-2) -- (0,-2) -- (0,-3);

\draw (0,-2.5) node[left]{$C$};

\filldraw (0.5,-2.5) circle (0.05 cm);  
\filldraw (1.5,-2.5) circle (0.05 cm);  
\filldraw (2.5,-2.5) circle (0.05 cm);  
\filldraw (3.5,-2.5) circle (0.05 cm); 

\draw (0.5, -2.5) node[below]{$v_1$};
\draw (1.5, -2.5) node[below]{$v_2$};
\draw (2.5, -2.5) node[below]{$v_3$};
\draw (3.5, -2.5) node[below]{$v_4$};

\draw (-0.5,-1) -- (0, 0.5) -- (0.5,-1);

\draw (-0.5,0) node{$e_1$};
\draw (0.5,0) node{$e_2$};

\draw (1.5,-1) -- (2, 0.5) -- (2.5,-1);

\draw (1.5,0) node{$e_3$};
\draw (2.5,0) node{$e_4$};

\draw (3.5,-1) -- (4, 0.5) -- (4.5,-1);
\draw (3.5,0) node{$e_5$};
\draw (4.5,0) node{$e_6$};

\filldraw (-0.5,-1) circle (0.05 cm);
\filldraw (0.5,-1) circle (0.05 cm);
\filldraw (0, 0.5) circle (0.05 cm);

\filldraw (1.5,-1) circle (0.05 cm);
\filldraw (2.5,-1) circle (0.05 cm);
\filldraw (2, 0.5) circle (0.05 cm);

\filldraw (3.5,-1) circle (0.05 cm);
\filldraw (4.5,-1) circle (0.05 cm);
\filldraw (4, 0.5) circle (0.05 cm);

\end{tikzpicture}
    \caption{Three co-degree cherries in $S$ and $C \in \mathcal{C}$} \label{baseconfig}
   
\end{figure}
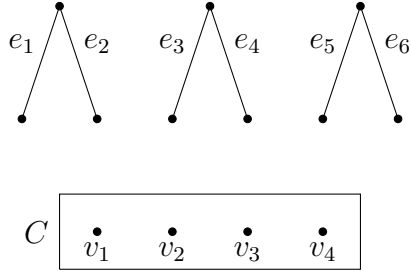

For ease of notation and terminology, we shall perform our analysis on the auxiliary labeled bipartite graph $G(V,U)$, where $V = \{v_1,\dots,v_4\}$, $U = \{e_1,\dots,e_6\}$, and $e_iv_j$ is an edge of $G$ if $v_j$ and the endpoints of $e_i$ span a $3$-edge of $H$. Thus, $\sum_i d(e_i) = \sum_{i} f(e_i, C) \geq 13$. 

We first observe that if some vertex in $U$ has degree $4$, then $\mathcal{C}$ is not maximal. Indeed, without loss of generality, suppose $d(e_1) = 4$. Since $\sum_i d(e_i) \geq 13$, one of $e_3,e_4,e_5,e_6$ has degree at least $2$. Without loss of generality, assume $v_1,v_2$ are neighbors of $e_3$ in $G$. Since $d(e_1) = 4$, we have $v_3, v_4 \in N(e_1)$. We have thus found two vertex disjoint switching configurations: $e_1$ paired with $v_3, v_4$ and $e_3$ paired with $v_1,v_2$. 

Now, suppose no vertex in $U$ has degree $4$. Consider the sums $d(e_1) + d(e_2)$, $d(e_3) + d(e_4)$, and $d(e_5) + d(e_6)$. Since $\sum_i d(e_i) \geq 13$, one of these sums must be at least $5$. Without loss of generality, $d(e_1) + d(e_2) \geq 5$ and $d(e_1) = 3$. We now consider two cases, determined by the interaction of $N(e_1)$ and $N(e_2)$.

\noindent \textbf{Case 1: $e_1$ and $e_2$ have two common neighbors.}

Without loss of generality, say $v_1, v_2 \in N(e_1) \cap N(e_2)$. Now, $d(e_1) + d(e_2) \leq 6$, so either $d(e_3) + d(e_4)$ or $d(e_5) + d(e_6)$ is at least 4. Without loss of generality,  $d(e_3) + d(e_4) \geq 4$, and we may assume $d(e_3) \geq 2$. Now, if $N(e_3) \neq \{v_1, v_2\}$, we can find a pair of vertex disjoint switching configurations as follows: $e_3$ is paired with $\{v_i,v_j\} \in N(e_3)$ such that $\{v_i, v_j\} \neq \{v_1,v_2\}$, and $e_1, e_2$ are paired with $v_k \in \{v_1, v_2\}$ such that $v_k \not\in \{v_i, v_j\}$. Thus, we may assume that $N(e_3) = \{v_1,v_2 \}$. So $d(e_3) = 2$, implying that $d(e_4) = 2$, and we analogously have that $N(e_4) = \{v_1, v_2\}$. We now find two vertex disjoint switching configurations: $e_1, e_2$ are paired with $v_1$ and $e_3, e_4$ are paired with $v_2$. 

In any event, $\mathcal{C}$ is not maximal.

\noindent \textbf{Case 2: $e_1$ and $e_2$ do not have two common neighbors.}

Since $d(e_1) + d(e_2) \geq 5$ and $d(e_1) = 3$, the arrangement is (without loss of generality) as follows: $N(e_1) = \{v_1,v_2,v_3\}$ and $N(e_2) = \{v_3,v_4\}$. Now, consider $e_i$ with $i \geq 3$. Observe, if $\{v_1, v_2\} \subseteq N(e_i)$, then we can find two vertex disjoint switching configurations: $e_i$ is paired with $v_1, v_2$ and $e_1,e_2$ are paired with $v_3$. Furthermore, if $j \neq 4$ and $\{v_j, v_4\} \subseteq N(e_i)$, then we can find two vertex disjoint switching configurations: $e_i$ is paired with $\{v_j, v_4\}$, and $e_1$ is paired with $\{v_1,v_2,v_3\}\setminus \{v_j\}$. Thus, we may assume that each $e_i$ with $i \geq 3$ has degree at most $2$; moreover, if its degree is exactly $2$, then $N(e_i)$ is either equal to $\{v_1,v_3\}$ or $\{v_2,v_3\}$. Since $d(e_1) + d(e_2) = 5$ and $\sum_i d(e_i) \geq 13$, every $e_i$ with $i \geq 3$ must have $d(e_i) = 2$. In particular, $v_3 \in N(e_i)$ for every $e_i$. We can now find two vertex disjoint switching configurations: $e_3$ and $e_4$ are paired with $v_3$, and $e_1$ is paired with $v_1, v_2$.  

In any event, $\mathcal{C}$ is not maximal.
\end{proof}

The following claim will now suffice to show that $\mathcal{C}$ is not maximal. 
\begin{claim}
Suppose $S$ does not contain three pairwise disjoint co-degree cherries. Then there exists a partial $C_4^3$-tiling $\mathcal{C'}$ of $H$ with the following properties:
\begin{enumerate}
\item $|\mathcal{C}| = |\mathcal{C}'|$;

\item Among the vertices left uncovered by $\mathcal{C'}$, there exist three pairwise disjoint co-degree cherries.

\end{enumerate}

\end{claim}

\begin{proof}[Proof of Claim]

Suppose $x,y \in T$ have at least $2$ neighbors in $S$, say $u,v$. Then $x,y,u,v$ span a copy of $C_4^3$. Our strategy is to use the sparsity of $S$ to find pairs of vertices in $T$ with many co-degree neighbors in $S$. These pairs will indicate copies of $C_4^3$ which we will ``switch'' with copies in $\mathcal{C}$ in order to create a new partial tiling $\mathcal{C'}$. To ensure that $\mathcal{C'}$ satisfies the conditions of the claim, the pairs which we select from $T$ must lie in a sufficiently nice configuration. We shall again use averaging to guarantee such a configuration. We start by estimating the number of pairs in $T$ which have a useful number of co-degree neighbors in $S$.

We shall say that a pair of vertices $x, y \in T$ is a \textit{high S-degree pair} if $|N(x,y) \cap S| \geq 18$. We say that $x \in T$ is a \textit{high S-degree vertex} if $|N(x) \cap S| \geq 18$. Observe that if $x,y$ is a high $S$-degree pair, then both $x$ and $y$ are high $S$-degree vertices. Our goal is to find $C, C' \in \mathcal{C}$ which span many high $S$-degree pairs. 

Since $S$ does not contain three pairwise disjoint co-degree cherries, we know that $S$ must be quite sparse in co-degree edges. Specifically, we observe that there are at most $8$ vertices in $S$ which have positive co-degree with more than $7$ vertices in $S$.
Thus, $S$ contains at most $\frac{1}{2} (8n + 7n) = \frac{15}{2} n$ co-degree edges. Now,

\[ \sum_{x \in C_i, y \in C_j : i \neq j} |N(x,y) \cap S|\]
is equal to the number of $3$-edges $x,y,z$ such that $x,y$ are in distinct copies of $C_4^3$ in $\mathcal{C}$ and $z \in S$. Observe, the only other edges which intersect $S$ must either use a positive co-degree pair in $S$, or use a positive co-degree pair spanned by some copy of $C_4^3$ in $\mathcal{C}$. There are at most $\frac{15}{2}n$ positive co-degree pairs in $S$, and at most $6|\mathcal{C}| \leq \frac{3}{2} n$  positive co-degree pairs spanned by the copies of $C_4^3$ in $\mathcal{C}$, so the above sum counts all but at most $9n^2$ $3$-edges which intersect $S$. Now, since $H$ contains no isolated vertices, we know that each vertex in $S$ is in at least $\left(\frac{1}{2} + \varepsilon \right)n$ positive co-degree pairs, and thus in at least $\binom{\left(\frac{1}{2} + \varepsilon \right)n}{2} = \left( \frac{1}{4} + \varepsilon' \right) \binom{n}{2}$ $3$-edges, for some $\varepsilon'>0$. Summing vertex degrees in $S$ double-counts only those $3$-edges which intersect $S$ in at least two vertices, of which there are at most $\frac{15}{2}n^2$. So

\[ \sum_{x \in C_i, y \in C_j : i \neq j} |N(x,y) \cap S| \geq \left( \frac{1}{4} + \varepsilon' \right) \binom{n}{2}|S| - \frac{33}{2}n^2.\]
Since $|S| \geq \alpha n$ and thus $\varepsilon' \binom{n}{2}|S| \geq 33 n^2$ for $n$ large enough, we have
\[\left( \frac{1}{4} + \varepsilon' \right) \binom{n}{2}|S| - \frac{33}{2}n^2 \geq \left( \frac{1}{4} + \varepsilon'' \right)\binom{n}{2}|S|\]
for some $\varepsilon'' > 0$.
Now, we wish to find the average number of high $S$-degree pairs between two copies $C_i, C_j$ of $C_4^3$ in $\mathcal{C}$. Let $h(C_i,C_j)$ denote the number of pairs $x \in V(C_i), y \in V(C_j)$ with $|N(x,y) \cap S| \geq 18$. Observe that 
\[\sum_{x \in C_i, y \in C_j} |N(x,y) \cap S| \leq h(C_i,C_j)|S| + (16 - h(C_i,C_j))17.\]
Thus,
\[\sum_{x \in C_i, y \in C_j : i \neq j} |N(x,y) \cap S| \leq \sum_{C_i \neq C_j} \left(  h(C_i, C_j)|S| + (16 - h(C_i,C_j))17 \right)  \]

Now, there are $\binom{|\mathcal{C}|}{2}$ pairs $C_i \neq C_j$. Since $|\mathcal{C}| = |T/4| \leq \frac{(1-\alpha)n}{4}$, this implies that the above sum is over at most 
\[ \left( \frac{1}{16} - \alpha' \right) \binom{n}{2}\]
pairs for some $\alpha' > 0$. Setting $\overline{h}$ equal to the average value of $h(C_i,C_j)$ and combining our bounds on 
\[\sum_{x \in C_i, y \in C_j : i \neq j} |N(x,y) \cap S|,\] we have

\[ \left( \frac{1}{4} + \varepsilon'' \right)\binom{n}{2}|S| \leq \overline{h}|S|\left(\frac{1}{16} - \alpha' \right) \binom{n}{2}  + 272 \left(\frac{1}{16} - \alpha' \right) \binom{n}{2}.\]
Now, dividing through by $\binom{n}{2} |S|$, using the fact that $|S| \geq \alpha n$, we have 
\[ \left( \frac{1}{4} + \varepsilon'' \right) \leq \overline{h}\left(\frac{1}{16} - \alpha' \right) + \frac{272 \left(\frac{1}{16} - \alpha' \right)}{\alpha n}\]
So, for some $0 < \varepsilon'''<\varepsilon''$, and $n$ large enough,
\[\left(\frac{1}{4} + \varepsilon''' \right) \leq \overline{h} \left( \frac{1}{16} - \alpha' \right),\]
i.e.
\[\frac{\left(\frac{1}{4} + \varepsilon''' \right)}{\left( \frac{1}{16} - \alpha' \right)} \leq \overline{h}.\]
We claim that this implies that there are three mutually disjoint pairs of $C_4^3$ copies with $h$ value at least 5. Indeed, consider the auxiliary graph $G$ whose vertices are the $C_4^3$ copies of $\mathcal{C}$, with edges connecting two copies if their $h$ value is at least $5$. Either we can find the desired mutually disjoint pairs, or $G$ does not contain a matching of size $3$. So by a theorem of Erd\H{o}s and Gallai \cite{erdosmatching}, $G$ contains at most $\binom{2}{2} + 2(|\mathcal{C}| - 1) < 2 |\mathcal{C}|$ edges (if $2|\mathcal{C}| \geq 10$, which we may assume). Thus,
\[\sum_{C_i \neq C_j} h(C_i,C_j) \leq 16 (2 |\mathcal{C}|) + 4 \binom{|\mathcal{C}|}{2}, \]
and so
\[\overline{h} \leq 32 \frac{|\mathcal{C}|}{\binom{|\mathcal{C}|}{2}} + 4  = 4 + \frac{64}{|C|-1}.\]
Note that if $|\mathcal{C}|$ grows with $n$, then for any $\beta >0$, $\overline{h} < 4 + \beta$ when $n$ is sufficiently large. (In fact, it is easy to see that $|\mathcal{C}|$ must be at least $\left(\frac{1}{2} + \varepsilon \right)\frac{n}{4} - \frac{1}{4}$, since otherwise there exists a positive co-degree pair $x,y$ outside of $\mathcal{C}$ and $|N(x,y)| \geq 4|\mathcal{C}| + 2$.) Thus, since 
\[4 < \frac{\left(\frac{1}{4} + \varepsilon''' \right)}{\left( \frac{1}{16} - \alpha' \right)},\]
we have 
\[\overline{h} < \frac{\left(\frac{1}{4} + \varepsilon''' \right)}{\left( \frac{1}{16} - \alpha' \right)}\]
for $n$ large enough, a contradiction.

Thus, without loss of generality, we have $h(C_1,C_2) \geq 5, h(C_3,C_4) \geq 5$, and $h(C_5,C_6) \geq 5$. Thus, in each of the three pairs, we can find the configuration depicted in Figure~\ref{tile matching}: a matching of size 2 between $C_i$ and $C_j$ indicating two high $S$-degree pairs, and a fifth vertex in one of $C_i, C_j$ which is in a high $S$-degree pair.

\begin{figure}[h]
\centering 

\begin{tikzpicture}

\draw (0,0) -- (4,0) -- (4,1) -- (0,1) -- (0,0);

\draw (0,0.5) node[left]{$C_i$};

\draw (0,-2) -- (4,-2) -- (4,-1) -- (0,-1) -- (0,-2);

\draw (0,-1.5) node[left]{$C_j$};   

\draw[red] (0.5,0.5) -- (0.5,-1.5);

\draw[red] (1.5,0.5) -- (1.5, -1.5);

\draw[dotted, thick, red] (2.5,0.5) circle (0.3 cm);

\filldraw (0.5,0.5) circle (0.05 cm);  
\filldraw (1.5,0.5) circle (0.05 cm);  
\filldraw (2.5,0.5) circle (0.05 cm);  
\filldraw (3.5,0.5) circle (0.05 cm);  

\filldraw (0.5,-1.5) circle (0.05 cm);  
\filldraw (1.5,-1.5) circle (0.05 cm);  
\filldraw (2.5,-1.5) circle (0.05 cm);  
\filldraw (3.5,-1.5) circle (0.05 cm);

\end{tikzpicture}
\caption{High $S$-degree pairs between $C_i$ and $C_j$; the circled vertex in $C_i$ is also a member of some high $S$-degree pair}
\label{tile matching}

\end{figure}
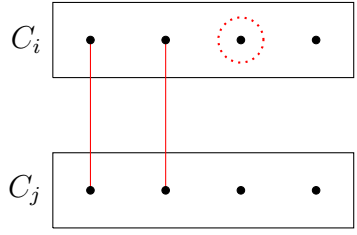

Each matching of size 2 indicates two pairs of vertices which can extend to several $C_4^3$ copies with both vertices in $S$. We first extend all matching edges to pairwise disjoint $C_4^3$ copies with two vertices in $S$. We now form a new partial tiling $\mathcal{C}'$ which contains $\mathcal{C} \setminus \{C_1,\dots,C_6\}$ and these six new copies of $C_4^3$. Let $T'$ be the vertices covered by $\mathcal{C}'$, and $S'$ the set of uncovered vertices. Observe, $S'$ now contains three vertices, say $v_1, v_2, v_3$, which were high $S$-degree vertices in $T$. Since $12$ vertices of $S$ have been moved to $T'$, each of $v_1, v_2, v_3$ thus has positive co-degree with at least $6$ vertices in $S \cap S'$. We can thus form three pairwise disjoint co-degree cherries in $S'$ with centers $v_1, v_2, v_3$.
\end{proof}

With the above claim established, we are done: either the set $S$ uncovered by our original partial tiling $\mathcal{C}$ contains three co-degree cherries, or else we can apply the claim to shift to an equally large partial tiling $\mathcal{C}'$ with an uncovered set $S'$ containing three co-degree cherries. In either case, the presence of three co-degree cherries in the uncovered set guarantees that we can extend to a strictly larger partial tiling, contradicting the maximality of $\mathcal{C}$.
\end{proof}

For the sake of completeness, we formally describe the process by which the above lemmas imply Lemma~\ref{path tiling lemma}.

\begin{proof}[Proof of Lemma~\ref{path tiling lemma}]

Fix $\varepsilon >0, \alpha > 0$. We choose $\varepsilon', t_0, d,\beta,$ and $n_0, T_0$ so that the following hold:

\begin{enumerate}

\item $1152 \sqrt[4]{2\varepsilon' + \frac{1}{t_0} + d} + \beta + \frac{3 \varepsilon'}{d} + \varepsilon' < \min\{\alpha, \varepsilon\}$;

\item $d > 2 \varepsilon'$;

\item $t_0$ is sufficiently large that Lemma~\ref{C tiling} implies that any $3$-graph $H$ on $t$ vertices with $t \geq \left(1 - 1152 \sqrt[4]{2\varepsilon' + \frac{1}{t_0} + d}\right)t_0$ and $\delta_2^+(H) \geq \left(\frac{1}{2} + \varepsilon - 48\sqrt[4]{2\varepsilon' + \frac{1}{t_0} + d}\right)t$ contains a $\beta$-deficient $C_4^3$-tiling;

\item Given $\varepsilon', t_0$, Lemma~\ref{weakreg} holds with $n_0, T_0$;

\item $n_0$ is large enough that $\frac{8 \varepsilon n_0}{2T_0d} + 3 < \frac{6 \varepsilon n_0}{T_0d}$.

\end{enumerate}

Now, let $H$ be a $3$-graph on $n \geq n_0$ vertices with $\delta_2^+(H) \geq \left( \frac{1}{2} + \varepsilon \right)n$. We apply the Weak Regularity Lemma (Lemma~\ref{weakreg}) with $\varepsilon', t_0$ to obtain a partition 
\[V(H) = V_0 \sqcup V_1 \sqcup \dots \sqcup V_t\]
as guaranteed by the lemma, with $t_0 \leq t \leq T_0$. Now, with $d$ as chosen above, consider the cluster hypergraph $K(\varepsilon',d)$ of $H$ associated with this partition. Applying Lemma~\ref{codeg reg}, we obtain a subhypergraph $K'$ with 

\[\delta_2^+(K') \geq \left( \frac{1}{2} + \varepsilon - 48\sqrt[4]{2\varepsilon' + \frac{1}{t} + d}\right)\]
which contains at most $$\left( \frac{1}{\left(\frac{1}{2} + \varepsilon \right)^2} \cdot 288 \sqrt[4]{2\varepsilon' + \frac{1}{t} + d} \right) t < \left(1152 \sqrt[4]{2\varepsilon' + \frac{1}{t} + d}\right) t$$
isolated vertices.
Since we have chosen our constants so that $\varepsilon - 48\sqrt[4]{2\varepsilon' + \frac{1}{t} + d} > 0$ and $t_0$ is sufficiently large, we can apply Lemma~\ref{C tiling} to obtain a $\beta$-deficient $C_4^3$-tiling of the non-isolated vertices of $K'$. 

Now, note that a copy of $C_4^3$ in $K'$, say on vertices $1,2,3,4$ with $3$-edges $123, 234$, indicates four classes $V_1,V_2,V_3,V_4$ in $H$ such that $(V_1,V_2,V_3)$ and $(V_2,V_3,V_4)$ are $(\varepsilon, d)$-regular. We split $V_2, V_3$ into two equally sized classes, $V_2^1, V_2^2$ and $V_3^1,V_3^2$. It follows that $(V_1,V_2^1,V_3^1)$ and $(V_2^2,V_3^2,V_4)$ are $(\varepsilon,d)$-regular. Applying Lemma~\ref{reg path lemma} with $m = \frac{n}{2t}$, $(V_1,V_2^1,V_3^1)$
contains a loose path $P$ which spans all but at most $\frac{8 \varepsilon n}{2td} + 3 < \frac{6 \varepsilon n}{td}$ vertices of $(V_1,V_2^1,V_3^1)$, and analogously for $(V_2^2,V_3^2,V_4)$. Repeating this process across all copies of $C_4^3$ in our tiling of $K'$, we obtain a partial $\mathcal{P}$-tiling $\mathcal{T}$ of $H$ with the following properties:

\begin{enumerate}

\item $|\mathcal{T}| \leq \frac{T_0}{2}$;

\item $\mathcal{T}$ is  
$\left(1152 \sqrt[4]{2\varepsilon' + \frac{1}{t_0} + d} + \beta + \frac{3 \varepsilon'}{d} + \varepsilon' \right)$-deficient.

\end{enumerate}

The first property is clear, as $V(K') \leq T_0$ and thus $K'$ can be tiled with at most $\frac{T_0}{4}$ copies of $C_4^3$, each of which gives rise to two elements of $\mathcal{T}$. For the second property, we estimate the number of vertices omitted by a path-tiling produced as described. By Lemma~\ref{codeg reg}, $K'$ contains at most $\left(1152 \sqrt[4]{2\varepsilon' + \frac{1}{t_0} + d} \right)t$
isolated vertices, corresponding to at most $\left(1152 \sqrt[4]{2\varepsilon' + \frac{1}{t_0} + d} \right)n$ vertices of $H$. Additionally, $K'$ does not contain a vertex corresponding to $V_0$, so all vertices in $V_0$ are uncovered, contributing at most $\varepsilon' n$ omitted vertices. A $\beta$-deficient $C_4^3$-tiling of the non-isolated vertices of $K'$ yields at most $\beta t$ untiled vertices of $K'$ which are not isolated, corresponding to at most $(\beta t) \cdot \frac{n}{t} = \beta n$ vertices. Finally, each element of $\mathcal{T}$ covers all but at most $\frac{6 \varepsilon'n}{td}$ vertices from its corresponding clusters. Since there are at most $\frac{t}{2}$ such elements, they omit at most $\frac{3\varepsilon'n}{d}$ vertices in total from the clusters which correspond to tiled vertices in $K'$. Thus, the total number of vertices not covered by $T$ is at most 
\[\left( 1152 \sqrt[4]{2\varepsilon' + \frac{1}{t_0} + d} + \varepsilon' + \beta+ \frac{3\varepsilon'}{d} \right)n < \alpha n\]
by our choice of constants.
\end{proof}

\subsection{Proof of Theorem~\ref{looseHC}}\label{main thm subsection}

Finally, we are ready to prove Theorem \ref{looseHC}, using the lemmas developed in the previous subsections.

\begin{proof}[Proof of Theorem~\ref{looseHC}]

Fix $0 < \varepsilon < \frac{1}{2}$ .
We apply the Absorbing Lemma (Lemma~\ref{absorbing}) with $\varepsilon$ to obtain $n_1$ and $\beta = \frac{\varepsilon^6}{C}$ for which the lemma holds. We next apply the Reservoir Lemma (Lemma~\ref{reservoir}) with $\gamma = \frac{\beta}{2}$ to obtain $n_2$ such that the lemma holds. We shall set $\varepsilon_1 = \varepsilon - \varepsilon^3$, and $\varepsilon_2 = \varepsilon - \varepsilon^3 - \frac{\beta}{2}$. Note that $\varepsilon_1$ and $\varepsilon_2$ are both strictly positive (as $\beta < \varepsilon^6$). Finally, we apply the Path Tiling Lemma (Lemma~\ref{path tiling lemma}) with $\varepsilon_2$ and $\alpha = \frac{\beta}{2}$ to obtain $n_3, p$ such that the lemma holds. With these constants determined, we proceed as follows.

Let $n_0 \geq \max\{n_1, 2n_2, 2n_3\}$, with $n_0$ also chosen to be large enough that \[\frac{1}{12}\left(\frac{\beta}{2}\right)^3 \left(1 - \varepsilon^3\right)n_0 \geq p + 1.\]
Let $H$ be an $n$-vertex $3$-graph with no isolated vertices, $n \geq n_0$, $n \in 2\mathbb{N}$, and $\delta_2^+(H) \geq \left( 
\frac{1}{2} + \varepsilon \right)n$. Applying the Absorbing Lemma, we obtain a loose path $P$ with $|V(P)| \leq \varepsilon^3 n$ which can absorb any set $U \subset V(H) \setminus V(P)$ with $|U| \in 2\mathbb{N}$ and $|U| \leq \beta n$. Let $H_1$ be the $3$-graph induced on $V(H) \setminus V(P)$. Note that $V(H_1) \geq (1 - \varepsilon^3)n_0 > n_2$ and $\delta_2^+(H_1) \geq \left( \frac{1}{2} + \varepsilon_1 \right)|V(H_1)|$; also note that $\varepsilon_1 > \frac{\beta}{2}$. Thus, we can apply the Reservoir Lemma to obtain a reservoir $R$ of size at most $\frac{\beta}{2}|V(H_1)|$ which is capable of connecting at least 
\[\frac{1}{12}\left(\frac{\beta}{2}\right)^3 V(H_1) \geq \frac{1}{12}\left(\frac{\beta}{2}\right)^3 \left(1 - \varepsilon^3\right)n_0\]
mutually disjoint pairs of vertices. Finally, let $H_2$ be the $3$-graph induced on $V(H_1)\setminus V(R)$. Observe that $V(H_2) \geq \left(1 - \varepsilon^3 - \frac{\beta}{2}\right)n_0 > n_3$ and $\delta_2^+(H_2) \geq \left(\frac{1}{2} + \varepsilon_2 \right)|V(H_2)|$. Thus, we can apply the Path Tiling Lemma to $H_2$ to obtain a $\frac{\beta}{2}$-deficient path-tiling of $H_2$ by at most $p$ loose paths. Now, together with our absorbing path, we have $p+1$ mutually disjoint loose paths, whose endpoints we can connect using $R$ to form a loose cycle. This loose cycle spans all but at most 
\[|R| + \frac{\beta}{2}|V(H_2)| < \frac{\beta}{2}n + \frac{\beta}{2}n = \beta n\] 
vertices, and since any loose cycle contains an even number of vertices, the set of vertices which are not spanned by this loose cycle must also have even cardinality. Thus, $P$ can absorb all uncovered vertices, yielding a loose Hamiltonian cycle in $H$.
\end{proof}

\section{Conclusion}\label{further questions}

As with other hypergraph versions of minimum degree, we are able to obtain Dirac-type results and perfect matching thresholds for minimum positive co-degree. The similarity of Construction~\ref{main construction} to certain constructions which are extremal in the graph case is notable.
The minimum positive co-degree parameter was recently introduced as a ``reasonable notion'' of minimum degree for hypergraphs, and our investigation was partially motivated by the desire to better understand $\delta_{r-1}^+(H)$ as an analog to minimum degree in graphs. In the cases which we investigated, it seems that minimum positive co-degree indeed behaves similarly to minimum vertex degree in $2$-graphs: not only do our positive co-degree thresholds naturally align with minimum degree thresholds for analogous structures in graphs, but the possible obstructions to finding spanning structures in the two settings are similar. 

Many avenues remain open. In the vein of perfect matchings, one could also consider positive co-degree thresholds for various types of $\mathcal{K}$-tiling.  Lemma~\ref{C tiling} demonstrates the potential utility of tiling results in this setting; there are also some types of tiling which would be interesting to study in their own right. Finding thresholds for clique factors seems a natural starting point. 

Theorem~\ref{looseHC} can potentially be improved: Construction~\ref{main construction} achieves a minimum positive co-degree of $\frac{n}{2} - 1$, and we conjecture that $\frac{n}{2}$ is indeed the positive co-degree threshold for loose Hamiltonian cycles in $3$-graphs. A sharpening of the asymptotic threshold to an exact one would be interesting, as would a generalization of Theorem~\ref{looseHC} to $r$-graphs with $r \geq 4$. Other types of Hamiltonian cycle could also be considered. Another related question which may have bearing upon sharpening Theorem~\ref{looseHC} is that of stability: how many $3$-graphs $H$ which avoid a loose Hamiltonian cycle have $\delta_2^+(H)$ close to $\frac{n}{2} - 1$? We know that Construction~\ref{main construction} is not the unique such $3$-graph. Some subgraphs of Construction~\ref{main construction} also achieve minimum positive co-degree $\frac{n}{2} - 1$, and the disjoint union of two copies of $K_{n/2}$ has minimum positive co-degree $\frac{n}{2} - 2$. It is unclear whether these should be the only obstructions to a loose Hamiltonian cycle near minimum positive co-degree $\frac{n}{2}$.

\section{Acknowledgements}

The authors would like to thank the organizers and lecturers of the 2023 Summer School in Graph Coloring at the University of Illinois, Urbana-Champaign for providing an excellent introduction to the absorbing method, which inspired and informed this project. The authors would also like to thank Cory Palmer and Ryan Wood for helpful discussions and feedback on the manuscript.

\printbibliography

\end{document}